\newcommand{\ti}{\tilde}
\renewcommand{\ddt}{\frac{\dd}{\dd t}}
\newcommand{\MRprop}{\mathcal{M{\kern-0.5pt}R}}
\newcommand{\mrprop}{m{\kern-0.5pt}r}
\DeclareMathOperator{\MR}{M{\kern-0.5pt}R}
\title{Quantitative estimates of $L^p$ maximal regularity for nonautonomous operators and global existence for quasilinear equations\footnote{AMS Classifications: Primary: 35K90, 35K59, 35D35 Secondary: 47D06 }}
\author[1,2,3]{Théo Belin}
\author[1,3]{Pauline Lafitte}
\affil[1]{Université Paris-Saclay, Fédération de Mathématiques de CentraleSupélec (FR CNRS 3487)}
\affil[2]{Université Paris-Saclay, Laboratoire MICS, CentraleSupélec}
\affil[3]{Université Paris-Saclay}
\date{}
\begin{document} 

\maketitle
\tableofcontents

\newpage 

{\centering\section*{Abstract}}
In this work, we obtain quantitative estimates of the continuity constant for the $L^p$ maximal regularity of relatively continuous nonautonomous operators $A : I \ra \mcl{L}(D,X)$, where $D \cinj X$ densely and compactly. They allow in particular to establish a new general growth condition for the global existence of strong solutions of Cauchy problems for  nonlocal quasilinear equations for a certain class of nonlinearities $u \mapsto \mathbb{A}(u)$. The estimates obtained rely on the precise asymptotic analysis of the continuity constant with respect to perturbations of the operator of the form $A(\cdot) + \lambda I$ as $\lambda \ra \pm \infty$. A complementary work in preparation supplements this abstract inquiry with an application of these results to nonlocal parabolic equations in
noncylindrical domains depending on the time variable. 

Keywords: Nonautonomous Cauchy problem, $L^p$ maximal regularity, relatively continuous operators, quasilinear equations.

\section{Introduction}
The $L^p$ maximal regularity theory deals with the well-posedness in the strong sense of the following abstract nonautonomous Cauchy problem,
\begin{equation}
\label{eq:nacp_intro}
\left\{
\begin{array}{rclr}
\displaystyle \ddt u(t) + A(t)u(t) &=& f(t) & t\in I = (a,b)\\
u(a) &=& 0
\end{array}
\right.
\end{equation}
The problem is set on a time interval $I = (a,b)\subset \R$ bounded from below, the source term $f$ lies in $L^p(I;X)$ and $A: I \lra \mcl{L}(D(A);X)$ is a nonautonomous linear operator with a uniform-in-time domain $D(A) \subset X$. \\

For a generic source term $f \in L^p(I;X)$, a strong solution of \eqref{eq:nacp_intro}, i.e. a solution $u$ such that $t \mapsto \sbk*{\ddt u}(t), A(t)u(t)$ are elements of $L^p(I;X)$, lies in the Banach space $\MR^p(I):= W^{1,p}(I;X)\cap L^p(I;D(A))$ called the \emph{maximal regularity space}. Given the regularity of the source term ${f \in L^p(I;X)}$, this is the best regularity one can possibly obtain for $u$, giving a precise semantic meaning to the terminology of $L^p$ \emph{maximal regularity}. More precisely if there exists $K \geq 0$ such that for any $f\in L^p(I;X)$ there exists a unique solution $u \in \MR^p(I)$ to \eqref{eq:nacp_intro} and
\begin{equation}
\label{eq:intro_MR_const}
\norm*{u}_{\MR^p(I)} \leq K \norm*{f}_{L^p(I;X)}
\end{equation}
then we say that $A$ is \emph{maximally regular} in $L^p$. \\

It is of interest to study the following questions which have numerous applications, {\em e.g.} to the study of the well-posedness and regularity of solutions of Cauchy problems for general parabolic equations:\\

\begin{enumerate}[label = (Q\arabic*)]
\item \label{item:q1} Under which conditions on $A$ is \eqref{eq:nacp_intro} well-posed with respect to the source term $f \in L^p(I; X)$?
\item \label{item:q2} Can we identify quantities related to the operator $A$ which give fine estimates and precise asymptotic behaviors of the constant $K$ in \eqref{eq:intro_MR_const}?
\item \label{item:q3} In which topologies on the set of nonautonomous operators is the set of maximally regular operators closed? In these topologies, does the continuity of strong solutions in $\MR^p(I)$ hold?
\item \label{item:q4} Can answering \ref{item:q2} and \ref{item:q3}
  allow to infer well-posedness criteria of the Cauchy problem for nonlocal quasilinear equations of the form ${\ddt u(t) + \mathbb{A}(u, t) u = \mathbb{F}(u, t)}$, $u(0) = x$?
\end{enumerate}


Logically, answers to questions \ref{item:q2},  \ref{item:q3}
and \ref{item:q4} must rely on the already existing answers to question \ref{item:q1}, and we hereby give a short summary of the standard results found for $L^p$ maximal regularity regarding \ref{item:q1} for autonomous and nonautonomous operators. \\

For \emph{autonomous operators} $A(t) = A$, \ref{item:q1} has received a lot of attention. Since de Simon \cite{de-simon1964unapplicazione} it is known that
if $X$ is a Hilbert space, the maximal regularity on the unbounded interval $(0,+\infty)$ is equivalent to the sectoriality of the operator $-A$ which is also equivalent to the analyticity of the semi-group generated by $-A$ on a sector of the complex plane. In the setting where $X$ is merely a Banach space, the sectoriality of $-A$ is necessary (see e.g. \cite{monniaux2009maximal}) but not sufficient in general. \\
More recently, in \cite{kunstmann2004maximal}, it has been proved that the $\mcl{R}$-boundedness of the resolvent operator of $A$ yields maximal regularity in Banach spaces $X$ of the $UMD$ class, that is, the Banach spaces for which the Hilbert transform is a bounded operator in $L^q(\R;X)$, for some $1 < q < +\infty$. \\
For a thorough introduction on maximal regularity for autonomous operators we refer the reader to the lecture notes of Kunstmann and Weis \cite{kunstmann2004maximal}.\\

The study of the autonomous case relies on a Fourier transform in the time variable and the study of the regularizing properties of the subsequent kernel. Such a technique logically leads to difficulties in the \emph{nonautonomous} case. Sufficient conditions known in the literature for maximal regularity usually require a certain degree of regularity in time of $A(\cdot)$ as well as the maximal regularity of each $A(t)$, $t\in \ol{I}$.\\
The first notable results were obtained by Acquistapace and Terreni \cite{acquistapace1987a-unified} who have uncovered a Hölder-type condition in time of the operator, which may have a time-dependent domain $D(A(t))$. In the case of a constant domain $D(A)$, which is our interest here, Prüss and Schnaubelt \cite{pruss2001solvability} showed that the strong continuity in time along with the maximal regularity of each $A(t)$ is sufficient to guarantee the maximal regularity of the nonautonomous operator. These inquiries were further refined and developed by Amann \cite{amann2004maximal}, Yagi \cite[Chapter 3]{yagi2009abstract}, Gallarati and Veraar \cite{gallarati2017maximal}. In this direction, Arendt and coauthors \cite{arendt2007lp-maximal} gave a weaker sufficient time-regularity condition called \emph{relative continuity}. \\

Overall, on the subject of nonautonomous maximal regularity, we refer the reader to the reviews by Monniaux \cite{monniaux2009maximal} or most recently by Pyatkov \cite{pyatkov2019on-the-maximal}.\\

We rely on relative continuity assumptions to provide new answers to questions \ref{item:q2}, \ref{item:q3}
and \ref{item:q4}. We introduce the space $RC(I; (D,X))$ of relatively continuous operators and isolate meaningful quantities through which one can estimate the constant $K$. \\

The $L^p$ maximal regularity has shown its wide uses in various settings for the study of strong solutions of very wide classes of parabolic-type equations. Again we refer the reader to the monograph \cite{monniaux2009maximal} and references therein. The framework of maximal regularity is a crucial tool to study the strong solutions of nonlinear equations. Let us mention the recent work of Danchin and coauthors \cite{danchin2023free} who explore the critical setting $p = 1$. It turns out that this $L^1$ setting is natural to study the equivalence of Lagrangian and Eulerian formulations globally in time. Their inquiries lead to well-posedness results in free boundary problems of fluid mechanics.

Answers to \ref{item:q4} can rely on answers to \ref{item:q2}. To fix some ideas, we focus our interest on a class of \emph{nonlocal} quasilinear equations of the form 

\begin{equation}
\label{eq:non_local_quasilinear_intro}
\left\{
\begin{array}{rclr}
\displaystyle \ddt u(t) + \mathbb{A}(u,t)u(t) &=& \mathbb{F}(u,t) & t\in I\\
u(a) &=& x
\end{array}
\right.
\end{equation}
on a bounded time interval $I = (a,b) \subset \R$, $b < +\infty$. The nonlinearities $u \mapsto \mathbb{A}(u)$ and $u \mapsto \mathbb{F}(u)$ are assumed to be defined on $\mcl{X}(I)$, a functional space on $I$ for which the following chain of embeddings holds: 
\begin{equation*}
\MR^p(I) \cinj_c \mcl{X}(I) \cinj L^p(I;X)
\end{equation*}
where the first embedding is compact. As such these nonlinear operators may include nonlocal effects in time. They are sometimes assumed to satisfy a certain \emph{causality} principle translated by the Volterra property (see Amann \cite{amann2005quasilinear}): the restriction of $\mathbb{A}(u)$ and $\mathbb{F}(u)$ to any subinterval of the form $(a,t)$, $t \leq b$ only depends on $u|_{I_t}$. \\

These equations appear in various models of elasticity and solid mechanics with memory and nonlocal effects (see \cite{clement1992global, amann2006quasilinear}, \cite{du2017analysis} and references therein) or population dynamics \cite{kosovalic2013abstract}. They generally model phenomena which have a memory of their past states. Their interest is both found in applications and in theoretical studies since they also appear when regularizing certain singular equations, for instance in the Perona-Malik model of anisotropic diffusion \cite{amann2007time, guidotti2009new, guidotti2015anisotropic}. \\

The well-posedness of these Cauchy problems in an abstract framework has been studied by Amann (see \cite{amann1995linear, amann2004maximal, amann2005quasilinear, amann2006quasilinear}). His most important result in \cite{amann2005quasilinear} shows that if $\mcl{X}(I) = \MR^p(I)$ and under the Lipschitz continuity of $u \mapsto \mathbb{A}(u)$ in $L^\infty(I;\mcl{L}(D,X))$ and $u \mapsto \mathbb{F}(u)$ in $L^p(I;X)$ then the local well-posedness of \eqref{eq:non_local_quasilinear_intro} holds. Amann further shows existence of a Lipschitz semi-flow associated to the equation. These results rely on a Banach fixed point theorem which inherently requires the Lipschitz continuity of the operators. \\

Since the embedding $D \cinj X$ is compact, we rely here on Schauder's fixed point theorem to obtain existence results. No Lipschitz continuity is required, thereby permitting lower regularity assumptions on the nonlinearities $\mathbb{A}$ and a mere weak continuity condition for $\mathbb{F}$ (see e.g. Arendt \cite{arendt2010global}). The existence is guaranteed through growth conditions on the regularity constant $K_{\mathbb{A}(u)}$ in the regime $\norm*{u}_{\mcl{X}(I)} \ra +\infty$. We use our answers to \ref{item:q2} in the setting of relatively continuous operators of Hölder-type to obtain quantitative growth conditions on $\mathbb{A}(u)$ and $\mathbb{F}(u)$. An interesting interplay between the pointwise maximal regularity constant of each operator $\mathbb{A}(u)$ and their Hölder semi-norm is observed. 

In \Cref{section:main_results}, we introduce the notion of $L^p$ maximal regularity on bounded and unbounded intervals as well as the maximal regularity constant. We describe the maximal regularity space $\MR^p(I)$ and a special discussion is dedicated to scale-invariant norms. Then relatively continuous operators and their \emph{ranges of relative continuity} are introduced. Related preliminary results pertaining to relatively continuous operators are further presented. This context allows the exposition of our main result: we first present the quantitative behavior of the regularity constant with respect to perturbations of the form $A(\cdot) + \lambda I$, in particular in the asymptotic regime $\lambda \ra \pm \infty$ (\autoref{thm:perturbation_dilation}), we then show our main quantitative estimates of the regularity constant for relatively continuous operators (\autoref{thm:relative_continuity}, \autoref{thm:A_MR_log_estimate}). \\
As an application to the estimate in \autoref{thm:relative_continuity}, we present various conditions giving positive answers to \ref{item:q3} (\autoref{cor:uniform_mr_bound},\autoref{cor:weak-weak_stability}, \autoref{cor:strong-strong_stability}).
The growth condition for the global existence for quasilinear equations is then presented for a Hölder-type class of relatively continuous operators \autoref{thm:existence_abstract}, \autoref{thm:existence_explicit}.\\

In \Cref{section:2} we describe the operators $L^I_A:= \ddt + \mcl{A}^I$ related to the Cauchy problem on $I \subset \R$ and recall known properties in the autonomous case. We then prove precise perturbation results with asymptotic behavior through fine manipulations of the chosen norm of the maximal regularity space. \\
\Cref{section:3} is dedicated to the proof of intermediate results, in particular \autoref{thm:finite_glueing} which provides the frame of the quantitative estimate \autoref{thm:relative_continuity} proved in \Cref{section:4}. \\
Lastly \Cref{section:5} treats the global existence of nonlocal quasilinear equations for a class of relatively continuous operators. Some proofs, which are classical or standard, were omitted here, but can be found in \cite{belin2024wp}. 

\section{Main results}
\label{section:main_results}

In the quantitative study of $L^p$ maximal regularity, the choice of the norm in the maximal regularity space $\MR^p(I)$ can have a noticeable impact and should be chosen with care. Hence in \Cref{subsection:operators} we set basic notations for operators and norms and we motivate our choices for the norm of the maximal regularity space via time-scaling invariance. We further introduce in \Cref{subsection:maximal} basic notations of $L^p$ maximal regularity and elementary results. \Cref{subsection:relatively} contains an introduction to the class of relatively continuous operators and we describe topological properties of this space of operators. \Cref{subsection:main} finally showcases our main results.

\subsection{Operators, spaces and measurability}
\label{subsection:operators}

\paragraph{Graph norms and domains of operators}

If two norms $N_1$ and $N_2$ are \emph{equivalent} in a given Banach space, meaning there exist $c, C > 0$ such that

\begin{equation*}
cN_2 \leq N_1 \leq CN_2
\end{equation*}
we write $N_1 \overset{c, C}{\sim} N_2$ or equivalently $N_2 \overset{\frac{1}{C}, \frac{1}{c}}{\sim} N_1$. If the constants are universal or not of interest for further development we may simply write $N_1 \sim N_2$.\\

A linear operator $A: D(A) \ra X$ on a Banach space $X$, defined on a linear subspace $D(A) \subset X$, is said to be \emph{closed} if its graph $\Gamma(A):= \cbk*{(x, Ax) ; x \in D(A)}$ is closed. In such a case, we can equip $D(A)$ with the $p$-\emph{graph norm} induced by $A$, $p\in [1,+\infty)$, and defined by 
\begin{equation*}
\norm*{\cdot}_{A, p}:= \pr*{\norm*{\cdot}^p_X + \norm*{A\cdot}^p_X}^{\frac{1}{p}}.
\end{equation*}
The domain $D(A)$ is then a Banach space. It is straightforward to see that for any two ${p,q \in (1,+\infty)}$,
$\norm*{\cdot}_{A,p} \overset{\frac{1}{2}, 2}{\sim} \norm*{\cdot}_{A,q}$. Since the constants of equivalence are universal, we denote without ambiguity $\norm*{.}_A$ for any $p$-graph norm of $A$. \\ 


%
%
%

Let $(X, \norm*{\cdot}_X)$ be a Banach space and let $D\oset{d}{\cinj} X$ be a Banach space with norm denoted $\norm*{\cdot}_D$, which injects densely and compactly in $X$. We denote by $\mcl{L}(D; X)$ the space of \emph{bounded linear operators} from $D$ to $X$. For the rest of our developments $X$ and $D$ are fixed, as well as $p,q \in (1,+\infty)$ two conjugate exponents satisfying $\frac{1}{p} + \frac{1}{q} = 1$. 

\paragraph{Functional spaces for $L^p$ maximal regularity}

Let $I \subseteq \R$ be any open interval bounded from below i.e. $I = (a,b)$, with $-\infty< a < b \leq +\infty$ and $Y$ be a Banach space. \\

We define the vector valued Lebesgue space $L^p(I;Y)$ as the space of Bochner-measurable functions $u: I \lra Y$ such that
\begin{equation*}
\norm*{u}_{L^p(I;Y)} = \pr*{\int_I \norm*{u(t)}_Y^p\dd t}^\frac{1}{p} < +\infty
\end{equation*}

We denote by $W^{1,p}(I;Y)$ the $Y$-valued Sobolev space. It is the space of functions $u \in L^p(I;Y)$ which admit a weak derivative $\ddt u \in L^p(I;Y)$ which satisfies 
\begin{equation*}
- \int_I \sbk*{\ddt \phi}(t) u(t) \dd t = \int_I \phi(t) \sbk*{\ddt u}(t) \dd t
\end{equation*}
for all $\phi \in C^\infty_c(I;\R)$. It is endowed with the semi-norm

\begin{equation}
\label{eq:w1p_seminorm}
\sbk*{u}_{W^{1,p}(I;Y)}:= \norm*{\ddt u}_{L^p(I;Y)} = \pr*{\int_I \norm*{\sbk*{\ddt u}(t)}^p_Y\dd t}^{\frac{1}{p}}
\end{equation}

As usual in the spaces $L^p(I;Y)$ and $W^{1,p}(I;Y)$ we identify two elements $u$ and $v$ if they agree $\mcl{L}^1$-almost everywhere on $I$ for the Lebesgue measure $\mcl{L}^1$. Throughout we endow the equality relation $=$ with this meaning when we identify elements of such spaces.\\

Hereafter we define the maximal regularity space on $I$ along with its norm. 

\begin{df}
\label{df:mrp_space}
Define $\MR^p(I)$, the $L^p$ \emph{maximal regularity space} as
\begin{equation*}
\MR^p(I):= L^p(I;D) \cap W^{1,p}(I;X)
\end{equation*}
If $b < +\infty$ we choose the following norm
\begin{equation}
\label{eq:def_mr_local_norm}
\norm*{u}_{\MR^p(I)}:= \pr*{\norm*{u}^p_{L^p(I;X)} + \norm*{u}^p_{L^p(I;D)} + |I|^p\sbk*{u}_{W^{1,p}(I;X)} }^{\frac{1}{p}}
\end{equation}
Otherwise if $b = +\infty$ then
\begin{equation}
\label{eq:def_mr_usual_norm}
\norm*{u}_{\MR^p(I)}:= \pr*{\norm*{u}^p_{L^p(I;X)} + \norm*{u}^p_{L^p(I;D)} + \sbk*{u}_{W^{1,p}(I;X)}}^{\frac{1}{p}}
\end{equation}
\end{df}

The norm \eqref{eq:def_mr_usual_norm} is more classical: it allows for a unified treatment of maximal regularity on bounded and unbounded intervals. However in our work, we mostly study maximal regularity in the bounded setting $b < +\infty$. The norm \eqref{eq:def_mr_local_norm} is well behaved under time rescalings thanks to the weight $|I|^p$ in front of the $W^{1,p}$ semi-norm. In fact, for any $\lambda \neq 0$ a natural \emph{isometry} from $\MR^p(I)$ to $\MR^p(\lambda I)$ can be defined as $\iota_\lambda: u \mapsto \lambda^{-\frac{1}{p}}u(\lambda^{-1}\cdot)$. This rescaling property will prove useful in the development of the arguments throughout. \\

For further use, let us also define the closed subspace $\MR^p_0(I) \subset \MR^p(I)$ defined as follows

\begin{equation}
\label{eq:def_mr_0}
\MR^p_0(I):= \cbk*{u \in \MR^p(I) \:\  u(a) = 0} 
\end{equation}

\begin{rem}
\label{rem:trace_space}
It makes sense to evaluate $u \in \MR^p(I)$ at any point of $I$ (e.g. $u(a)$) as the following continuous embedding ${\MR^p(I) \hookrightarrow C\pr*{\ol{I};\Tr^p}}$ holds, where $\Tr^p:= (D, X)_{\frac{1}{q}, p}$ is a \emph{real interpolation} between $D$ and $X$ of parameters $\pr*{\frac{1}{q},p}$ (see e.g. \cite[Theorem 3.12.2]{lofstrom1976interpolation}). Equivalently $\Tr^p$ can be interpreted as a \emph{trace space} of $\MR^p(I)$ meaning ${\Tr^p = \cbk*{u(a)\:\ u \in \MR^p(I)}}$ equipped with the norm 

\begin{equation*}
\norm*{x}_{\Tr^p, I} = \inf\cbk*{\norm*{u}_{\MR^p(I)}\: \ u \in \MR^p(I),\ u(a) = x} 
\end{equation*}

In the case of a bounded interval $b < +\infty$, under the isometry $\iota_\lambda:\MR^p(I) \ra \MR^p(\lambda I)$ the norm of the trace space is scaled accordingly. More precisely we have for any $x \in \Tr^p$,

\begin{align*}
\norm*{x}_{\Tr^p, I} &= \inf \cbk*{\norm*{u}_{\MR^p(I)}\: \ u \in \MR^p(I),\ u(a) = x} \\
&= \inf \cbk*{\norm*{\iota_\lambda^{-1}(v)}_{\MR^p(I)}\: \ v \in \MR^p(\lambda I),\ \iota_\lambda^{-1}(v)(a) = x} \\
&= \inf \cbk*{\norm*{v}_{\MR^p(\lambda I)}\: \ v \in \MR^p(\lambda I),\ \lambda^{\frac{1}{p}}v(\lambda a) = x}  \\
&= |\lambda|^{-\frac{1}{p}}\norm*{x}_{\Tr^p,\lambda I}
\end{align*}

Using the time-reversing rescaling $\iota_{-1}$ we see that the trace can either be taken on the left or on the right of the interval in the definition i.e. $\Tr^p = \cbk*{u(b) \:\ u \in \MR^p(a,b)}$. 
\end{rem}

\paragraph{Strong measurability of nonautonomous operators}

Let $I = (a,b)\subset \R$ be an open interval, bounded from below. For nonautonomous operators $A:\ol{I} \mapsto \mcl{L}(D;X)$, the notion of \emph{strong measurability} is necessary to allow the definition of the associated multiplication operators $\mcl{A}$ defined in the next paragraph. We want to give meaning to the quantity

\begin{equation*}
\norm*{\mcl{A}v}_{L^p(I;X)} = \pr*{\int_I \norm*{A(t)v(t)}^p_X\dd t}^{\frac{1}{p}}
\end{equation*}
which requires the measurability of the map $t \mapsto \norm*{A(t)v(t)}_X$. And if $t \mapsto A(t)$ is \emph{strongly measurable} as defined below, the measurability of this map will hold for any $v \in L^p(I;D)$.

\begin{df}
A nonautonomous operator $A: \ol{I} \rightarrow \mcl{L}(D; X)$ is said to be \emph{strongly measurable} if for any $x \in D$, the map $A(\cdot)x: I \rightarrow X$ is \emph{Bochner-measurable} in $X$. 
\end{df}

The following remarks motivate the use of this notion of strong measurability for applications.  

\begin{rem}
\begin{enumerate}
\item The notion of strong measurability for operators is a weaker type of measurability than the measurability of $t \mapsto A(t)$ in $\mcl{L}(D;X)$. Even with $D$ and $X$ separable, these two notions of measurability are not equivalent in general.

\item By virtue of Pettis' theorem \cite{pettis1938on-integration}, since $X$ is separable, if $t\mapsto A(t)x$ is weakly continuous in $X$ for any $x \in D$, then $A$ is strongly measurable. Pettis' theorem allows to show the strong measurability of differential operators with continuous coefficients. For example fix $c \in C_b(\ol{I}\times\R)$, let $X = L^2(\R)$, $D = H^1(\R)$ and define for each $t \in \ol{I}$ the operator $A(t)u:= \cbk*{x \mapsto c(t,x) \frac{\dd}{\dd x} u(x)} \in \mcl{L}(D, X)$. We see that $t \mapsto \sbk*{A(t)}u = c(t, x)\frac{\dd}{\dd x} u(x)$ is weakly continuous in $L^2(\R)$ if $u \in H^1(\R)$. Since $L^2(\R)$ is separable, Pettis' theorem applies and we obtain the strong measurability of $A$.
\end{enumerate}

\end{rem}

We may then equip this notion of strong measurability to Lebesgue spaces of nonautonomous operators. 

\begin{df}
\label{df:operator_lebesgue_space}
We define the Lebesgue space 

\begin{equation*}
L^\infty\pr*{I;\mcl{L}(D;X)}:= 
\cbk*{
A: I \ra \mcl{L}(D;X) \ \left| \  
\begin{array}{l}
A \text{ is strongly measurable}\\
\\
\qquad \text{ and }\\
\\
\text{ For any } x \in D, \norm*{A(\cdot) x}_X \in L^\infty(I;\R) 
\end{array}
\right.
}
\end{equation*}
\end{df}

The separability of $D$ and the uniform boundedness principle ensure that the map $t \mapsto \norm*{A(t)}_{\mcl{L}(D;X)}$ is essentially bounded in $I$ for any $A \in L^\infty\pr*{I;\mcl{L}(D;X)}$.\\
As usual, the identification in $L^\infty\pr*{I;\mcl{L}(D;X)}$ is performed almost everywhere in $I$ (for the one-dimensional Lebesgue measure) and the norm $\norm*{A}_{L^\infty(I;\mcl{L}(D;X))}:= \norm*{\norm*{A(\cdot)}_{\mcl{L}(D;X)}}_{L^\infty(I)}$ gives it the structure of a Banach space. When there is no ambiguity, we shall often write $\norm*{A}_{\infty}$ for this norm.

\subsection{$L^p$ maximal regularity}
\label{subsection:maximal}

Let $I = (a,b) \subseteq \R$ be an open interval bounded from below, i.e. $-\infty < a < b \leq +\infty$. We define the maximal regularity property and the maximal regularity constant for autonomous and nonautonomous operators on $I$. We then derive elementary properties of the classes defined below, in particular the stability of the maximal regularity property under restriction of the time interval.

\paragraph{Autonomous operators: $\mrprop_p(I)$}

\begin{df}
\label{df:mrp}
An operator $A \in \mcl{L}(D; X)$ has the $L^p$ \emph{maximal regularity property} on $I$ if 
\begin{enumerate}[label=\emph{(aut.\roman*)}]
\item \label{df:Ai} $\norm*{\cdot}_A \sim \norm*{\cdot}_D$.
\item \label{df:Aii} For each $f \in L^p(I;X)$ there exists a unique solution $v^A_f \in \MR^p(I)$ of the following abstract Cauchy problem,
\begin{equation}
\label{eq:acp_0}
\left\{
\begin{array}{rclr}
\displaystyle \ddt v(t) + Av(t) &=& f(t) & \text{ for } t \in I\\
v(a) &=& 0
\end{array}
\right.
\end{equation}
and there exists $K \geq 0$, independent of $f$ such that 
\begin{equation}
\label{eq:mr_ineq}
\norm*{v^A_f}_{\MR^p(I)} \leq K \norm*{f}_{L^p(I; X)}.
\end{equation}
\end{enumerate}

We denote $\mrprop_p(I)$ this family of maximally regular autonomous operators and for each $A \in \mrprop_p(I)$, the infimum $K$ in \eqref{eq:mr_ineq} is denoted by $\sbk*{A}_{\mrprop_p(I)}$. It is called the \emph{maximal regularity constant} of $A$ on $I$. 
\end{df}

\paragraph{Nonautonomous operators: $\MRprop_p(I)$}

\begin{df}
\label{df:MRp}
A nonautonomous operator $A\in L^\infty(I;\mcl{L}(D;X))$ has the $L^p$ \emph{maximal regularity property} on $I$ if 
\begin{enumerate}[label=\emph{(nonaut-\roman*)}]
\item \label{df:NAi} For any $t \in I$, $\norm*{\cdot}_{A(t)} \sim \norm*{\cdot}_D$.
\item \label{df:NAii} For any $f \in L^p(I;X)$ there exists a unique solution $v^A_f \in \MR^p(I)$ of the following abstract \emph{nonautonomous} Cauchy problem

\begin{equation}
\label{eq:nacp_0}
\left\{
\begin{array}{rclr}
\displaystyle \ddt v(t) + A(t)v(t) &=& f(t) & t \in I\\
v(a) &=& 0
\end{array}
\right.
\end{equation}

And there exists $K\geq 0$, independent of $f$ such that 
\begin{equation}
\label{eq:MR_ineq}
\norm*{v^A_f}_{\MR^p(I)} \leq K \norm*{f}_{L^p(I; X)}
\end{equation}
\end{enumerate}

We denote $\MRprop_p(I)$ this family of nonautonomous and maximally regular operators and for each $A \in \MRprop_p(I)$ the infimum $K$ in \eqref{eq:MR_ineq} is denoted by $\sbk*{A}_{\MRprop_p(I)}$. It is \emph{also} called the \emph{maximal regularity constant} of $A$.
\end{df}

Following these definitions several clarifying remarks are in order. 

\begin{rem}
\begin{enumerate}[label = $(\roman*)$]
\item \label{rem:i} The condition $\norm*{\cdot}_A \sim \norm*{\cdot}_D$ is equivalent to asking that $A$ be a \emph{closed unbounded} operator on $X$ with domain $D$, provided $D \subsetneq X$. If $D = X$, then maximal regularity will hold for all $A \in \mcl{L}(X)$ by virtue of the abstract Cauchy-Lipschitz theorem. In the case $X = L^p(\Omega)$, $D = W^{2,p}(\Omega)\cap W^{1,p}_0(\Omega)$ it is known that autonomous second order and uniformly elliptic operators with continuous coefficients are maximally regular, condition \ref{df:Ai} being given by the theory of elliptic regularity (see e.g. \cite{gilbarg1977elliptic}). 

\item \label{rem:ii} See that $\mrprop_p(I)$ (resp. $\MRprop_p(I)$) is not stable by finite sum because requirement \ref{df:Ai} (resp. \ref{df:NAi}) fails in general for the sum of two operators. \\


\item \label{rem:iii} When the time interval $I$ is clear by context, we shall write $\mrprop_p$ and $\MRprop_p$ instead of $\mrprop_p(I)$ or $\MRprop_p(I)$ respectively. Moreover we easily see that $\mrprop_p(I) \subset \MRprop_p(I)$. 

\item \label{rem:iv} For autonomous operators, $\mrprop_p(I) = \mrprop_p(J)$ for any two bounded intervals $I$, $J \subset \R$, in other words, in the case of bounded intervals, maximal $L^p$ regularity is independent of the interval. However we remark that, according to \autoref{df:mrp} $\mrprop_p(0,1) \neq \mrprop_p(\R_+)$ in general. Fix an open bounded space interval $\Omega \subset \R$ and consider the diagonally perturbed Dirichlet Laplacian operator $A:= -\Delta_D - \lambda\Id$ defined on $D = H^2(\Omega)\cap H^1_0(\Omega) \subset L^2(\Omega)$, where $\lambda \geq 0$ is some eigenvalue of $-\Delta_D$. We have $A \in \mrprop_p(0,1)$ while $A \not\in \mrprop_p(\R_+)$. Indeed, if $u_\lambda \in C^\infty(\ol{\Omega})$ is an eigenvector of $-\Delta_D$ associated to $\lambda$, then $t \in \R_+ \mapsto u_\lambda$, is the unique solution of the homogeneous Cauchy problem with initial condition $u(0) = u_\lambda \in \Tr^p$. Hence $u \in \MR^p(0,1)$ while $u \not \in \MR^p(\R^*_+)$. \\
Another way to see that $A \not \in \mrprop_p(\R^*_+)$ is to remark that it is not a sectorial operator. 
\end{enumerate}
\end{rem}

If we are given a nonautonomous operator $A: \ol{I} \mapsto \mcl{L}(D,X)$, a source term $f \in L^p(I;X)$ and an initial condition $x \in \Tr^p$ we can consider the following Cauchy problem: 

\begin{equation}
\label{eq:cp}
\left\{
\begin{array}{rcll}
\displaystyle \ddt v(t) + A(t) v(t) &= &f(t)& t \in I\\
v(a) &=& x&
\end{array}
\right.
\tag{CP}
\end{equation}
where recall that $I = (a,b) \subset \R$ is bounded from below. We use the notation \eqref{eq:cp}$^I_{x,f}$ to refer to the above Cauchy problem: we explicitly specify the initial datum $x$, the source term $f$ and the time interval $I$ under consideration. It turns out that condition \ref{df:NAii} is equivalent to the well-posedness of the general Cauchy problem \eqref{eq:cp}$^I_{x,f}$ for any $x \in \Tr^p$, $f \in L^p(I;X)$. We omit the proof of this result which relies on standard lifting techniques. 

\begin{prop}
\label{prop:mr_nacp}
Let $A \in L^\infty(I;\mcl{L}(D;X))$ be a nonautonomous operators satisfying \ref{df:NAi}. \\

Then $A \in \MRprop_p(I)$ if and only if for any $x \in \Tr^p$, $f \in L^p(I;X)$ there exists a unique solution to \eqref{eq:cp}$^I_{x,f}$ and there exists $K \geq 0$, independent of $f$ and $x$ such that 

\begin{equation*}
\norm*{v}_{\MR^p(I)} \leq K\pr*{\norm*{f}_{L^p(I;X)} + \norm*{x}_{\Tr^p,I}}
\end{equation*}
\end{prop}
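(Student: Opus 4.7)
The plan is to prove this equivalence by a standard lifting argument: the problem with initial datum $x \in \Tr^p$ is reduced to the homogeneous-initial problem already covered by \ref{df:NAii}, and the quantitative estimate is transferred through the trace-space norm.

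For the forward implication, assume $A \in \MRprop_p(I)$ and fix $x \in \Tr^p$, $f \in L^p(I;X)$. By the definition of the trace norm recalled in \autoref{rem:trace_space}, for any $\varepsilon > 0$ I can pick a lift $w \in \MR^p(I)$ with $w(a) = x$ and $\norm*{w}_{\MR^p(I)} \leq (1+\varepsilon)\norm*{x}_{\Tr^p, I}$. Because $A \in L^\infty(I;\mcl{L}(D;X))$ and $w \in L^p(I;D)$, the map $t \mapsto A(t)w(t)$ lies in $L^p(I;X)$, with $\norm*{A(\cdot)w}_{L^p(I;X)} \leq \norm*{A}_\infty \norm*{w}_{L^p(I;D)}$. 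Setting $g := f - \ddt w - A(\cdot)w \in L^p(I;X)$, condition \ref{df:NAii} furnishes a unique $\tilde{v} \in \MR^p_0(I)$ satisfying $\ddt \tilde{v} + A(\cdot)\tilde{v} = g$ with $\norm*{\tilde{v}}_{\MR^p(I)} \leq \sbk*{A}_{\MRprop_p(I)} \norm*{g}_{L^p(I;X)}$. Then $v := \tilde{v} + w$ solves \eqref{eq:cp}$^I_{x,f}$, and combining the triangle inequality for $g$ with the lift bound yields
\begin{equation*}
\norm*{v}_{\MR^p(I)} \leq \bigl(1 + \sbk*{A}_{\MRprop_p(I)} (1 + \norm*{A}_\infty)\bigr)(1+\varepsilon)\bigl(\norm*{f}_{L^p(I;X)} + \norm*{x}_{\Tr^p,I}\bigr),
\end{equation*}
which gives an acceptable $K \geq 0$ after letting $\varepsilon \to 0$ (or simply taking $\varepsilon = 1$).

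Uniqueness for arbitrary $x$ reduces immediately to uniqueness in $\MR^p_0(I)$: if $v_1, v_2$ are two solutions of \eqref{eq:cp}$^I_{x,f}$, their difference $v_1 - v_2$ lies in $\MR^p_0(I)$ and solves the homogeneous problem with zero source, hence vanishes by the uniqueness part of \ref{df:NAii}.

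The converse implication is immediate: specializing the assumed well-posedness to $x = 0$ gives exactly \ref{df:NAii}, with maximal regularity constant bounded by $K$. The only point requiring a little care throughout is measurability and integrability of $A(\cdot)w$, which is precisely what the framework of \autoref{df:operator_lebesgue_space} is designed to guarantee; no genuine obstacle arises beyond bookkeeping of the constants.
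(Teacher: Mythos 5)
Your proof is correct and is exactly the standard lifting argument that the paper itself invokes (the authors omit the proof, noting it "relies on standard lifting techniques"): lift $x$ by a near-optimal $w \in \MR^p(I)$, solve the zero-initial-value problem for $g = f - \ddt w - A(\cdot)w$ via \ref{df:NAii}, add back $w$, and get uniqueness by subtracting two solutions and invoking uniqueness in $\MR^p_0(I)$, the converse being the specialization $x=0$. One cosmetic remark: with the weighted norm \eqref{eq:def_mr_local_norm} on a bounded interval one only has $\norm*{\ddt w}_{L^p(I;X)} \leq |I|^{-1}\norm*{w}_{\MR^p(I)}$, so your explicit constant should read $1 + \sbk*{A}_{\MRprop_p(I)}\pr*{\max\pr*{1,|I|^{-1}} + \norm*{A}_\infty}$, which is harmless since the statement only asks for some finite $K$ independent of $f$ and $x$.
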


It is quite natural to want to be able to restrict an operator $A \in \MRprop_p(I)$ to any subinterval $J \subset I$ while keeping the maximal regularity property of the restricted operator $A|_J$. This property amounts to the existence of a regular evolution operator for the equation. In our case, the compactness of the embedding $D \cinj X$ is a key ingredient that allows to derive this property. The proof of the result can be found in \cite[Appendix D]{belin2024wp}. 

\begin{prop}
\label{prop:mr_injections}
Let $I \subset \R$ be an open interval bounded from below and let $A \in \MRprop_p(I)$. \\
Then for any subinterval $J \subset I$, $A|_J \in \MRprop_p(J)$. Moreover, if either $|J| = |I| = \infty$ or $|J| \leq |I| < +\infty$ then
\begin{equation}
\label{eq:mr_const_IJ_same_type}
\sbk*{A|_J}_{\MRprop_p(J)} \leq \sbk*{A}_{\MRprop_p(I)}
\end{equation}
Otherwise if $|J| < |I| = +\infty$, then
\begin{equation}
\label{eq:mr_const_IJ_diff_type}
\sbk*{A|_J}_{\MRprop_p(J)} \leq \pr*{|J| \vee 1}\sbk*{A}_{\MRprop_p(I)}
\end{equation}

\end{prop}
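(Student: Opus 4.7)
The plan is to lift the Cauchy problem on $J = (c,d)$ to one on $I = (a,b)$ and exploit $A \in \MRprop_p(I)$. Given $f \in L^p(J;X)$, I would extend it by zero to $\tilde f \in L^p(I;X)$; applying condition \ref{df:NAii} on $I$ to $\tilde f$ produces a unique $u \in \MR^p(I)$ solving $\ddt u + Au = \tilde f$ with $u(a) = 0$ and $\|u\|_{\MR^p(I)} \leq \sbk*{A}_{\MRprop_p(I)} \|f\|_{L^p(J;X)}$. The candidate solution on $J$ is then $v := u|_J \in \MR^p(J)$, which automatically satisfies $\ddt v + A|_J v = f$ on $J$; what remains is to check that $v(c) = 0$ and that $v$ is the unique such solution in $\MR^p(J)$.

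The delicate point is the vanishing of $v$ at $c$. On $(a,c)$, $u$ solves the homogeneous equation with $u(a)=0$, and I would conclude $u|_{(a,c)}\equiv 0$ by constructing a well-behaved evolution family $\{U(t,s)\}_{a\leq s \leq t < b}$ for the equation and writing $u(t) = \int_a^t U(t,s)\tilde f(s)\,\dd s$, which vanishes for every $t\leq c$ since $\tilde f \equiv 0$ there. Uniqueness on $J$ would then follow by a symmetric extension: if $w \in \MR^p(J)$ satisfies the homogeneous equation on $J$ with $w(c) = 0$, I would extend it by zero on $(a,c)$ and by the forward $U$-evolution on $(d,b)$ to produce a $\bar w \in \MR^p(I)$ solving the homogeneous problem on $I$ with $\bar w(a) = 0$; uniqueness built into \ref{df:NAii} then forces $\bar w \equiv 0$, hence $w \equiv 0$ on $J$. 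The hard part of the proof sits here: building a well-behaved evolution family for a merely $L^\infty$-in-time operator is non-trivial, and this is exactly where the compactness of $D \cinj X$ enters (the construction being deferred to \cite[Appendix D]{belin2024wp}).

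For the quantitative bound, it suffices to compare $\|v\|_{\MR^p(J)}$ and $\|u\|_{\MR^p(I)}$ directly from \autoref{df:mrp_space}. The $L^p$-components and the $W^{1,p}$-semi-norm each contract under restriction to $J$. When $I$ and $J$ are of the same type — both infinite, or both finite with $|J|\leq|I|<+\infty$ — the weight multiplying the $W^{1,p}$-semi-norm is either absent on both sides or satisfies $|J|^p \leq |I|^p$, so $\|v\|_{\MR^p(J)} \leq \|u\|_{\MR^p(I)}$, giving \eqref{eq:mr_const_IJ_same_type}. In the mixed case $|J|<|I|=+\infty$, the extra weight $|J|^p$ appears only on the $J$-side while the $I$-norm carries no weight; factoring out $|J|^p \vee 1$ produces the constant $|J|\vee 1$ in \eqref{eq:mr_const_IJ_diff_type}.
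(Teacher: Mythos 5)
Your overall skeleton -- extend $f$ by zero, solve on $I$, restrict to $J$, and then compare the norms of \autoref{df:mrp_space} -- is sound, and your bookkeeping of the weights is correct: in the same-type cases the weight on the $W^{1,p}$ semi-norm contracts (or is absent on both sides), giving \eqref{eq:mr_const_IJ_same_type}, and in the mixed case $|J|<|I|=+\infty$ the spurious factor $|J|^p\vee 1$ appears exactly as in \eqref{eq:mr_const_IJ_diff_type}. Since the paper itself only sketches the idea and defers the proof to \cite[Appendix D]{belin2024wp}, the comparison can only be made with that outline; but measured against it, your write-up has a genuine gap rather than an alternative route.

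The gap is that the two structural facts you need -- that the solution $u$ of the extended problem vanishes on $(a,c)$, and that the homogeneous problem on $J$ with zero trace at $c$ has only the zero solution -- constitute essentially the whole content of the proposition, and you dispose of both by invoking an evolution family $U(t,s)$ whose construction you do not give and defer to the very appendix that contains the paper's proof. For a merely strongly measurable $A\in L^\infty(I;\mcl{L}(D,X))$ there is no off-the-shelf evolution family: the known constructions obtain $U(t,s)$ by first proving solvability of \eqref{eq:cp} on subintervals with arbitrary initial time, i.e.\ precisely $A|_J\in\MRprop_p(J)$ together with \autoref{prop:mr_nacp}. The paper says as much when it remarks that the restriction property ``amounts to the existence of a regular evolution operator'' and that the compactness of $D\cinj X$ is the ingredient that makes this work; so postulating $U$ first is, in effect, assuming an equivalent of the conclusion, and the compactness -- which you mention but never use -- is exactly what a complete argument must exploit. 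A secondary defect sits in your uniqueness step: when $b=+\infty$ and $d<\infty$, the forward extension $t\mapsto U(t,d)w(d)$ on $(d,b)$ need not belong to $\MR^p(d,b)$ (solutions of the homogeneous equation may grow), so even granting the evolution family, the extended function $\bar w$ need not lie in $\MR^p(I)$ and the uniqueness clause of \ref{df:NAii} on $I$ cannot be applied as stated; this case needs a separate argument (e.g.\ a cutoff or a duality/backward-problem device).
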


\subsection{Relatively continuous operators}
\label{subsection:relatively}

Regarding nonautonomous operators, relative continuity is a weaker type of continuity than strong continuity. Relative continuity incorporates the idea of perturbing continuous operators and as such allows to treat with generality various kinds of such perturbations. The definition, which we reformulate here, was first given by Arendt and coauthors in \cite[Definition 2.5]{arendt2007lp-maximal}. We state elementary topological properties of $RC(I)$, the space of relatively continuous operators on $I$. In particular $RC(I)$ is a closed linear subspace of $L^\infty(I;\mcl{L}(D;X))$. Then we exhibit a theorem, analogous in shape, to the Arzelà-Ascoli theorem for continuous maps. 

\begin{df}
Let $I = (a,b)$ be a bounded open interval.
\begin{itemize}
\item We say that a strongly measurable nonautonomous operator $A: \ol{I} \rightarrow \mcl{L}(D; X)$ is \emph{relatively continuous} if for any $\epsilon > 0$, there exists $\delta > 0$ and $\eta \geq 0$ such that 
\begin{equation}
\tag{$P^\epsilon_{\eta, \delta}$}
\label{eq:def_relative_continuity}
\forall x \in D, \forall t,s \in \ol{I}, \qquad |t-s|\leq \delta \ \Longrightarrow  \ \norm*{A(t)x - A(s)x}_X \leq \epsilon\norm*{x}_D + \eta\norm*{x}_X
\end{equation}

We denote by $RC(I;(D;X))$ the class of nonautonomous operators of $\mcl{L}(D;X)$ which are relatively continuous.

\item For $A \in RC(I;(D;X))$, we call the \emph{ranges of relative continuity} the following set-valued function, which is nondecreasing with respect to set inclusion
\begin{equation*}
r^*_A: 
\left\{
\begin{array}{ccc}
(0,+\infty) &\lra &\mcl{P}\pr{(0,+\infty)\times[0,+\infty)}\\
\epsilon &\longmapsto &\cbk*{(\delta, \eta) \:\ \eqref{eq:def_relative_continuity} \text{ holds }}
\end{array}
\right.
\end{equation*}

\item We say that $r_A = (\delta, \eta): (0,+\infty) \ra (0,+\infty)\times[0,+\infty)$ is a \emph{specific} range of relative continuity of $A$ if
\begin{enumerate}[label = ($\star$), leftmargin = 5em]
\item $r_A(\epsilon) \in r^*_A(\epsilon)$ for any $\epsilon \in (0,+\infty)$.
\item $\epsilon \mapsto \delta(\epsilon)$ is nondecreasing.
\item $\epsilon \mapsto \eta(\epsilon)$ is nonincreasing.
\end{enumerate}
\end{itemize}
\end{df}

Here $D$ and $X$ are fixed, therefore we write $RC(I)$ unambiguously for $RC(I;(D;X))$. Furthermore, when we wish to select a specific range of relative continuity $r_A$, with a slight abuse of notation, we may simply write $r_A \in r^*_A$. If $A \in RC(I)$, then it is always possible to select a specific range of relative continuity for $A$.  \\

We now state some topological facts about relatively continuous operators which are proved in \cite[Appendix B]{belin2024wp}. 

\begin{prop}
\label{prop:rc_space}
$RC(I)$ is a closed and proper linear subspace of $L^\infty(I;\mcl{L}(D;X))$.
\end{prop}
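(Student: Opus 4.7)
The plan is to verify the three assertions (linear subspace, closed, and proper) in that order, after first checking that the inclusion $RC(I) \subset L^\infty(I;\mcl{L}(D;X))$ even makes sense. For the latter, I would fix any $\epsilon_0 > 0$ and pick $(\delta_0, \eta_0) \in r^*_A(\epsilon_0)$: covering the bounded interval $\ol{I}$ by finitely many subintervals of length $\delta_0$ and iterating the inequality \eqref{eq:def_relative_continuity} from a base point $s_0 \in \ol{I}$, together with the continuity of the embedding $D \cinj X$, yields a uniform bound for $\norm*{A(t)}_{\mcl{L}(D;X)}$ in $t \in \ol{I}$, which combined with strong measurability places $A$ in $L^\infty(I;\mcl{L}(D;X))$.

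For the linear subspace property, I would fix $A, B \in RC(I)$ and $\alpha \in \R$, and use the triangle inequality $\norm*{(\alpha A + B)(t)x - (\alpha A + B)(s)x}_X \leq |\alpha|\norm*{A(t)x - A(s)x}_X + \norm*{B(t)x - B(s)x}_X$. Selecting ranges $(\delta_A, \eta_A) \in r^*_A\pr*{\tfrac{\epsilon}{2(|\alpha|+1)}}$ and $(\delta_B, \eta_B) \in r^*_B\pr*{\tfrac{\epsilon}{2}}$ and setting $\delta = \min(\delta_A, \delta_B)$, $\eta = |\alpha|\eta_A + \eta_B$ yields condition \eqref{eq:def_relative_continuity} for $\alpha A + B$ at threshold $\epsilon$.

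For closedness, suppose $A_n \in RC(I)$ converges to $A$ in $L^\infty(I;\mcl{L}(D;X))$. Given $\epsilon > 0$, pick $n$ so large that $\norm*{A - A_n}_\infty \leq \epsilon/3$, and select $(\delta, \eta) \in r^*_{A_n}(\epsilon/3)$. A standard $3\epsilon$-type argument then gives, for $|t-s| \leq \delta$ and $x \in D$:
\begin{equation*}
\norm*{A(t)x - A(s)x}_X \leq \norm*{A_n(t)x - A_n(s)x}_X + 2\norm*{A - A_n}_\infty \norm*{x}_D \leq \epsilon \norm*{x}_D + \eta \norm*{x}_X,
\end{equation*}
which establishes $A \in RC(I)$.

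The part that requires a bit of creativity is \emph{properness}, since we need to exhibit an element of $L^\infty(I;\mcl{L}(D;X))$ that fails \eqref{eq:def_relative_continuity}. My proposal is to use a simple piecewise-constant operator: fix $c \in I$ and set $A(t) = \chi_{(a,c)}(t)\, B_1 + \chi_{[c,b)}(t)\, B_2$ where $B_1, B_2 \in \mcl{L}(D;X)$ are chosen such that $B_2 - B_1$ admits no bounded extension to $X$ (for instance, in the concrete setting $X = L^2(\R)$, $D = H^1(\R)$, take $B_1 = 0$ and $B_2 = \partial_x$). Such $A$ is obviously strongly measurable and essentially bounded into $\mcl{L}(D;X)$. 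If \eqref{eq:def_relative_continuity} held for some $\epsilon < 1$ and some $(\delta, \eta)$, then choosing $t \in (c-\delta, c)$ and $s \in [c, c+\delta)$ would force $\norm*{(B_2 - B_1)x}_X \leq \epsilon\norm*{x}_D + \eta\norm*{x}_X$ for all $x \in D$; rearranging and using the definition of $\norm*{\cdot}_D$ as a graph norm when applicable would then contradict the unboundedness of $B_2 - B_1$ on $X$. The main obstacle is thus to identify a concrete pair $(B_1, B_2)$ in the abstract setting; in full generality this always exists as soon as $D \subsetneq X$, since then some $B \in \mcl{L}(D;X)$ has no extension to $\mcl{L}(X)$ (for example a rank-one operator $x \mapsto \ell(x) y_0$ where $\ell : D \to \R$ is continuous but not continuously extendable to $X$, which is provided by the Hahn–Banach theorem applied to an unbounded linear functional on $X$ that happens to be bounded on $D$).
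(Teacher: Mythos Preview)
Your treatment of the inclusion $RC(I)\subset L^\infty(I;\mcl{L}(D;X))$, of linearity, and of closedness is correct and standard; since the paper defers its own proof to an external appendix, there is nothing to compare against on those points.

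The properness argument needs repair in two places. First, a minor one: the embedding $H^1(\R)\hookrightarrow L^2(\R)$ is not compact, so your concrete example does not sit inside the paper's standing hypothesis $D\cinj X$ compactly; work on a bounded domain instead.

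Second, and more importantly, your abstract criterion is wrong. You claim that any $B\in\mcl{L}(D;X)$ with no bounded extension to $X$ yields a counterexample, and you propose a rank-one $B=\ell(\cdot)y_0$ via Hahn--Banach. But non-extendability to $\mcl{L}(X)$ is \emph{not} what obstructs relative continuity: what you actually need is that for some $\epsilon_0>0$ there is \emph{no} $\eta$ with $\norm{Bx}_X\le\epsilon_0\norm{x}_D+\eta\norm{x}_X$ for all $x\in D$. These are different conditions. With $D=H^2\cap H^1_0(\Omega)$, $X=L^2(\Omega)$ and $B=\partial_x$, Gagliardo--Nirenberg plus Young gives $\norm{\partial_x u}_{L^2}\le\epsilon\norm{u}_D+C_\epsilon\norm{u}_X$ for \emph{every} $\epsilon>0$, so the associated piecewise-constant operator \emph{is} in $RC(I)$ even though $\partial_x$ does not extend to $\mcl{L}(L^2)$; this is precisely the mechanism exploited in the paper's counterexample following \autoref{prop:arzelà_ascoli}. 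The same objection kills your rank-one construction: $\ell(u)=u(0)$ on $H^1_0(\Omega)$ does not extend to $L^2$ yet satisfies $|\ell(u)|\le\epsilon\norm{u}_{H^1}+C_\epsilon\norm{u}_{L^2}$.

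Your own ``graph norm'' remark is in fact the correct fix, and it already covers your $H^1$--$L^2$ example. If $\norm{\cdot}_D\sim\norm{\cdot}_B$ (as happens for $B=\partial_x$ on $H^1_0$), then $\norm{Bx}_X\le\epsilon\norm{x}_D+\eta\norm{x}_X$ rearranges to $(1-C\epsilon)\norm{Bx}_X\le(\eta+C\epsilon)\norm{x}_X$, which for small $\epsilon$ forces $B\in\mcl{L}(X)$ and hence $\norm{\cdot}_D\sim\norm{\cdot}_X$, contradicting $D\subsetneq X$. So drop the Hahn--Banach paragraph and instead take for $B$ any closed unbounded operator on $X$ with domain $D$; in the paper's setting such a $B$ is always available (any element of $\mrprop_p$, by condition \ref{df:Ai}).
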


Before stating our Arzelà-Ascoli type result, let us define the notion of relative equicontinuity. 

\begin{df}
We say that a family $\mcl{F} \subset RC(I)$ is \emph{relatively equicontinuous}, if for any $\epsilon > 0$, there exists $\delta > 0$ and $\eta \geq 0$ such that \eqref{eq:def_relative_continuity} holds for any $A \in \mcl{F}$. \\

Equivalently, this means that
\begin{equation*}
\bigcap_{A \in \mcl{F}} r_A^* \neq \emptyset
\end{equation*}

\end{df}

For the statement of the Arzelà-Ascoli-type theorem, we describe several topologies one may put on the space $L^\infty(I;\mcl{L}(D;X))$, here translated in terms of convergence. Given a family $\cbk*{A_\lambda}_{\lambda \in \Lambda} \subset \mcl{L}(D;X)^I$ of nonautonomous operators, where $\Lambda$ is some unspecified metric space, we can describe the following convergences: 

\begin{description}[leftmargin = 50pt]
\item[$(t,x)$-pointwise convergence in $X$:] In this topology $A_\lambda \uset{\lambda \ra \ol{\lambda}}{\lra} A_{\ol{\lambda}}$ if for all $(t,x) \in I\times D$, $A_{\lambda}(t)x \uset{\lambda \ra \ol{\lambda}} \lra A_{\ol{\lambda}}(t)x$ strongly in $X$. It is generated by the product topology of $X^{I\times D}$. 
\item[$t$-pointwise convergence in $\mcl{L}(D;X)$:] In this topology $A_\lambda \uset{\lambda \ra \ol{\lambda}}{\lra} A_{\ol{\lambda}}$ if for all $t \in I$, $A_{\lambda}(t) \uset{\lambda \ra \ol{\lambda}} \lra A_{\ol{\lambda}}(t)$ for the norm topology in $\mcl{L}(D;X)$. It is generated by the product topology of $\mcl{L}(D;X)^I$. 
\item[Uniform convergence:] This is the topology generated by $\norm*{\cdot}_\infty$ as defined in \autoref{df:operator_lebesgue_space}. 
\end{description}

There is an obvious hierarchy of these three topologies, the former being the weakest and the latter being the strongest.

\begin{prop}
\label{prop:arzelà_ascoli}
Let $\mcl{F}\subset RC(I)$ be a family of relatively continuous operators on $I$. The following properties hold:

\begin{enumerate}[label = (\roman*)]
\item \label{i:aa_i} If $\mcl{F}$ is relatively compact in $L^\infty(I;\mcl{L}(D,X))$ then $\mcl{F}$ is relatively equicontinuous. 
\item \label{i:aa_ii} If $\mcl{F}$ is relatively equicontinuous and for each $t \in I$, $\cbk*{A(t) \:\ A \in \mcl{F}}$ is bounded in $\mcl{L}(D,X)$ then $\mcl{F}$ is bounded in $L^\infty(I;\mcl{L}(D,X))$.
\item \label{i:aa_iii} If $\mcl{F}$ is relatively equicontinuous then $\ol{\mcl{F}} \subset RC(I)$ and is also relatively equicontinuous, where $\ol{\mcl{F}}$ is the sequential closure of $\mcl{F}$ for the $(t,x)$-pointwise convergence.
\end{enumerate}
\end{prop}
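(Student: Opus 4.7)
The plan is to establish each of the three properties separately, with a common thread: the defining inequality \eqref{eq:def_relative_continuity} behaves well under $L^\infty$-approximation (item (i)), finite covers of $\overline{I}$ (item (ii)), and pointwise limits in $X$ (item (iii)).

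For (i), I would exploit total boundedness in $L^\infty$. Fix $\epsilon > 0$. Since $\overline{\mathcal{F}}$ is compact and contained in $RC(I)$ by Proposition \ref{prop:rc_space}, I extract a finite $\epsilon/3$-net $A_1, \ldots, A_N \in \overline{\mathcal{F}}$ and select for each a specific range of relative continuity at level $\epsilon/3$, yielding parameters $(\delta_i, \eta_i)$. Setting $\delta := \min_i \delta_i > 0$ and $\eta := \max_i \eta_i$, for any $A \in \mathcal{F}$ and an index $i$ with $\|A - A_i\|_\infty \leq \epsilon/3$, the triangle inequality
\begin{equation*}
\|A(t)x - A(s)x\|_X \leq \|A(t)x - A_i(t)x\|_X + \|A_i(t)x - A_i(s)x\|_X + \|A_i(s)x - A(s)x\|_X
\end{equation*}
bounds the outer terms by $\tfrac{\epsilon}{3}\|x\|_D$ and, for $|t-s| \leq \delta$, the middle one by $\tfrac{\epsilon}{3}\|x\|_D + \eta_i\|x\|_X$. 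Summing yields the required uniform bound $\epsilon \|x\|_D + \eta \|x\|_X$.

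For (ii), I would apply relative equicontinuity at $\epsilon = 1$ to obtain uniform parameters $(\delta_1, \eta_1)$ valid for every $A \in \mathcal{F}$. Since $\overline{I} = [a,b]$ is compact, I choose finitely many points $t_1, \ldots, t_N \in I$ such that every $t \in \overline{I}$ is within $\delta_1$ of some $t_j$; the pointwise-boundedness hypothesis then gives $M_j := \sup_{A \in \mathcal{F}} \|A(t_j)\|_{\mathcal{L}(D;X)} < +\infty$. A triangle inequality at $t_j$ combined with the embedding $D \hookrightarrow X$ of constant $c$ yields $\|A(t)x\|_X \leq (M_j + 1 + c\eta_1)\|x\|_D$, hence the uniform $L^\infty$ bound $\max_j M_j + 1 + c\eta_1$.

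For (iii), I would pass to the limit in the defining inequality. Given $A_n \in \mathcal{F}$ converging $(t,x)$-pointwise to $A$, relative equicontinuity provides $(\delta, \eta)$, independent of $n$, with $\|A_n(t)x - A_n(s)x\|_X \leq \epsilon \|x\|_D + \eta \|x\|_X$ for $|t-s| \leq \delta$. Since $A_n(t)x - A_n(s)x \to A(t)x - A(s)x$ strongly in $X$, the lower semicontinuity of $\|\cdot\|_X$ transfers the bound to $A$; strong measurability of $A(\cdot)x$ is preserved as a pointwise limit of Bochner-measurable functions (using separability of $X$ via Pettis). This shows $A \in RC(I)$, and the universal $(\delta, \eta)$ witnesses the relative equicontinuity of $\overline{\mathcal{F}}$. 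The main subtlety sits in (i): the $\epsilon/3$-net must lie in $RC(I)$ so that a range of relative continuity is at hand, which is precisely what Proposition \ref{prop:rc_space} delivers; items (ii) and (iii) are then direct once the defining inequality is activated at the right level of $\epsilon$.
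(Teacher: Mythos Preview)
Your argument is correct and follows the natural Arzel\`a--Ascoli template: total boundedness plus a finite $\epsilon/3$-net for (i), a finite $\delta$-cover of $\overline{I}$ combined with pointwise bounds for (ii), and passage to the limit in the defining inequality for (iii). The paper does not include its own proof of this proposition---it is deferred to the supplementary document \cite{belin2024wp}---so a direct comparison is not possible, but your approach is the standard one and almost certainly coincides with what is done there. One minor remark on (i): since $\mathcal{F}$ itself is totally bounded you may take the $\epsilon/3$-net directly from $\mathcal{F}$, which makes the appeal to Proposition~\ref{prop:rc_space} optional; also note that $\norm*{A-A_i}_\infty\le\epsilon/3$ gives the pointwise bound only for a.e.\ $t$, so the resulting inequality holds for a.e.\ $t,s$---this is consistent with the paper's identification of $RC(I)$ as a subspace of $L^\infty(I;\mathcal{L}(D,X))$ modulo null sets.
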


Note that, contrary to the standard Arzelà-Ascoli theorem for continuous maps, if we replace the two occurrences of "bounded" by the expression "relatively compact" in \ref{i:aa_ii} then the conclusion may fail. A counter-example is provided by the sequence of nonautonomous operators, $n \geq 2$, $t \in [0,1]$, $A_n(t):= A + \1_{[\frac{1}{n}, \frac{2}{n}]}(t) B$ where $A \in \mcl{L}(D,X)$ and $B \in \mcl{L}(D, X)$ satisfies an interpolation inequality i.e. there exists $L > 0$ and $\theta \in (0,1)$ such that for any $x \in D$,

\begin{equation*}
\norm*{Bx}_X \leq L \norm*{x}^{\theta}_X\norm*{x}^{1-\theta}_D
\end{equation*}
Thanks to Young's inequality, it is straightforward to check that the family $\cbk*{A_n}_{n \in \N} \subset RC(0,1)$ is relatively equicontinuous and that $A_n(t)x \lra A$ in $X$ as $n \ra +\infty$ for all $t \in [0,1]$, $x \in D$. However 
\begin{equation*}
\forall n \in \N, \qquad \norm*{A_n - A}_{\infty} = \norm*{B}_{\mcl{L}(D,X)} > 0
\end{equation*}

\subsection{Main results and scheme of the proof}
\label{subsection:main}

In our main result, we derive quantitative estimates of the maximal regularity constant for relatively continuous operators which satisfy pointwise maximal regularity up to a small perturbation. This result is proved through a combination of intermediate results which have their own interest. In \Cref{subsubsection:asymptotic} we describe the asymptotic behavior of the regularity constant under diagonal perturbations (i.e. taking the form $A + \lambda \Id$). In \Cref{subsubsection:maximal} we describe the main quantitative estimate as a result of a gluing-up of operators defined on subintervals. \Cref{subsubsection:topologies} discusses limiting behavior of operators in $\MRprop_p$ in particular its persistence in the limit. Finally in \Cref{subsubsection:global} a general result on the global existence of a solution to the Cauchy problem for abstract quasilinear equations on bounded intervals and a corollary concerning relatively continuous operators in this context are presented. 

\subsubsection{Asymptotic behavior of diagonal perturbations}
\label{subsubsection:asymptotic}
For our quantitative inquiry it becomes important to evaluate quite precisely the behavior of the maximal regularity constant with respect to diagonal perturbations. Indeed, the proof of the main result relies on a perturbation argument. It is stated in the following. 

\begin{thm}
\label{thm:perturbation_dilation}
Let $I\subset \R$ be a bounded open interval and let $A \in L^\infty(I;\mcl{L}(D; X))$ be a nonautonomous operator. The following statements are equivalent: 

\begin{enumerate}[label = $(\roman*)$]
\item  There exists $\lambda_0 \in \R$ such that $A + \lambda_0 \in \MRprop_p(I)$.
\item  $A \in \MRprop_p(I)$.
\item  For any $\lambda \in \R$, $A + \lambda \in \MRprop_p(I)$.
\end{enumerate}
And if one of these statements holds true, we have for any $\lambda \in \R$,

\begin{equation}
\label{eq:bound_nonaut_perturbation_main}
\sbk*{A + \lambda}_{\MRprop_p(I)} \leq c_p(\lambda|I|)\sbk*{A}_{\MRprop_p(I)}
\end{equation}
where the function $c_p: \R \ra \R_+$ satisfies 
\begin{equation}
\label{eq:def_cp_main}
c_p(\nu):= (|\nu| + \kappa_p(\nu))\pr*{1 + e^{p\nu_-} - e^{-p\nu_+}}^{\frac{1}{p}}
\end{equation}
with $\displaystyle \frac{\kappa_p(\nu)}{|\nu|} \uset{|\nu| \ra + \infty}{\lra} 0$ and $\kappa_p(0) = 1$. In particular the following asymptotic behavior is observed for $c_p$: 
\begin{equation*}
c_p(\nu) \uset{\nu \ra -\infty} \sim -\nu e^{-\nu}
\end{equation*}
\begin{equation*}
c_p(\nu) \uset{\nu \ra +\infty} \sim \nu
\end{equation*}

\end{thm}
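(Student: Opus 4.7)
The backbone of the argument is the gauge transformation $v(t) := e^{-\lambda(t-a)} u(t)$, which, since $I$ is bounded, is a linear bijection of $\MR^p(I)$ preserving the zero initial trace. Via the Leibniz identity $\ddt v = e^{-\lambda(t-a)}\pr*{\ddt u - \lambda u}$, this bijection converts $\ddt u + A(t)u = f$, $u(a)=0$ into $\ddt v + (A(t)+\lambda)v = e^{-\lambda(\cdot-a)}f$, $v(a)=0$, and vice versa. For the implication (ii)~$\Rightarrow$~(iii), given $g \in L^p(I;X)$, we set $f := e^{\lambda(\cdot-a)} g \in L^p(I;X)$, let $u$ be the unique $\MR^p(I)$-solution of the $A$-problem with source $f$, and take $v := e^{-\lambda(\cdot-a)} u \in \MR^p(I)$, which solves the $(A+\lambda)$-problem with source $g$; uniqueness transports through the gauge. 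The implication (iii)~$\Rightarrow$~(i) is trivial, while (i)~$\Rightarrow$~(ii) is obtained by running the transformation for $A+\lambda_0$ with shift $-\lambda_0$.

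Next, the isometry $\iota_{|I|^{-1}}$ of \autoref{rem:trace_space} conjugates the operator $A(\cdot)+\lambda$ on $I$ to an operator of the form $|I|\,A(|I|\,\cdot)+\nu$ on a unit-length interval, where $\nu := \lambda|I|$ and the $\MRprop_p$-constant is preserved. We may therefore reduce to $|I|=1$ and prove $\sbk*{A+\nu}_{\MRprop_p(I)} \leq c_p(\nu)\,\sbk*{A}_{\MRprop_p(I)}$ directly.

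\textbf{Quantitative estimate.} Fix $g \in L^p(I;X)$ and construct $f, u, v$ as above; by maximal regularity of $A$, $\norm*{u}_{\MR^p(I)} \leq \sbk*{A}_{\MRprop_p(I)} \norm*{f}_{L^p(I;X)}$. The three components of $\norm*{v}_{\MR^p(I)}^p$ are weighted integrals of the corresponding components of $u$:
\begin{equation*}
\norm*{v}_{L^p(I;X)}^p + \norm*{v}_{L^p(I;D)}^p = \int_a^b e^{-p\nu(t-a)}\bigl(\norm*{u(t)}_X^p + \norm*{u(t)}_D^p\bigr) \dd t,
\end{equation*}
and Minkowski applied to $\ddt v = e^{-\nu(\cdot-a)}(\ddt u - \nu u)$ bounds $\norm*{\ddt v}_{L^p(I;X)}$ by the weighted $L^p$-norms of $\ddt u$ and of $\nu u$ against $e^{-p\nu(\cdot-a)}$. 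In parallel, $\norm*{f}_{L^p(I;X)}^p = \int_a^b e^{p\nu(t-a)}\norm*{g(t)}_X^p\,\dd t$ involves the opposite exponential weight. Assembling these bounds through the explicit scalar integrals $\int_a^{a+1} e^{\pm p\nu(t-a)}\,\dd t$ produces the factor $(1 + e^{p\nu_-} - e^{-p\nu_+})^{1/p}$, while the $|\nu|$ term from the Minkowski bound on $\ddt v$ contributes the prefactor $(|\nu|+\kappa_p(\nu))$; the sublinear remainder $\kappa_p$ encodes the secondary couplings between the weighted integrals.

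\textbf{Main obstacle and asymptotics.} The technical crux is balancing the two opposite-sign exponential weights carried by $f$ (in the bound for $\norm*{u}_{\MR^p}$) and by $v$ (in its expression via $u$): a naive pointwise $L^\infty$-bound on each weight in isolation would yield a spurious factor $e^{|\nu|}$ irrespective of sign, whereas the sharper combined form $(1 + e^{p\nu_-} - e^{-p\nu_+})^{1/p}$ relies on keeping these weights under the integral sign and leveraging the normalized $\MR^p$-norm with weight $|I|^p$ on the $W^{1,p}(I;X)$-semi-norm, which ensures the bound is naturally parametrized by the dimensionless $\nu$. From the explicit form of $c_p$, the asymptotics follow immediately: as $\nu \to -\infty$, $(1 + e^{p|\nu|})^{1/p} \sim e^{|\nu|}$ dominates and $c_p(\nu) \sim |\nu| e^{|\nu|} = -\nu e^{-\nu}$; as $\nu \to +\infty$, $(2 - e^{-p\nu})^{1/p}$ stays bounded and $c_p(\nu)$ grows only linearly in $\nu$.
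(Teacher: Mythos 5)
Your reduction of the equivalence $(i)\Leftrightarrow(ii)\Leftrightarrow(iii)$ via the gauge transformation $v = e^{-\lambda(\cdot-a)}u$ and the rescaling to a dimensionless parameter $\nu = \lambda|I|$ is exactly the paper's route, and the transport of uniqueness through the gauge is fine. The problem is the quantitative step, where your argument stops precisely at the point where the real work begins. You correctly identify that bounding the two opposite-sign weights separately gives a spurious factor $e^{|\nu|}$, but your proposed remedy --- ``keeping these weights under the integral sign'' and ``assembling these bounds through the explicit scalar integrals $\int_a^{a+1} e^{\pm p\nu(t-a)}\dd t$'' --- is not an argument. The only link between the solution $u$ of the $A$-problem and its source $f = e^{\lambda(\cdot-a)}g$ is the \emph{global} maximal regularity inequality $\norm*{u}_{\MR^p(I)} \leq \sbk*{A}_{\MRprop_p(I)}\norm*{f}_{L^p(I;X)}$, and a global inequality between integrals does not commute with pointwise exponential weights; as written, your assembly can only reproduce the naive $e^{|\nu|}$ bound you yourself flag as insufficient, and it cannot yield the claimed growth $c_p(\nu)\sim\nu$ as $\nu\to+\infty$.

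What is missing is the causality mechanism the paper uses to make the weights actually cancel. Writing $\sbk*{v}^p(t)$ for the pointwise integrand of the $\MR^p$-norm, the paper performs an integration by parts in time,
\begin{equation*}
\int_0^T e^{-p\lambda t}\sbk*{v}^p(t)\dd t \;=\; e^{-p\lambda T}\int_0^T \sbk*{v}^p(t)\dd t \;+\; p\lambda\int_0^T e^{-p\lambda t}\int_0^t \sbk*{v}^p(r)\dd r\,\dd t,
\end{equation*}
and then controls the inner integral $\int_0^t \sbk*{v}^p(r)\dd r$ for \emph{every} $t$ by $\sbk*{A}^p_{\MRprop_p}\norm*{g}^p_{L^p((0,t);X)}$, which is the restriction property of \autoref{prop:mr_injections} (the same constant works on every subinterval $(0,t)$, by the zero initial condition). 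Only after this local-in-time estimate can one apply Fubini, compute $\int_r^T e^{-p\lambda(t-r)}\dd t$ explicitly, and let the weight $e^{p\lambda r}$ coming from $g = e^{\lambda\cdot}f$ cancel against $e^{-p\lambda t}$; this is what produces the factor $\pr*{1 + e^{p\nu_-} - e^{-p\nu_+}}^{\frac{1}{p}}$ instead of $e^{|\nu|}$. Your proposal contains neither the integration by parts nor any use of the subinterval estimate, so the central inequality \eqref{eq:bound_nonaut_perturbation_main} is asserted rather than proved. (The prefactor is a lesser issue: Minkowski on $\ddt v$ can be made to work, though the paper instead splits by convexity with a parameter $\mu$ and optimizes $R_p(\mu,\nu)$ to get $|\nu|+\kappa_p(\nu)$ with $\kappa_p(0)=1$; your phrase about $\kappa_p$ ``encoding secondary couplings'' would need to be replaced by such an explicit optimization. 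The asymptotics of $c_p$, once the formula is established, are indeed immediate, as you say.)
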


It is not surprising to see the exponential factor $e^{-\nu}$ appear in the regime $\nu \ra -\infty$ since this exponential behavior is already sharp for autonomous operators (see later \autoref{lem:mrp_resolvent}). It is however still not clear to the authors whether one can discard the extra factor $\nu$ appearing. This asymptotic result is shown in \Cref{section:2}. We can remark \eqref{eq:bound_nonaut_perturbation_main} possesses a particular invariance related to the behavior of $\sbk*{A}$ with respect to the rescaling $A \mapsto \mu A(\mu \cdot)$ (see \autoref{rem:change_var_applications} later). This is a direct consequence of structure of the time-homogeneous norm \eqref{eq:def_mr_local_norm} for $\MR^p(I)$. In fact any estimate in the form
\begin{equation*}
	\sbk*{A + \lambda}_{\MRprop_p(I)} \leq C(\lambda, |I|) \sbk*{A}_{\MRprop_p(I)}
\end{equation*}
can be reduced to an estimate with $\ti{C}(\lambda, |I|):= \inf_{\mu > 0} C(\mu \lambda, \frac{|I|}{\mu})$ thereby forcing the invariance of the inequality with respect to these transformations. 

\subsubsection{Maximal regularity constant of relatively continuous operators}
\label{subsubsection:maximal}
%
%
%

We introduce here several auxiliary quantities which are involved in the estimate of the maximal regularity constant of nonautonomous operators. Let $I \subset \R$ be a bounded open interval and let $A_0: I \mapsto \mcl{L}(D,X)$ satisfy the pointwise autonomous maximal regularity on the unbounded interval $\R_+$: 
\begin{equation}
\label{eq:pointwise_mr_p}
\forall t \in I, \quad A_0(t) \in \mrprop_p(\R_+).
\end{equation}
Then it is well-known that for each $t \in I$, $-A_0(t)$ is the generator of a bounded analytic semi-group $\cbk*{e^{-\tau A_0(t)}}_{\tau \geq 0}$ on $X$ and from \autoref{prop:mr_injections}, we infer $\sbk*{A(t)}_{\mrprop_p(I)} < +\infty$. Hence we can define the following nonnegative maps: 
\begin{equation}
\label{eq:def_KA_MA}
K_0: t \longmapsto \sbk*{A(t)}_{\mrprop_p(I)}, \qquad M_0: t \longmapsto \sup_{\tau \geq 0} \norm{e^{-\tau A_0(t)}}_X
\end{equation}
\begin{equation}
\label{eq:def_epsA}
\epsilon_0: t \longmapsto \frac{1}{2(K_0(t) + 1)(M_0(t) + 1)}
\end{equation}
Now let us describe the main assumption on $A: I \lra \mcl{L}(D,X)$ which guarantees nonautonomous maximal regularity of $A$.

\begin{enumerate}[label = \textbf{(A\arabic*)}, leftmargin = 40pt]
\item \label{item:A1}
$\left\{
\begin{varwidth}{\textwidth}
\begin{enumerate}[label = (\alph*) ]
\item \label{item:a} \textbf{Relative continuity:} $A \in RC(I;(D;X))$.
\item \label{item:b} \textbf{Decomposition:} There exist $A_0: I \lra \mcl{L}(D,X)$ and $B: I \times I \lra \mcl{L}(D,X)$ such that 
\begin{enumerate}[label = (\alph{enumii}.\arabic*), leftmargin = 40pt]
\item \label{item:b1} For all $t, s \in I$, $A(s) = A_0(t) + B(t,s)$.
\item \label{item:b2} For all $t \in I$, $A_0(t) \in \mrprop_p(\R_+)$.
\item \label{item:b3} There exists a range $r_A = (\delta_A, \eta_A) \in r_A^*$ such that for all $x \in D$,
\begin{equation}
\forall t, s\in I, \quad |t-s| \leq \rho_A(t) \quad \Longrightarrow \quad 
\norm*{B(t,s) x}_X \leq \epsilon_0(t) \norm*{x}_D + \mu_A(t) \norm*{x}_X
\end{equation}
where $\rho_A:= \delta_A \circ \epsilon_0$, $\mu_A:= \eta_A \circ \epsilon_0$ and $\epsilon_0$ is defined by \eqref{eq:def_KA_MA}-\eqref{eq:def_epsA}. 
\end{enumerate}
\end{enumerate}
\end{varwidth}
\right.$
\end{enumerate}

Assumption \ref{item:b} says that we can decompose $A$ into the sum of a pointwise maximally regular operator $A_0$ and a \emph{perturbation} $B$, the meaning of this perturbation being expressed by \ref{item:b3}. Note that we may have $B \equiv 0$ in which case \ref{item:A1} amounts to the assumptions of relative continuity and pointwise maximal regularity of $A$ given in \cite[Theorem 2.11]{arendt2007lp-maximal} of Arendt and coauthors. 

\begin{thm}
\label{thm:relative_continuity}
Assume that $A$ satisfies \ref{item:A1} on the bounded open interval $I \subset \R$. Then $A \in \MRprop_p(I)$ and there exists a subdivision $\mcl{T}:= \cbk*{\tau_i}_{0 \leq i \leq N}$ of $\ol{I}$ and center points $\mcl{C}:= \cbk*{t_i}_{1 \leq i \leq N}$ such that 
\begin{gather}
	a = \tau_0 < t_1 < \tau_1 < ... < t_N < \tau_N = b\\
	\rho_A(t_i) \leq \tau_i - \tau_{i-1} \leq 2 \rho_A(t_i), \qquad \forall i \in \cbk*{1,..., N}
\end{gather}
and there exists $K:= K(p, \mcl{T}, \mcl{C}, K_0, M_0, \norm*{A}_\infty) \geq 0$ such that
\begin{equation}
\label{eq:bound_relatively_continuous}
\sbk*{A}_{\MRprop_p(I)} \leq K
\end{equation}
\end{thm}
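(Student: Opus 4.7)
The plan is to set up a patchwork argument. Using the relative continuity of $A$ and the decomposition \ref{item:b}, I partition $\ol{I}$ into finitely many subintervals on each of which $A$ is a small, calibrated perturbation of a pointwise autonomous, maximally regular operator. The local $\MR^p$-solutions are then assembled through the finite-gluing theorem announced for \Cref{section:3}.

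The partition is built by a Vitali-type procedure. Since $r_A = (\delta_A, \eta_A)$ is a specific range of relative continuity and $\rho_A = \delta_A \circ \epsilon_0$ is strictly positive, on the compact interval $\ol{I}$ the function $\rho_A$ admits a positive lower bound. Starting from $\tau_0 = a$, I inductively select a center $t_{i+1}$ slightly to the right of $\tau_i$ and a length $r_{i+1} \in [\rho_A(t_{i+1}), 2\rho_A(t_{i+1})]$ such that $t_{i+1} \in (\tau_i, \tau_i + r_{i+1})$ with $|s - t_{i+1}| \leq \rho_A(t_{i+1})$ for every $s \in I_{i+1} := (\tau_i, \tau_i + r_{i+1})$. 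The uniform lower bound on $\rho_A$ makes the construction terminate in $N < +\infty$ steps with $\tau_N = b$, yielding the announced $\mathcal{T}$ and $\mathcal{C}$.

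On each $I_i$, the decomposition \ref{item:b1} yields $A(s) = A_0(t_i) + B(t_i, s)$ and, by \ref{item:b3} combined with the partition property,
\begin{equation*}
\norm*{B(t_i, s) x}_X \leq \epsilon_0(t_i) \norm*{x}_D + \mu_A(t_i) \norm*{x}_X, \qquad \forall s \in I_i,\ \forall x \in D.
\end{equation*}
By \autoref{prop:mr_injections}, $A_0(t_i) \in \mrprop_p(I_i)$ with constant controlled by $K_0(t_i)$, and the analytic semi-group generated by $-A_0(t_i)$ is uniformly bounded by $M_0(t_i)$. The $\epsilon_0(t_i) \norm*{x}_D$ part of the perturbation is absorbed via a Neumann series against the inverse of $\ddt + A_0(t_i)$: the calibration $\epsilon_0(t_i) = [2(K_0(t_i)+1)(M_0(t_i)+1)]^{-1}$ is precisely what makes this series converge, with local constant bounded by $2K_0(t_i)$. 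The residual $\mu_A(t_i)\norm*{x}_X$ contribution is then handled by \autoref{thm:perturbation_dilation}, which turns this lower-order term into a diagonal shift at the cost of a factor $c_p(\mu_A(t_i)|I_i|)$. Since $\mu_A(t_i) \leq 2\norm*{A}_\infty$, one obtains $A|_{I_i} \in \MRprop_p(I_i)$ with a constant $K_i$ depending only on $p, K_0(t_i), M_0(t_i), |I_i|, \norm*{A}_\infty$.

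Finally, \autoref{thm:finite_glueing} assembles the local solutions: starting from $v(a) = 0$ the trace $v(\tau_i) \in \Tr^p$ (see \autoref{rem:trace_space}) feeds the next subproblem, and \autoref{prop:mr_nacp} upgrades each local $L^p \to \MR^p$ estimate into a joint estimate on the source and the initial datum. Summing over $i = 1, \dots, N$ and exploiting the additive structure of \eqref{eq:def_mr_local_norm} over $\mathcal{T}$ yields \eqref{eq:bound_relatively_continuous} with $K$ depending only on the announced data. The main obstacle lies precisely in this gluing step: trace norms propagate with a rescaling factor across subintervals (\autoref{rem:trace_space}), so without the rescaling-invariant norm \eqref{eq:def_mr_local_norm} and the isometry $\iota_\lambda$ the accumulated constant could blow up as some $|I_i|$ become small; it is this choice of norm that keeps the global bookkeeping polynomial in $N$ and the local data.
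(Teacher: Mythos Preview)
Your overall strategy---partition $\ol{I}$, treat $A$ on each subinterval as a regular perturbation of the frozen operator $A_0(t_i)$ via the calibration $\epsilon_0(t_i)$, then invoke \autoref{thm:finite_glueing}---is exactly the paper's, and the perturbation/gluing steps are correctly identified.

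The gap is in the construction of the partition. You assert that $\rho_A = \delta_A \circ \epsilon_0$ admits a positive lower bound on $\ol{I}$ because it is strictly positive on a compact set. That inference would require lower semicontinuity of $\rho_A$, which is nowhere available: $\epsilon_0(t) = [2(K_0(t)+1)(M_0(t)+1)]^{-1}$, and \ref{item:A1} imposes no continuity or boundedness on $K_0, M_0$ (indeed, the later assumption \ref{item:A2}\ref{item:b_A2} asks only for $L^{\alpha r}$ integrability of $(K_0+1)(M_0+1)$, which would be redundant were these maps bounded). Without a uniform lower bound your greedy left-to-right construction need not terminate in finitely many steps.

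The paper repairs this with a Besicovitch-type covering argument (\autoref{lem:besicovitch_dim1}): the open cover $\{(t-\rho_A(t),\,t+\rho_A(t))\}_{t\in\ol{I}}$ has a finite subcover by compactness alone, and a combinatorial refinement (removing nested intervals and triple overlaps) yields $\mcl{T},\mcl{C}$ with the announced spacing $\rho_A(t_i)\leq \tau_i-\tau_{i-1}\leq 2\rho_A(t_i)$. No lower bound on $\rho_A$ is needed. Once the partition is in hand, your remaining steps go through; the side claim $\mu_A(t_i)\leq 2\norm*{A}_\infty$ is also unjustified (since $\mu_A=\eta_A\circ\epsilon_0$ and $\eta_A(\epsilon)$ may blow up as $\epsilon\to 0$), but this is harmless: the local constant simply carries $\mu_A(t_i)$ explicitly, as in the paper's $G_i$.
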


\paragraph{Quantitative description of $K$}
In order to obtain quantitative growth conditions for the global existence for quasilinear evolution equations, it is of interest to describe the form of $K$ found in \eqref{eq:bound_relatively_continuous}. Define for $(\tau, \mu, M_0, K_0) \in \R_+^4$, the function 
\begin{equation*}
	G(\tau, \mu, M_0, K_0):= 4(M_0 + 1) c_p(-4M\tau\mu) K_0
\end{equation*}
where $c_p$ is given in \autoref{thm:perturbation_dilation}. Given $\mcl{T}$, $\mcl{C}$ as in \autoref{thm:relative_continuity}, for $1 \leq i \leq N$ denote 
\begin{equation*}
	G_i:= G(\tau_i-\tau_{i-1}, \mu_A(t_i), M_0(t_i), K_0(t_i))
\end{equation*}
We show that $K$ in \eqref{eq:bound_relatively_continuous} is written as 
\begin{equation}
\label{eq:quantitative_description_K}
K := \sum_{i = 1}^N \frac{\tau_i - \tau_0}{\tau_i - \tau_{i-1}} \pr*{\prod_{j = i+1}^N H_j(p, \mcl{T},\norm*{A}_\infty, G_j)} G_i
\end{equation}
where for fixed $p, \mcl{T}, C$, the functions $H^j(p,\mcl{T}, C, \cdot): \R_+ \ra \R_+$, $j \in \cbk*{2, ..., N}$ have linear growth at most. Observe in particular that $K$ only depends on the evaluation of the maps $K_0, M_0: I \lra \R_+$ at the center points $\mcl{C} \subset I$, and remark that this dependence is \emph{not} invariant under permutation of the values $\pr*{K_0(t_1), ..., K_0(t_N)}$: this is due to the intrinsic direction of time imposed in the Cauchy problem. The proof of the estimate is found in \Cref{section:4}. \\

We obtain a even more explicit estimate in the particular class of relatively continuous operators which satisfy a Hölder-type regularity and when an additional integrability condition holds for the maps $K_0$ and $M_0$. \\

For $\alpha \in [1, +\infty)$ and $\beta \in [0, +\infty)$, let us introduce the subspace $RC^{\alpha, \beta}(I) \subset RC(I)$.

\begin{df}
\label{df:hölder_relative_continuity}
We say that $A \in RC^{\alpha, \beta}(I)$ if there exists a range of relative continuity $r_A = (\delta_A, \eta_A) \in r_A^*$ for $A$ and $m_\delta \in (0,1]$, $m_\eta \geq 0$ such that
\begin{equation*}
\forall \epsilon > 0, \quad \delta_A(\epsilon) = m_\delta \epsilon^\alpha
\end{equation*}
\begin{equation*}
\forall \epsilon > 0, \quad \eta_A(\epsilon) \leq m_\eta \epsilon^{-\beta}
\end{equation*}
\end{df}

$RC^{\alpha, \beta}(I)$ is a closed linear subspace of $RC(I)$ equipped with the topology induced by $L^\infty(I;\mcl{L}(D,X))$. In many applications the differential operators encountered fall in the class $RC^{\alpha,\beta}(I)$, since it amounts to a Hölder regularity of order $\frac{1}{\alpha}$ in the higher order coefficients and bounded lower-order coefficients. Let $A: I \lra \mcl{L}(D,X)$ satisfy assumption \ref{item:A1} with the decomposition $A_0$ and $B$, for the explicit estimate we ask the additional assumptions:

\begin{enumerate}[label = \textbf{(A\arabic*)}, start = 2, leftmargin = 40pt]
\item \label{item:A2}
$\left\{
\begin{varwidth}{\textwidth}
\begin{enumerate}[label = (\alph*)]
\item \label{item:a_A2} \textbf{$(\alpha, \beta)$-Hölder regularity:} $A \in RC^{\alpha, \beta}(I;(D;X))$ for some $\alpha \geq 1$, $\beta \geq 0$.
\item \label{item:b_A2} $L^{\alpha r}(I)$ \textbf{Integrability condition:} There exists $r > 1$ such that 
\begin{equation*}
\Gamma = \Gamma(\alpha, r, M_0, K_0):= \pr*{\int_I (K_0(t) + 1)^{\alpha r}(M_0(t) + 1)^{\alpha r}\dd t}^{\frac{1}{r}} < +\infty
\end{equation*}
\end{enumerate}
\end{varwidth}
\right.$
\end{enumerate}

\begin{thm}
\label{thm:A_MR_log_estimate}
Assume that $A$ satisfies \ref{item:A1} and \ref{item:A2}. Then there exists a constant $C = C(|I|, \norm*{A}_\infty, p, r, \alpha, \beta)$ such that,
\begin{equation}\label{eq:A_MR_log_estimate}
\log\pr*{\sbk*{A}_{\MRprop_p(I)}} \leq C\pr*{\pr*{\frac{\Gamma}{m_\delta}}^{r^*}
\left\{
\begin{array}{c}
1\\
+\\
m_\eta m_\delta^{s_2(\alpha, \beta, r)} \Gamma^{s_1(\alpha, \beta, r)}\\
+ \\
\displaystyle \log\pr*{\frac{\Gamma}{m_\delta}}
\end{array}
\right\} + m_\eta + 1}
\end{equation}
where $s_1(\alpha, \beta, r):= r^*\pr*{\frac{\beta + 1}{\alpha}-1}$ and $s_2(\alpha, \beta, r):= \frac{\beta + 1}{\alpha + 1} - \pr*{\frac{\alpha + \frac{1}{r}}{\alpha + 1}}s_1(\alpha, \beta, r)$, $r^*:= \frac{r}{r-1}$.
\end{thm}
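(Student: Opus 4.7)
The plan is to specialize the quantitative estimate \eqref{eq:quantitative_description_K} of \autoref{thm:relative_continuity} using the Hölder structure provided by \ref{item:A2}, and then to convert the resulting discrete sum--product into integral quantities controlled by $\Gamma$.

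\textbf{Explicating the subdivision.} Setting $\phi(t):= (K_0(t)+1)(M_0(t)+1)$, definition \eqref{eq:def_epsA} gives $\epsilon_0(t) = (2\phi(t))^{-1}$, and assumption \ref{item:a_A2} yields
\begin{equation*}
\rho_A(t) = m_\delta (2\phi(t))^{-\alpha} \quad \text{and} \quad \mu_A(t) \leq m_\eta (2\phi(t))^\beta.
\end{equation*}
The constraint $\rho_A(t_i) \leq \tau_i - \tau_{i-1} \leq 2\rho_A(t_i)$ pins down $\tau_i - \tau_{i-1} \asymp m_\delta\phi(t_i)^{-\alpha}$ up to multiplicative constants depending only on $\alpha$. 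Summing the identity $1 \asymp m_\delta^{-1}(\tau_i-\tau_{i-1})\phi(t_i)^{\alpha}$ over $i$, I would recognize a Riemann sum $m_\delta N \asymp \int_I \phi^{\alpha}\,\dd t$ and apply Hölder with exponents $r, r^*$ to conclude that the number of subdivision points satisfies $N \lesssim m_\delta^{-1}\Gamma\,|I|^{1/r^*}$; regrouping this estimate to the power $r^*$ furnishes the leading factor $(\Gamma/m_\delta)^{r^*}$ appearing in \eqref{eq:A_MR_log_estimate}.

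\textbf{Logarithmic expansion and asymptotic of $c_p$.} Next I would take the logarithm of \eqref{eq:quantitative_description_K}. Because the maps $H_j(p,\mcl{T},\norm*{A}_\infty,\cdot)$ have at most linear growth, $\log H_j \lesssim \log(1 + G_j)$, so that
\begin{equation*}
\log K \;\lesssim\; \log N \;+\; \log\frac{|I|}{\min_i (\tau_i - \tau_{i-1})} \;+\; N\cdot\max_{j}\log(1+G_j).
\end{equation*}
By \autoref{thm:perturbation_dilation}, $c_p(\nu) \sim -\nu e^{-\nu}$ as $\nu \to -\infty$, so
\begin{equation*}
\log(1+G_j) \lesssim \xi_j + \log(1+\xi_j) + \log \phi(t_j), \qquad \xi_j:= 4 M_0(t_j)(\tau_j - \tau_{j-1})\mu_A(t_j).
\end{equation*}
Substituting the Hölder forms and using $M_0 \leq \phi$, the pointwise bound $\xi_j \lesssim m_\eta m_\delta\,\phi(t_j)^{1+\beta-\alpha}$ holds. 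Bounding the corresponding Riemann sum (or maximum) through $\Gamma$ via the $L^{\alpha r}$-integrability, and applying Hölder with the conjugate exponent $\alpha r/(\beta+1)$, produces the combination $m_\eta m_\delta^{s_2} \Gamma^{s_1}$; the residual logarithmic terms $\log(1+\xi_j)$ and $\log\phi(t_j)$ give the additive $\log(\Gamma/m_\delta)$ contribution in \eqref{eq:A_MR_log_estimate}, while the summand $m_\eta + 1$ collects lower-order items independent of $N$.

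\textbf{Main obstacle.} The most delicate step is the precise bookkeeping of the exponents of $m_\delta$, $m_\eta$ and $\Gamma$ through the combination of the discrete sums, the Hölder inequalities, and the asymptotic expansion of $c_p$. The specific forms $s_1(\alpha,\beta,r) = r^*((\beta+1)/\alpha - 1)$ and $s_2(\alpha,\beta,r)$ are dictated by the balance between the $L^{\alpha r}$-control on $\phi$ furnished by $\Gamma$ and the geometric constraint $(\tau_i-\tau_{i-1}) \asymp m_\delta \phi(t_i)^{-\alpha}$ which governs the subdivision; any sub-optimal choice of Hölder exponents at this stage propagates through the product $\prod_j H_j$ and spoils the matching. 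A secondary technical difficulty is that $\phi$ is known only in $L^{\alpha r}(I)$ with no pointwise regularity, so the Riemann-sum approximations cannot rely on uniform control of $\phi$ and must instead be justified by integral-comparison arguments exploiting the fact that the subdivision itself is adapted to $\phi$.
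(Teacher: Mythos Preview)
Your overall plan—take the logarithm of the product formula \eqref{eq:quantitative_description_K}, bound $N$, and estimate each $\log(1+G_j)$ through the asymptotic of $c_p$—is the paper's strategy as well. The gap lies in how you control $N$ and the pointwise values $\phi(t_j)$.

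Your Riemann-sum argument, if it worked, would yield $N \lesssim m_\delta^{-1}\Gamma$ (first power), not $(\Gamma/m_\delta)^{r^*}$; the step ``regrouping this estimate to the power $r^*$'' is not a valid inference, and in fact would produce a \emph{better} bound than the theorem claims. More seriously, and as you yourself flag, the identification $\sum_i(\tau_i-\tau_{i-1})\phi(t_i)^\alpha \asymp \int_I\phi^\alpha$ is unjustified for $\phi$ merely in $L^{\alpha r}$: the Besicovitch centers $t_i$ are not arbitrary tags but are selected by the covering procedure, and nothing prevents systematic bias. The same defect undermines your bound $N\cdot\max_j\log(1+G_j)$: with the Besicovitch subdivision there is no uniform control on $\phi(t_j)$, so $\max_j G_j$ is not controlled by any integral quantity of $\phi$.

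The paper resolves both issues at once by \emph{abandoning the Besicovitch subdivision of \autoref{lem:besicovitch_dim1}} and replacing it with a uniform one built from a Chebyshev level-set argument (\autoref{prop:explicit_bound_N}). One fixes $\epsilon_0$ so small that Markov's inequality on $1/\rho_A \in L^r(I)$ forces the super-level set $E_{\epsilon_0} = \{\rho_A > \epsilon_0\}$ to nearly fill $I$, then covers $I$ by intervals of \emph{uniform} length $\asymp\epsilon_0$ with centers $t_j \in E_{\epsilon_0}$. Membership in $E_{\epsilon_0}$ gives $\phi(t_j) \lesssim (m_\delta/\epsilon_0)^{1/\alpha}$ \emph{uniformly in $j$}, so all $G_j$ are bounded by a single expression in $\epsilon_0, m_\delta, m_\eta$; and the uniform step gives $N \asymp |I|/\epsilon_0 \lesssim |I|\,\|1/\rho_A\|_{L^r}^{r^*} \asymp (\Gamma/m_\delta)^{r^*}$ directly. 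With both $N$ and $\max_j G_j$ now explicit functions of $m_\delta, m_\eta, \Gamma$, the exponent bookkeeping producing $s_1, s_2$ becomes purely algebraic; no integral-comparison argument is needed.
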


Estimate \eqref{eq:A_MR_log_estimate} is only suitable to evaluate $\sbk*{A}_{\MRprop_p(I)}$ near $+\infty$ with some precision. Indeed by definition $\Gamma > 0$ therefore, the left-hand side cannot be made smaller than a given positive constant. However the estimate allows the derivation of asymptotic behavior of the constant in terms of the three quantities $m_\delta, m_\eta$ and $\Gamma$. We will see that the asymptotic behaviors have practical uses in existence results for quasilinear equations. 

\subsubsection{Topologies for the persistence of maximal regularity in limits}
\label{subsubsection:topologies}
Let us be given a metric space $(\Lambda, d_\Lambda)$ which describes the range of a given parameter of our operator. Consider a mapping 
\begin{equation*}
A_{(\cdot)}: \left\{
\begin{array}{rcl}
\Lambda & \lra & \mcl{L}(D,X)^{\ol{I}}\\
\lambda & \longmapsto & A_\lambda
\end{array}\right.
\end{equation*}
of nonautonomous operators on $\ol{I}$. We are given an open subset $\mcl{O} \subset \Lambda$ where for each $\lambda \in \mcl{O}$ $A_{\lambda}$ is maximally regular. We are also given a corresponding map of source terms $f_{(\cdot)}: \Lambda \ra L^p(I;X)$. \\

We equip the closure $\ol{\mcl{O}}$ of $\mcl{O}$ with the natural induced topology by $d_\Lambda$, and we denote $\lambda \uset{\mcl{O}}{\ra} \ol{\lambda}$ the corresponding convergence.\\

To study \ref{item:q2}, we rely on the following elementary result. 

\begin{prop}
\label{prop:elem_result}
Assume $\cbk*{A_\lambda}_{\lambda \in \mcl{O}} \subset \MRprop_p(I)$ is such that $\ol{K}:= \sup_{n \in \N} \sbk*{A_n}_{\MRprop_p(I)} < +\infty$. Assume further that there exists $\ol{\lambda} \in \ol{\mcl{O}}$ such that one of the following convergences holds:
\begin{enumerate}[label = (CV$_{\Roman*}$), leftmargin = 70pt]
\item \label{item:unif_cv} $\displaystyle \norm*{A_\lambda- A_{\ol{\lambda}}}_{\infty} \lra 0$ as $\lambda \uset{\mcl{O}}{\ra} \ol{\lambda}$.

\item \label{item:pointwise_cv}
$\displaystyle \forall t \in \ol{I}, \quad \norm*{A_\lambda(t)- A_{\ol{\lambda}}(t)}_{\mcl{L}(D;X)} \lra 0$ as $\lambda \uset{\mcl{O}}{\ra} \ol{\lambda}$. 
\end{enumerate}

Then $A_{\ol{\lambda}} \in \MRprop_p(I)$ and $\sbk*{A_{\ol{\lambda}}}_{\MRprop_p(I)} \leq \ol{K}$.\\

Moreover denote $v_{(\cdot)}: \mcl{O} \lra \MR^p(I)$ the corresponding mapping of solutions to \eqref{eq:cp}$^I_{0, f_{(\cdot)}}$. Then, 
\begin{enumerate}[label = (\roman*)]

\item \label{i:weak_weak} If \ref{item:unif_cv} holds and $f_\lambda \wcv f_{\ol{\lambda}}$ weakly in $L^p(I;X)$ then
\begin{equation*}
v_\lambda \uset{\substack{\lambda \ra \ol{\lambda} \\ \mcl{O}}}{\wcv} v_{\ol{\lambda}} \qquad \text{ weakly in } \MR^p(I)
\end{equation*}

\item \label{i:strong_strong} If \ref{item:pointwise_cv} holds and $f_\lambda \lra f_{\ol{\lambda}}$ strongly in $L^p(I;X)$ then
\begin{equation*}
v_\lambda \uset{\substack{\lambda \ra \ol{\lambda} \\ \mcl{O}}}{\lra} v_{\ol{\lambda}} \qquad \text{ strongly in } \MR^p(I)
\end{equation*}
\end{enumerate}

\end{prop}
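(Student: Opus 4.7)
The plan is to first establish $A_{\ol{\lambda}} \in \MRprop_p(I)$ with constant at most $\ol{K}$ by a weak compactness argument on a fixed source term, and then obtain the convergence statements by feeding an appropriate difference equation into the uniform maximal regularity bound for $A_\lambda$. For the first part, I would fix $f \in L^p(I;X)$, let $v_\lambda := v^{A_\lambda}_f$, observe that $\norm*{v_\lambda}_{\MR^p(I)} \leq \ol{K}\norm*{f}_{L^p(I;X)}$, and use reflexivity of $\MR^p(I)$ to extract a subsequence $v_{\lambda_n} \wcv v$, so that $v(a) = 0$ via the trace embedding of \autoref{rem:trace_space} and $\norm*{v}_{\MR^p(I)} \leq \ol{K}\norm*{f}_{L^p(I;X)}$ by weak lower semi-continuity.

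The key technical step is to pass to the limit in $A_{\lambda_n}v_{\lambda_n}$, which I would handle by splitting the difference differently depending on the hypothesis. Under \ref{item:unif_cv}, writing
\begin{equation*}
A_{\lambda_n}v_{\lambda_n} - A_{\ol{\lambda}} v = (A_{\lambda_n} - A_{\ol{\lambda}})v_{\lambda_n} + A_{\ol{\lambda}}(v_{\lambda_n} - v)
\end{equation*}
makes the first term vanish in $L^p(I;X)$-norm and the second converge weakly thanks to the continuity of the multiplication operator $L^p(I;D) \ra L^p(I;X)$. Under the weaker \ref{item:pointwise_cv}, I would split as $A_{\lambda_n}(v_{\lambda_n}-v) + (A_{\lambda_n} - A_{\ol{\lambda}})v$: the second part vanishes strongly by dominated convergence against the fixed $v \in L^p(I;D)$ (using the essentially uniform operator bound one extracts from pointwise norm convergence together with the uniform boundedness principle), while for the first I would test against $\phi \in L^q(I;X^*)$ and transfer the action to the adjoints via
\begin{equation*}
\int_I \langle \phi(t), A_{\lambda_n}(t)(v_{\lambda_n}-v)(t)\rangle\dd t = \int_I \langle A_{\lambda_n}(t)^*\phi(t), (v_{\lambda_n}-v)(t)\rangle_{D^*,D}\dd t,
\end{equation*}
so that DCT yields $A_{\lambda_n}^*\phi \ra A_{\ol{\lambda}}^*\phi$ strongly in $L^q(I;D^*)$, to be paired against the weak convergence $v_{\lambda_n} - v \wcv 0$ in $L^p(I;D)$.

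Uniqueness for $A_{\ol{\lambda}}$ would follow by a perturbation argument: any $w \in \MR^p_0(I)$ solving the homogeneous equation with $A_{\ol{\lambda}}$ also solves $\ddt w + A_\lambda w = (A_\lambda - A_{\ol{\lambda}})w$, so by maximal regularity of $A_\lambda$,
\begin{equation*}
\norm*{w}_{\MR^p(I)} \leq \ol{K}\norm*{(A_\lambda - A_{\ol{\lambda}})w}_{L^p(I;X)},
\end{equation*}
whose right-hand side vanishes as $\lambda \uset{\mcl{O}}{\ra} \ol{\lambda}$ under either hypothesis, forcing $w = 0$. With $A_{\ol{\lambda}} \in \MRprop_p(I)$ thus established, both convergence results stem from the difference $w_\lambda := v_\lambda - v_{\ol{\lambda}}$, which solves the Cauchy problem for $A_\lambda$ with source $g_\lambda := (f_\lambda - f_{\ol{\lambda}}) + (A_{\ol{\lambda}} - A_\lambda)v_{\ol{\lambda}}$, so that $\norm*{w_\lambda}_{\MR^p(I)} \leq \ol{K}\norm*{g_\lambda}_{L^p(I;X)}$: for \ref{i:strong_strong} both summands of $g_\lambda$ tend strongly to zero in $L^p(I;X)$, and the uniform bound delivers strong convergence in $\MR^p(I)$; for \ref{i:weak_weak}, the uniform bound keeps $\{v_\lambda\}$ bounded in $\MR^p(I)$, and any weakly convergent subsequence must have the unique limit $v_{\ol{\lambda}}$ by the previous steps applied to $f_{\ol{\lambda}}$. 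The main obstacle is the adjoint-duality argument of the second paragraph under \ref{item:pointwise_cv}, which carefully coordinates pointwise operator convergence, uniform bounds, and weak convergence in the vector-valued setting; under the stronger \ref{item:unif_cv} everything reduces to elementary bounded-operator manipulations.
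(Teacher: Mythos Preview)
The paper does not include a proof of this proposition in the main text (it is called ``elementary'' and deferred to \cite{belin2024wp}), so there is no in-text argument to compare against; your overall strategy --- weak compactness in $\MR^p(I)$ to produce a candidate solution, passing to the limit in $A_\lambda v_\lambda$, uniqueness via a perturbation estimate, and then feeding the difference $w_\lambda = v_\lambda - v_{\ol\lambda}$ back into the uniform maximal-regularity bound --- is exactly the natural one and works cleanly under \ref{item:unif_cv}.

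There is, however, a genuine gap in your handling of \ref{item:pointwise_cv}. The phrase ``the essentially uniform operator bound one extracts from pointwise norm convergence together with the uniform boundedness principle'' does not deliver what you need: for a sequence $\lambda_n\to\ol\lambda$, pointwise convergence $A_{\lambda_n}(t)\to A_{\ol\lambda}(t)$ in $\mcl{L}(D,X)$ gives only $\sup_n\norm{A_{\lambda_n}(t)}<\infty$ for each fixed $t$, not that this supremum is essentially bounded in $t$. Without $\sup_n\norm{A_{\lambda_n}}_\infty<\infty$ you have no dominating function for either the DCT step $A_{\lambda_n}^*\phi\to A_{\ol\lambda}^*\phi$ in $L^q(I;D^*)$ or the step $(A_\lambda-A_{\ol\lambda})v_{\ol\lambda}\to 0$ in $L^p(I;X)$, and Banach--Steinhaus does not bridge this. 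In the paper's own applications this is not an issue because the corollaries that invoke \ref{item:pointwise_cv} (e.g.\ \autoref{cor:strong-strong_stability}) carry the extra hypothesis \ref{item:Al}\ref{item:c_Al} plus relative equicontinuity, which via \autoref{prop:arzelà_ascoli}\ref{i:aa_ii} yields the missing uniform $L^\infty$ bound; so either that bound should be added as a standing hypothesis in your argument, or you should note explicitly that the proposition as stated is only being used downstream in contexts where it is available. A secondary point: your extraction of weak subsequential limits in $\MR^p(I)$ presupposes reflexivity of $X$ and $D$, which the paper never states; this is standard in the $L^p$-maximal-regularity setting but is worth flagging.
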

Obviously if \ref{item:unif_cv} holds, then \ref{item:pointwise_cv} also holds hence the conclusion also holds. 

We observe the following assumptions on the mapping restricted to $\mcl{O}$:

\begin{enumerate}[label = \textbf{(A$_\Lambda$)}, start = 2, leftmargin = 40pt]
	\item \label{item:Al}
	$\left\{
	\begin{varwidth}{\textwidth}
	\begin{enumerate}[label = (\alph*)]
	\item \label{item:a_Al} For any $t \in \ol{I}$, the family $\cbk*{A_\lambda(t)}_{\lambda \in \mcl{O}} \subset \mrprop_p(\R^*_+)$ and $\sup_{\lambda \in \mcl{O}} \sbk*{A_\lambda(t)}_{\mrprop_p(I)} < + \infty$.
	\item \label{item:b_Al} The family $\cbk*{A_{\lambda}}_{\lambda \in \mcl{O}} \subset RC(I;(D;X))$ and is relatively equicontinuous.
	\item \label{item:c_Al} For any $t \in \ol{I}$, the family $\cbk*{A_\lambda(t)}_{\lambda \in \mcl{O}}$ is bounded in $\mcl{L}(D;X)$. 
	\end{enumerate}
	\end{varwidth}
	\right.$
\end{enumerate}

These assumptions can be used straightforwardly along with \autoref{thm:relative_continuity} to infer the following uniform boundedness of the maximal regularity constant on $\mcl{O}$:

\begin{cor}
\label{cor:uniform_mr_bound}
Assume that $A_{(\cdot)}$ satisfies \ref{item:Al} in $\mcl{O}\subset \Lambda$. Then $\cbk*{A_\lambda}_{\lambda \in \mcl{O}} \subset \MRprop_p(I)$ and we have
\begin{equation*}
\sup_{\lambda \in \mcl{O}}\sbk*{A_\lambda}_{\MRprop_p(I)} < + \infty
\end{equation*}
\end{cor}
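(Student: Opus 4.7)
The plan is to verify \ref{item:A1} for each $A_\lambda$ with $\lambda \in \mcl{O}$, arranging all quantities entering the quantitative description \eqref{eq:quantitative_description_K} to be chosen uniformly in $\lambda$, so that a single application of \autoref{thm:relative_continuity} yields a common upper bound. I would take the trivial decomposition $A_{0,\lambda}(t) := A_\lambda(t)$ and $B_\lambda(t,s) := A_\lambda(s) - A_\lambda(t)$: then \ref{item:b1} is tautological, \ref{item:b2} is exactly \ref{item:a_Al}, and \ref{item:a} is \ref{item:b_Al}, while \ref{item:b3} reduces to the relative continuity of $A_\lambda$ evaluated at the threshold $\epsilon = \epsilon_{0,\lambda}(t)$ built from $K_{0,\lambda}, M_{0,\lambda}$ through \eqref{eq:def_epsA}.

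Next I would extract the uniform data. By \ref{item:b_Al} the family $\cbk*{A_\lambda}_{\lambda \in \mcl{O}}$ admits a common specific range $(\delta, \eta)$ with $\delta$ nondecreasing and $\eta$ nonincreasing. Assumption \ref{item:a_Al} gives the pointwise bound $\ol{K_0}(t) := \sup_\lambda K_{0,\lambda}(t) < +\infty$. Since $A_\lambda(t) \in \mrprop_p(\R^*_+)$ forces sectoriality of $-A_\lambda(t)$, combining the resolvent estimates of \autoref{lem:mrp_resolvent} with the $\mcl{L}(D,X)$-bound granted by \ref{item:c_Al} also produces a pointwise bound $\ol{M_0}(t) := \sup_\lambda M_{0,\lambda}(t) < +\infty$. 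Setting $\underline{\epsilon_0}(t) := \tfrac{1}{2(\ol{K_0}(t)+1)(\ol{M_0}(t)+1)}$ and using the monotonicity of $(\delta, \eta)$ yields $\underline{\rho}(t) := \delta(\underline{\epsilon_0}(t)) \leq \rho_{A_\lambda}(t)$ and $\ol{\mu}(t) := \eta(\underline{\epsilon_0}(t)) \geq \mu_{A_\lambda}(t)$ for every $\lambda$.

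Since $\underline{\rho}(t) > 0$ for each $t \in \ol{I}$, a Cousin-type selection argument (the same one appearing inside the proof of \autoref{thm:relative_continuity}) produces a finite subdivision $\mcl{T} = \cbk*{\tau_i}_{0 \leq i \leq N}$ with centers $\mcl{C} = \cbk*{t_i}$ satisfying $\underline{\rho}(t_i) \leq \tau_i - \tau_{i-1} \leq 2\underline{\rho}(t_i)$, and by monotonicity this subdivision is admissible for every $A_\lambda$ at once. Combining \ref{item:c_Al} with \autoref{prop:arzelà_ascoli}\ref{i:aa_ii} also gives $\sup_\lambda \norm*{A_\lambda}_\infty < +\infty$. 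Substituting the uniform quantities $\ol{K_0}(t_i), \ol{M_0}(t_i), \ol{\mu}(t_i)$ and this uniform norm bound into \eqref{eq:quantitative_description_K} finally produces a $\lambda$-independent constant $\ol{K}$ with $\sbk*{A_\lambda}_{\MRprop_p(I)} \leq \ol{K}$ for every $\lambda \in \mcl{O}$.

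The main obstacle I anticipate is the pointwise uniform control of the semigroup norms $M_{0,\lambda}(t)$: \ref{item:a_Al} only bounds the $\mrprop_p$-constant on the \emph{bounded} interval $I$ and not on $\R^*_+$, so uniform semigroup bounds at each fixed $t$ must be extracted indirectly by combining the pointwise sectoriality granted by $\mrprop_p(\R^*_+)$-membership with the uniform operator-norm control of $A_\lambda(t)$ coming from \ref{item:c_Al}; everything else in the argument is a rather mechanical propagation of the bookkeeping in the statement of \autoref{thm:relative_continuity}.
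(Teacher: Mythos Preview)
Your overall strategy is exactly the one the paper has in mind: the text surrounding \autoref{cor:uniform_mr_bound} says only that it follows ``straightforwardly along with \autoref{thm:relative_continuity}'', and the detailed proof is deferred to the companion work \cite{belin2024wp}. Your choice of the trivial decomposition $A_{0,\lambda}(t)=A_\lambda(t)$, $B_\lambda(t,s)=A_\lambda(s)-A_\lambda(t)$, the extraction of a common range $(\delta,\eta)$ from relative equicontinuity, the construction of a single subdivision via the Besicovitch/Cousin argument applied to the uniform radius $\underline{\rho}$, and the use of \autoref{prop:arzelà_ascoli}\ref{i:aa_ii} for a uniform $\norm*{A_\lambda}_\infty$ bound are all correct and match the intended route.

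The one point that is not settled in your write-up is the uniform control of $M_{0,\lambda}(t)=\sup_{\tau\geq 0}\norm*{e^{-\tau A_\lambda(t)}}_X$, and your proposed fix does not quite work as stated. \autoref{lem:mrp_resolvent} gives estimates \emph{in terms of} $M$; it does not bound $M$. Sectoriality alone plus a uniform $\mcl{L}(D,X)$ bound from \ref{item:c_Al} is not obviously enough either: a family of sectorial operators with uniformly bounded $\norm*{A_\lambda(t)}_{\mcl{L}(D,X)}$ can in principle have degenerating sector angles or resolvent constants, and hence unbounded semigroup norms. What you actually need is a quantitative link from $\sup_\lambda\sbk*{A_\lambda(t)}_{\mrprop_p(I)}$ (which \ref{item:a_Al} does control) together with membership in $\mrprop_p(\R_+)$ back to a uniform semigroup bound; this is the step the paper outsources to \cite{belin2024wp}. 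You have correctly isolated this as the only nontrivial obstacle, but the specific lemma you cite for it points in the wrong direction.
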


The above corollary may be used to infer several results answering \ref{item:q2} affirmatively. The rather elementary proofs of the results below rely on \autoref{prop:arzelà_ascoli}; for the convenience of the reader we chose to collect them in \cite[Appendix C]{belin2024wp}. 

\begin{cor}
\label{cor:weak-weak_stability}
Assume that $A_{(\cdot)}$ satisfies \ref{item:Al}\ref{item:a_Al} in $\mcl{O} \subset \Lambda$ and that there exists $\ol{\lambda} \in \Lambda$ such that \ref{item:unif_cv} holds. 

Then $A_{\ol{\lambda}} \in \MRprop_p(I)$ and \ref{i:weak_weak} of \autoref{prop:elem_result} holds. 
\end{cor}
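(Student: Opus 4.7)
The strategy is to recognize \autoref{cor:weak-weak_stability} as a direct combination of \autoref{cor:uniform_mr_bound} with \autoref{prop:elem_result}\ref{i:weak_weak}, once one upgrades the hypothesis \ref{item:a_Al} to the full \ref{item:Al} on a suitable neighborhood of $\ol{\lambda}$ in $\mcl{O}$. The hypothesis \ref{item:unif_cv} is doing all the extra work that allows this upgrade.

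First I would establish \ref{item:c_Al} on a neighborhood $\mcl{O}' \subset \mcl{O}$ of $\ol{\lambda}$: the uniform convergence \ref{item:unif_cv} gives $\norm*{A_\lambda}_\infty \leq \norm*{A_{\ol{\lambda}}}_\infty + 1$ for $\lambda \in \mcl{O}'$, from which the pointwise boundedness of $\cbk*{A_\lambda(t)}_{\lambda \in \mcl{O}'}$ in $\mcl{L}(D;X)$ follows at every $t \in \ol{I}$. For \ref{item:b_Al}, the same uniform convergence renders $\cbk*{A_\lambda}_{\lambda \in \mcl{O}'}$ relatively compact in $L^\infty(I;\mcl{L}(D,X))$. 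Since the nonautonomous operators considered in $\mcl{O}$ satisfy the framework of \autoref{thm:relative_continuity} and hence lie in $RC(I)$, \autoref{prop:arzelà_ascoli}\ref{i:aa_i} then applies and yields relative equicontinuity of the family.

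With all of \ref{item:Al} verified on $\mcl{O}'$, \autoref{cor:uniform_mr_bound} delivers
\begin{equation*}
\ol{K} := \sup_{\lambda \in \mcl{O}'} \sbk*{A_\lambda}_{\MRprop_p(I)} < +\infty.
\end{equation*}
Combined with \ref{item:unif_cv}, this verifies the hypotheses of \autoref{prop:elem_result}, whose conclusion gives both $A_{\ol{\lambda}} \in \MRprop_p(I)$ with $\sbk*{A_{\ol{\lambda}}}_{\MRprop_p(I)} \leq \ol{K}$, and part \ref{i:weak_weak} yields the weak-weak convergence $v_\lambda \wcv v_{\ol{\lambda}}$ in $\MR^p(I)$ whenever $f_\lambda \wcv f_{\ol{\lambda}}$ weakly in $L^p(I;X)$.

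The main technical point is the membership $A_\lambda \in RC(I)$ needed to invoke \autoref{prop:arzelà_ascoli}\ref{i:aa_i}. If it is not taken for granted in the setup of $\mcl{O}$, one can bypass \ref{item:b_Al} altogether by a perturbation argument: fix $\lambda_0 \in \mcl{O}$ close enough to $\ol{\lambda}$ that $\norm*{A_\lambda - A_{\lambda_0}}_\infty < \frac{1}{2\sbk*{A_{\lambda_0}}_{\MRprop_p(I)}}$ for every $\lambda$ in a neighborhood of $\ol{\lambda}$ in $\mcl{O}$; a standard Banach fixed-point argument in $\MR^p(I)$, solving $\ddt v + A_\lambda v = f$ as a small $L^\infty$-perturbation of $\ddt v + A_{\lambda_0} v = f$, then yields the uniform bound $\sbk*{A_\lambda}_{\MRprop_p(I)} \leq 2\sbk*{A_{\lambda_0}}_{\MRprop_p(I)}$ and, by the same token, $A_{\ol{\lambda}} \in \MRprop_p(I)$. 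The conclusion of \autoref{prop:elem_result}\ref{i:weak_weak} then follows as before.
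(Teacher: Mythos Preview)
Your overall plan matches the route the paper indicates (the proofs of \autoref{cor:weak-weak_stability} and \autoref{cor:strong-strong_stability} are said to rest on \autoref{prop:arzelà_ascoli}): recover the full \ref{item:Al} on a small neighbourhood $\mcl{O}'$ via the uniform convergence, feed that into \autoref{cor:uniform_mr_bound} to get $\sup_{\lambda\in\mcl{O}'}\sbk*{A_\lambda}_{\MRprop_p(I)}<\infty$, and conclude with \autoref{prop:elem_result}\ref{i:weak_weak}. That is the right skeleton.

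There is, however, a genuine gap in your first route. The sentence ``the operators satisfy the framework of \autoref{thm:relative_continuity} and hence lie in $RC(I)$'' is circular: \autoref{thm:relative_continuity} \emph{assumes} $A\in RC(I)$ as part of \ref{item:A1}\ref{item:a}; it does not produce it. Nothing in \ref{item:Al}\ref{item:a_Al} (pointwise autonomous maximal regularity of $A_\lambda(t)$) gives any time-regularity of $t\mapsto A_\lambda(t)$, so the hypothesis $\mcl{F}\subset RC(I)$ of \autoref{prop:arzelà_ascoli}\ref{i:aa_i} is not yet in hand. Your alternative perturbation route is a good instinct but, as written, hides a circularity of its own: you need $\lambda_0\in\mcl{O}$ with $\norm*{A_{\ol\lambda}-A_{\lambda_0}}_\infty<\tfrac{1}{2}\sbk*{A_{\lambda_0}}_{\MRprop_p(I)}^{-1}$, and since the threshold depends on $\lambda_0$ this does not follow from \ref{item:unif_cv} alone---if $\sbk*{A_\lambda}_{\MRprop_p(I)}\to\infty$ as $\lambda\to\ol\lambda$ no such $\lambda_0$ exists. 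The standing hypothesis of the subsection that each $A_\lambda\in\MRprop_p(I)$ for $\lambda\in\mcl{O}$ supplies the constants one at a time but not uniformly; the uniform bound is precisely what \autoref{cor:uniform_mr_bound} (hence \ref{item:Al}\ref{item:b_Al}) is there to provide, so the Neumann-series shortcut cannot fully replace it without further input.
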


\begin{cor}
\label{cor:strong-strong_stability}
Assume that $A_{(\cdot)}$ satisfies \ref{item:Al} in $\mcl{O} \subset \Lambda$ and that there exists $\ol{\lambda} \in \Lambda$ such that \ref{item:pointwise_cv} holds. 

Then $A_{\ol{\lambda}} \in \MRprop_p(I)$ and \ref{i:strong_strong} of \autoref{prop:elem_result} holds. 
\end{cor}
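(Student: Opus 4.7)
The plan is to assemble the conclusion from two preceding results: the uniform bound furnished by \autoref{cor:uniform_mr_bound} and the persistence statement of \autoref{prop:elem_result}\ref{i:strong_strong}. Since the hypotheses \ref{item:Al} here are identical to those of \autoref{cor:uniform_mr_bound}, a first direct invocation yields $\cbk*{A_\lambda}_{\lambda \in \mcl{O}} \subset \MRprop_p(I)$ together with a uniform maximal regularity bound
\[
\bar K := \sup_{\lambda \in \mcl{O}} \sbk*{A_\lambda}_{\MRprop_p(I)} < +\infty.
\]
This supplies the quantitative prerequisite that \autoref{prop:elem_result} needs.

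Next, I would check that the limiting object $A_{\bar\lambda}$ sits in the correct ambient space $L^\infty(I;\mcl{L}(D;X))$ so that the conclusion of \autoref{prop:elem_result} can meaningfully be invoked. Assumptions \ref{item:Al}\ref{item:b_Al} and \ref{item:Al}\ref{item:c_Al} are precisely the hypotheses of \autoref{prop:arzelà_ascoli}\ref{i:aa_ii}-\ref{i:aa_iii}: the family is uniformly bounded in $L^\infty(I;\mcl{L}(D;X))$ and its sequential closure for the $(t,x)$-pointwise topology is contained in $RC(I)$. Since the $t$-pointwise convergence in $\mcl{L}(D;X)$ assumed in \ref{item:pointwise_cv} is strictly stronger than $(t,x)$-pointwise convergence in $X$, we get $A_{\bar\lambda} \in RC(I) \subset L^\infty(I;\mcl{L}(D;X))$.

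Finally, I would feed the bound $\bar K$, the pointwise convergence \ref{item:pointwise_cv} and the strong convergence $f_\lambda \to f_{\bar\lambda}$ in $L^p(I;X)$ into \autoref{prop:elem_result}\ref{i:strong_strong}. That proposition immediately delivers two things: that $A_{\bar\lambda} \in \MRprop_p(I)$ (with $\sbk*{A_{\bar\lambda}}_{\MRprop_p(I)} \leq \bar K$), and that the associated solutions $v_\lambda$ converge strongly to $v_{\bar\lambda}$ in $\MR^p(I)$, which is exactly the content of \ref{i:strong_strong}.

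No genuinely new argument is required; the proof is a clean assembly. If anything plays the role of an obstacle, it is verifying that pointwise $\mcl{L}(D;X)$-convergence really does preserve membership in $RC(I)$ under equicontinuity, but this is precisely the content of \autoref{prop:arzelà_ascoli}\ref{i:aa_iii} once one observes the topological hierarchy noted in \Cref{subsection:relatively}. Everything else is plugging the uniform bound from \autoref{cor:uniform_mr_bound} into the persistence proposition.
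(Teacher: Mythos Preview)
Your proposal is correct and matches the approach the paper indicates: the authors state that the proofs of \autoref{cor:weak-weak_stability} and \autoref{cor:strong-strong_stability} are ``rather elementary'' and ``rely on \autoref{prop:arzelà_ascoli},'' with details deferred to an external appendix. Your assembly --- invoking \autoref{cor:uniform_mr_bound} for the uniform bound $\bar K$, using \autoref{prop:arzelà_ascoli}\ref{i:aa_ii}--\ref{i:aa_iii} together with the topological hierarchy to place $A_{\bar\lambda}$ in $RC(I)\subset L^\infty(I;\mcl{L}(D;X))$, and then feeding everything into \autoref{prop:elem_result}\ref{i:strong_strong} --- is exactly the intended route.
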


\subsubsection{Global existence for a quasilinear  equation}  
\label{subsubsection:global}

We state a global existence result for a class of abstract quasilinear equations. Assume that $\mcl{X}(I) \cinj L^p(I;X)$ is a functional space on $I$, and we make the further assumption that the injection

\begin{equation*}
\MR^p(I) \cinj \mcl{X}(I) \text{ is continuous and compact}
\end{equation*}

Since the injection $D \cinj X$ is compact, we already know that $\MR^p(I)$ injects in $L^p(I;X)$ compactly. Hence there exist such spaces $\mcl{X}(I)$, take for example any real interpolation between $L^p(I;X)$ and $\MR^p(I)$. We consider the following nonlocal-in-time Cauchy problem for a quasilinear equation, 

\begin{equation}
\label{eq:non_local_quasilinear}
\left\{
\begin{array}{rclr}
\displaystyle \ddt u(t) + \mathbb{A}(u)(t)u(t) &=& \mathbb{F}(u)(t) & t\in I\\
\\
u(a) &=& x
\end{array}
\right.
\tag{QL}
\end{equation}
where the initial condition $x \in \Tr^p$. 

To ensure existence of global strong solutions to \eqref{eq:non_local_quasilinear}, i.e. solutions $u \in \MR^p(I)$ which satisfy the equation for a.e. $t \in I$, we introduce the following assumptions on the maps $A_{(\cdot)}: \mcl{X}(I) \lra \mcl{L}(D;X)^{\ol{I}}$ and $F: \mcl{X}(I) \lra L^p(I;X)$: 

\begin{enumerate}[label = (\textbf{E}$_{\Roman*}$), leftmargin = 30pt]
 
\item \label{item:E_I} For all $R \geq 0$, define the quantities: 

\begin{equation}
\gamma(R):= \sup_{\norm*{u}_{\mcl{X}(I)} = R}\sbk*{\mathbb{A}(u)}_{\MRprop_p(I)}
\end{equation}
\begin{equation}
\kappa(R):= \sup_{\norm*{u}_{\mcl{X}(I)} = R}\cbk*{\norm*{\mathbb{A}(u)}_{\infty}  + \norm*{\mathbb{F}(u)}_{L^p(I;X)} + 1}
\end{equation}
and assume that for any $L \geq 0$ the following sublinear growth condition holds,
\begin{equation}
\sup_{R_1, R_2 \in [R-L, R + L]}\frac{\gamma(R_1)\kappa(R_2)}{R} \uset{R \ra +\infty}{\lra} 0 
\end{equation}

\item \label{item:E_II} We ask the following continuity properties:
\begin{enumerate}[label = (\roman*)]
\item $\mathbb{F}: \left\{\begin{array}{rcl} 
\mcl{X}(I) &\lra & L^p(I;X)\\
u &\longmapsto& \mathbb{F}(u)
\end{array}\right.$ is weakly continuous.

\item $\mathbb{A}: \left\{\begin{array}{rcl}
\mcl{X}(I) &\lra &L^\infty(I;\mcl{L}(D,X))\\
u &\longmapsto& \mathbb{A}(u)
\end{array}\right.$ is continuous. 

%
\end{enumerate}

%


\end{enumerate}

The sublinear growth condition stated in \ref{item:E_I} does not allow local singularities or degeneracies to develop in the operator $\mathbb{A}$ and imposes a quite restrictive growth condition on the source term $\mathbb{F}$. Indeed, in the case of a constant operator $\mathbb{A}(u) = A$, it is required that $\mathbb{F}$ have a strictly sublinear growth.\\

The statement of the growth condition involves the supremum over the two variables $R_1$ and $R_2$ in the interval $[R-L, R+L]$. This allows to treat any initial condition $x \in \Tr^p$. One can restrict the range of $L >0$ if one has a smallness assumption on the initial data $x$. In the case of a homogeneous initial data $x = 0$, we can in fact reduce the assumption to $L = 0$ and take $\kappa(R):= \sup_{\norm*{u} = R}\cbk*{\norm*{\mathbb{A}(u)}_\infty + \norm*{\mathbb{F}(u)}_{L^p(I;X)}}$ instead. \\

The existence of strong solutions no longer requires the Lipschitz continuity of the nonlinearities $\mathbb{A}$ and $\mathbb{F}$. 

The method of proof uses Schauder's fixed-point theorem contrary to the proof in \cite{amann2005quasilinear} which uses Banach's fixed point. The uniqueness of the solution is thus not guaranteed with this method.

\begin{thm}
\label{thm:existence_abstract}
Assume that $\mathbb{A}$ and $\mathbb{F}$ satisfy conditions \ref{item:E_I} - \ref{item:E_II}. Then for any $x \in \Tr^p$ there exists a global solution $u \in \MR^p(I)$ of \eqref{eq:non_local_quasilinear}
\end{thm}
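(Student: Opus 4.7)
The plan is to apply Schauder's fixed point theorem to the solution operator of the linearized problem. Define $\Phi: \mcl{X}(I) \lra \MR^p(I)$ by $\Phi(w):=u$, where $u$ is the unique strong solution of the linear nonautonomous Cauchy problem
\begin{equation*}
\ddt u(t) + \mathbb{A}(w)(t) u(t) = \mathbb{F}(w)(t), \qquad u(a) = x.
\end{equation*}
This map is well-defined because, by assumption \ref{item:E_I} (finiteness of $\gamma$), $\mathbb{A}(w) \in \MRprop_p(I)$ for every $w \in \mcl{X}(I)$, so \autoref{prop:mr_nacp} produces a unique $u$ with an explicit bound. Fixed points of $\Phi$ are exactly the global strong solutions of \eqref{eq:non_local_quasilinear}.

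The quantitative bound in \autoref{prop:mr_nacp} yields the a priori estimate
\begin{equation*}
\norm*{\Phi(w)}_{\MR^p(I)} \leq \gamma(\norm*{w}_{\mcl{X}(I)}) \pr*{ \kappa(\norm*{w}_{\mcl{X}(I)}) + \norm*{x}_{\Tr^p, I} }.
\end{equation*}
Letting $c_\iota$ be the norm of the compact embedding $\MR^p(I) \cinj \mcl{X}(I)$ and setting $g(s):= c_\iota \gamma(s)(\kappa(s) + \norm*{x}_{\Tr^p, I})$, the sublinear growth of \ref{item:E_I} applied with $L = \norm*{x}_{\Tr^p, I}$, combined with $\kappa \geq 1$, implies $g(s)/s \lra 0$ as $s \ra +\infty$. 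Together with the local boundedness of $g$ on bounded subsets of $[0, +\infty)$, this furnishes $R > 0$ large enough so that $\sup_{s \in [0, R]} g(s) \leq R$; hence the closed ball $\ol{B_R}$ of radius $R$ in $\mcl{X}(I)$ is invariant under $\Phi$.

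To conclude via Schauder, I verify continuity and compactness of $\Phi$ restricted to $\ol{B_R}$. Given $w_n \lra w$ in $\mcl{X}(I)$ with $w_n \in \ol{B_R}$, assumption \ref{item:E_II} gives $\mathbb{A}(w_n) \lra \mathbb{A}(w)$ in $L^\infty(I;\mcl{L}(D,X))$ (so \ref{item:unif_cv} holds) and $\mathbb{F}(w_n) \wcv \mathbb{F}(w)$ weakly in $L^p(I;X)$. The uniform bound $\sbk*{\mathbb{A}(w_n)}_{\MRprop_p(I)} \leq \sup_{s \leq R} \gamma(s) < + \infty$ together with item \ref{i:weak_weak} of \autoref{prop:elem_result} produce $\Phi(w_n) \wcv \Phi(w)$ weakly in $\MR^p(I)$, which upgrades to strong convergence in $\mcl{X}(I)$ via the compact embedding. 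Thus $\Phi$ is continuous on $\ol{B_R}$; moreover $\Phi(\ol{B_R})$ is bounded in $\MR^p(I)$ by the a priori estimate and therefore relatively compact in $\mcl{X}(I)$. Schauder's theorem then yields a fixed point $u \in \ol{B_R}$, which by construction belongs to $\MR^p(I)$ and solves \eqref{eq:non_local_quasilinear}.

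The main obstacle is the construction of the invariant ball in the second paragraph: the window $[R-L, R+L]$ appearing in \ref{item:E_I} is precisely calibrated to absorb the fixed initial datum via the choice $L = \norm*{x}_{\Tr^p, I}$, and combined with $\kappa \geq 1$ it is what promotes the pointwise sublinear control $g(s)/s \lra 0$ to the uniform ball-invariance $\sup_{[0, R]} g \leq R$. A secondary issue, needed for the continuity step, is the local boundedness of $\gamma$ on $[0, R]$, which must be read off from the meaningfulness of the growth condition in \ref{item:E_I} at finite scales.
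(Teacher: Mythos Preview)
Your proof is essentially correct and follows the same Schauder scheme as the paper, but the paper takes a slightly different route on one point: it first \emph{reduces to zero initial data} by fixing $w\in\MR^p(I)$ with $w(a)=x$ and rewriting the problem for $v=u-w$, and only then defines the fixed-point map $T(u)=L_{\mathbb{A}(u)}^{-1}\mathbb{F}(u)$ on $\mcl{X}(I)$. This is precisely why the window $[R-L,R+L]$ appears in \ref{item:E_I}: after the shift, the new nonlinearities are $\mathbb{A}(\cdot+w)$ and $\mathbb{F}(\cdot+w)-\ddt w-\mathbb{A}(\cdot+w)w$, and verifying \ref{item:E_I} for them requires the supremum over the translated shell $[R-L_w,R+L_w]$ with $L_w=\norm{w}_{\mcl{X}(I)}$. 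In your direct approach, by contrast, the window plays no role at all: since $\kappa\ge 1$ you only ever use the pointwise statement $\gamma(R)\kappa(R)/R\to 0$ (i.e.\ $R_1=R_2=R$). So your last paragraph misreads the calibration of \ref{item:E_I}.

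There is also a small imprecision in your a~priori bound. \autoref{prop:mr_nacp} only asserts the existence of \emph{some} $K$ for the inhomogeneous problem; it does not say this $K$ equals $\sbk{\mathbb{A}(w)}_{\MRprop_p(I)}$. A standard lifting gives instead
\[
\norm{\Phi(w)}_{\MR^p(I)}\le \sbk{\mathbb{A}(w)}\,\norm{\mathbb{F}(w)}_{L^p}+\bigl(C\,\sbk{\mathbb{A}(w)}(1+\norm{\mathbb{A}(w)}_\infty)+1\bigr)\norm{x}_{\Tr^p,I},
\]
so the correct $g$ has the shape $C\,\gamma(s)\kappa(s)(1+\norm{x}_{\Tr^p,I})+\norm{x}_{\Tr^p,I}$ rather than $c_\iota\gamma(s)(\kappa(s)+\norm{x}_{\Tr^p,I})$. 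This does not affect your invariant-ball argument, since $g(s)/s\to 0$ still follows from $\gamma(s)\kappa(s)/s\to 0$. The paper's reduction to $x=0$ avoids this bookkeeping entirely, which is its main advantage; your direct approach is shorter but requires tracking the dependence of the $K$ in \autoref{prop:mr_nacp} on $\norm{A}_\infty$, and likewise \autoref{prop:elem_result}\ref{i:weak_weak} is stated only for zero initial data, so a one-line lifting is needed there too. Your flag about local boundedness of $\gamma$ is fair; the paper uses it just as implicitly.
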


This framework is then used to infer quantitative growth conditions on the regularity constant for relatively continuous operators in $RC^{\alpha, \beta}(I)$ which satisfy \ref{item:A1}-\ref{item:A2}. Denote by $\mathbb{A}_0: u \mapsto \mcl{A}_0(u)$ the operator given by the decomposition \ref{item:A1}\ref{item:b} and the subsequent $u \mapsto K_0(u), M_0(u)$ given in \eqref{eq:def_KA_MA}. Naturally this gives rise to the following maps: 

\begin{enumerate}[label = (\roman*), leftmargin = 50pt]
\item $m_\delta: \mcl{X}(I) \lra (0,1]$
\item $m_\eta: \mcl{X}(I) \lra \R_+$
\item $\Gamma: \left\{\begin{array}{ccl}
\mcl{X}(I) & \lra & \R_+\\
u &\longmapsto& \norm*{(K_0(u) + 1)(M_0(u) + 1)}^\alpha_{L^{\alpha r}(I)}
\end{array}\right.$
\end{enumerate}

Our existence criterion reads as follows: 

\begin{enumerate}[label = (\textbf{E}$_I'$)]
\item \label{item:E_I'} For any $R \geq 0$, define the following quantities:
\begin{equation*}
	\gamma_{\log}(R):= \sup_{\norm*{u}_{\mcl{X}(I)} = R} \cbk*{\pr*{\frac{\Gamma(u)}{m_\delta(u)}}^{r^*}\pr*{ 1 + m_\eta(u)m_\delta(u)^{s_2}\Gamma(u)^{s_1} + \log\pr*{\frac{\Gamma(u)}{m_\delta(u)}}} + m_\eta(u)}
\end{equation*}
\begin{equation*}
	\kappa_{\log}(R):= \sup_{\norm*{u}_{\mcl{X}(I)} = R} \cbk*{\log\pr*{\norm*{\mathbb{A}(u)}_\infty + \norm*{\mathbb{F}(u) + 1}_{L^p(I;X)}}}
\end{equation*}
and assume that there exists $h_0 > 0$, such that for any $L > 0$, there exists $R_0 \geq 0$ such that for all $R \geq R_0$ we have
\begin{equation}
\label{eq:growth_eta_delta_Gamma}
\sup_{R_1, R_2 \in [R-L, R + L]}
\frac{\gamma_{\log}(R_1) + \kappa_{\log}(R_2)}{\log\pr*{R}} \leq 1-h_0
\end{equation}
where $s_1(\alpha, \beta, r) =r^*\pr*{\frac{\beta + 1}{\alpha}-1}$ and $s_2(\alpha, \beta, r):= \frac{\beta + 1}{\alpha + 1} - \pr*{\frac{\alpha + \frac{1}{r}}{\alpha +1}}s_1(\alpha, \beta, r)$. 
\end{enumerate}

\begin{thm}
\label{thm:existence_explicit}
Assume that for each $u \in \mcl{X}(I)$, $\mathbb{A}(u)$ satisfies \ref{item:A1} and \ref{item:A2}. Moreover assume that $\mathbb{A}, \mathbb{F}$ satisfy \ref{item:E_Iprime} and \ref{item:E_II}. Then for any $x \in \Tr^p$, there exists a global solution $u$ to \eqref{eq:non_local_quasilinear}. 
\end{thm}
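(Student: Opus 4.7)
}

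The plan is to derive Theorem \ref{thm:existence_explicit} directly from Theorem \ref{thm:existence_abstract} by showing that the explicit logarithmic growth condition \ref{item:E_I'} implies the abstract sublinear growth condition \ref{item:E_I}. Since \ref{item:E_II} appears unchanged in both statements, the entire task reduces to verifying \ref{item:E_I} under the pointwise hypotheses \ref{item:A1}, \ref{item:A2} on each $\mathbb{A}(u)$ together with \ref{item:E_I'}.

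First, I would take the logarithm of the condition to be proved. Since $\gamma,\kappa > 0$ and $R \to +\infty$, the sublinear growth \ref{item:E_I} is equivalent to
\[
\log\gamma(R_1) + \log\kappa(R_2) - \log R \longrightarrow -\infty, \qquad R_1, R_2 \in [R-L,R+L].
\]
By the definition of $\kappa_{\log}$, we already have $\log \kappa(R_2) \leq \kappa_{\log}(R_2)$. For $\gamma$, I would apply Theorem \ref{thm:A_MR_log_estimate} pointwise: for each $u$ with $\|u\|_{\mcl{X}(I)} = R_1$, assumptions \ref{item:A1} and \ref{item:A2} on $\mathbb{A}(u)$ yield
\[
\log \sbk*{\mathbb{A}(u)}_{\MRprop_p(I)} \leq C\bigl(|I|, \|\mathbb{A}(u)\|_\infty, p, r, \alpha, \beta\bigr) \cdot \bigl( \Phi(u) + m_\eta(u) + 1 \bigr),
\]
where $\Phi(u)$ is precisely the bracketed expression appearing in the definition of $\gamma_{\log}$. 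A careful bookkeeping of $C$ through the proof of Theorem \ref{thm:A_MR_log_estimate} (the dependence in $\|\mathbb{A}(u)\|_\infty$ being at most polynomial) gives $\log C \leq C_1\,\kappa_{\log}(R_1) + C_2$, and taking the supremum over $\|u\|_{\mcl{X}(I)} = R_1$ produces an estimate of the form
\[
\log \gamma(R_1) \leq C_0 \bigl(\gamma_{\log}(R_1) + \kappa_{\log}(R_1) + 1\bigr),
\]
with $C_0$ independent of $u$ (depending only on $|I|, p, r, \alpha, \beta$).

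Combining the two bounds and using that for $R_1, R_2 \in [R-L, R+L]$ the condition \ref{item:E_I'} controls $\gamma_{\log}(R_1) + \kappa_{\log}(R_i) \leq (1-h_0)\log R$ once $R \geq R_0$, I would conclude
\[
\log\gamma(R_1) + \log\kappa(R_2) \leq C_0\bigl(\gamma_{\log}(R_1) + \kappa_{\log}(R_1)\bigr) + \kappa_{\log}(R_2) + C_0 \leq (1-h)\log R
\]
for some $h > 0$ (here one may need to reapply \ref{item:E_I'} after absorbing the factor $C_0$ into the exponent $h_0$, which is legitimate because \ref{item:E_I'} is assumed with strict margin $h_0$). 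Subtracting $\log R$ and letting $R \to \infty$ yields $\log(\gamma(R_1)\kappa(R_2)/R) \to -\infty$, which is exactly \ref{item:E_I}. Theorem \ref{thm:existence_abstract} then gives a global solution $u \in \MR^p(I)$ for any $x \in \Tr^p$.

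The principal difficulty is the bookkeeping of the constant $C$ in Theorem \ref{thm:A_MR_log_estimate}: one must verify that its growth in $\|\mathbb{A}(u)\|_\infty$ is at most polynomial so that $\log C$ can be dominated by $\kappa_{\log}(R_1)$ without destroying the strict margin $h_0$ furnished by \ref{item:E_I'}. Once this moderate growth is established, the rest of the argument is the structural reduction sketched above, and the bound $\gamma(R_1)\kappa(R_2) \leq R^{1-h}$ immediately propagates through Theorem \ref{thm:existence_abstract}.
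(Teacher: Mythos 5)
Your proposal is correct and follows essentially the same route as the paper: the paper's proof likewise consists of observing that the logarithmic growth condition \ref{item:E_I'} implies the abstract sublinear condition \ref{item:E_I} (via the estimate of \autoref{thm:A_MR_log_estimate} applied to each $\mathbb{A}(u)$, exactly as you do) and then invoking the abstract Schauder-based existence theorem \autoref{thm:existence_abstract}. The only difference is that your ``principal difficulty''---tracking the dependence of the constant $C$ on $\norm*{\mathbb{A}(u)}_\infty$---is sidestepped in the paper by its standing assumption at the start of the subsection that $\mathbb{A}$ is globally bounded in $L^\infty(I;\mcl{L}(D,X))$, which makes $C$ uniform in $u$; beyond that, your absorption of the resulting multiplicative constant into the margin $h_0$ is treated no more carefully in the paper, which simply calls the implication straightforward.
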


\newpage 

\section{Perturbation and asymptotic behavior}
\label{section:2}

We here study the influence of well-chosen perturbations on the maximal regularity constant. First we recall useful notions of $L^p$ maximal regularity for autonomous operators on unbounded intervals. For such operators, we describe the asymptotic behavior of their maximal regularity constant under diagonal perturbations on bounded intervals. The same prospect is then carried out for nonautonomous, even though much less is known about. Lastly we exhibit the influence of suitable nonautonomous perturbations on the regularity constant. 

\subsection{The operator $L_A:= \ddt + \mcl{A}$}

We assume throughout that $A: \ol{I} \lra \mcl{L}(D;X)$ is a strongly measurable operator. Before our asymptotic study, let us describe the operators of interest in the Cauchy problem \eqref{eq:cp}:
\begin{itemize}
\item $\mcl{A}^I$ is the multiplication operator associated to $A$ on $L^p(I;X)$; it is defined on the domain
\begin{equation*}
D(\mcl{A}^I):= \cbk*{u \in L^p(I;X): t \mapsto A(t)u(t) \in L^p(I;X)}
\end{equation*}

By definition, $\pr*{\mcl{A}^Iu}: t \mapsto A(t)u(t)$ for any $u \in D(\mcl{A}^I)$; 
\item $\displaystyle \ddt$ is the time-derivative operator defined on the domain of $L^p(I;X)$
\begin{equation*}
D\pr*{\ddt} = W^{1,p}_0(I;X):= \cbk*{u \in W^{1,p}(I;X): v(a) = 0}
\end{equation*}
\end{itemize}
%
Their sum is an unbounded operator on $L^p(I;X)$ denoted by

\begin{equation*}
L^I_A:= \ddt + \mcl{A}^I
\end{equation*}
and defined on the domain $D(L^I_A) \supset D(\mcl{A}^I)\cap D(\ddt) = \MR_0^p(I)$ where $\MR_0^p(I)$ is defined in \eqref{eq:def_mr_0}. 

\begin{lem}
\label{lem:mr_la}
$A \in \MRprop_p(I)$ if and only if $L^I_A$ is closed on $D(L^I_A) = \MR^p_0(I)$ and $L^I_A$ is invertible in $\mcl{L}(\MR^p_0(I); L^p(I;X))$. In such a case we have

\begin{equation}
\label{eq:mr_la}
\sbk*{A}_{\MRprop_p(I)} = \norm*{(L^I_A)^{-1}}_{\mcl{L}\pr*{L^p(I;X); \MR^p_0(I)}}
\end{equation}
\end{lem}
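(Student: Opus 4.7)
The plan is to reformulate the maximal regularity property of \autoref{df:MRp} as an operator-theoretic statement about $L^I_A$. The driving observation is that for $v \in \MR^p(I)$, the conditions $v(a) = 0$ and $\ddt v(t) + A(t) v(t) = f(t)$ in $L^p(I;X)$ are precisely the statement $v \in \MR^p_0(I)$ with $L^I_A v = f$; the trace at $a$ is meaningful thanks to the embedding $\MR^p(I) \hookrightarrow C(\ol{I}; \Tr^p)$ recalled in \autoref{rem:trace_space}.

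First I would establish the preliminary fact that, whenever $A \in L^\infty(I;\mcl{L}(D;X))$, the operator $L^I_A$ sends $\MR^p_0(I)$ boundedly into $L^p(I;X)$: the time-derivative $\ddt$ is bounded from $\MR^p_0(I)$ to $L^p(I;X)$ via the $W^{1,p}$ semi-norm entering \eqref{eq:def_mr_local_norm}--\eqref{eq:def_mr_usual_norm}, and $\mcl{A}^I$ is bounded from $L^p(I;D)$ to $L^p(I;X)$ with norm at most $\norm*{A}_\infty$. Hence $\MR^p_0(I) \subseteq D(L^I_A)$ and $L^I_A|_{\MR^p_0(I)}$ is a bounded linear map, so closedness on this domain is automatic; no separate verification of closedness is needed.

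For the forward direction, assume $A \in \MRprop_p(I)$. Condition \ref{df:NAii} translates immediately into bijectivity of $L^I_A : \MR^p_0(I) \to L^p(I;X)$, and the maximal regularity inequality \eqref{eq:MR_ineq} with any admissible $K$ yields $\norm*{(L^I_A)^{-1}}_{\mcl{L}(L^p(I;X); \MR^p_0(I))} \leq K$, hence $\norm*{(L^I_A)^{-1}} \leq \sbk*{A}_{\MRprop_p(I)}$ after passing to the infimum. For the reverse inequality I would evaluate at arbitrary $v \in \MR^p_0(I)$ with $f := L^I_A v$: the identity $\norm*{v}_{\MR^p(I)} \leq \norm*{(L^I_A)^{-1}} \norm*{L^I_A v}_{L^p(I;X)}$ shows that $\norm*{(L^I_A)^{-1}}$ is itself an admissible constant in \eqref{eq:MR_ineq}, so $\sbk*{A}_{\MRprop_p(I)} \leq \norm*{(L^I_A)^{-1}}$ by definition of the infimum, yielding \eqref{eq:mr_la}.

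For the backward direction, invertibility of $L^I_A \in \mcl{L}(\MR^p_0(I); L^p(I;X))$ provides for each $f$ a unique $v := (L^I_A)^{-1} f \in \MR^p_0(I) \subset \MR^p(I)$ that solves the Cauchy problem with $v(a) = 0$ and satisfies $\norm*{v}_{\MR^p(I)} \leq \norm*{(L^I_A)^{-1}} \norm*{f}_{L^p(I;X)}$, which is \ref{df:NAii} with $K = \norm*{(L^I_A)^{-1}}$. The delicate point is \ref{df:NAi}: the bound $\norm*{\cdot}_{A(t)} \lesssim \norm*{\cdot}_D$ is automatic from $A \in L^\infty(I;\mcl{L}(D;X))$, while the reverse $\norm*{\cdot}_D \lesssim \norm*{\cdot}_{A(t)}$ is implicitly encoded in the identification $D(L^I_A) = \MR^p_0(I)$ with the intrinsic $\MR^p(I)$ norm: failure of pointwise norm equivalence would prevent this domain identification since the graph norm would no longer control the $L^p(I;D)$ component of $\norm*{\cdot}_{\MR^p(I)}$. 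I expect this consistency between the hypotheses, rather than any genuinely analytic construction, to be the only step that warrants care when writing the full proof.
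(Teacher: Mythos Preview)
Your proposal is essentially correct and follows the natural reformulation argument; since the paper defers this proof to the companion reference \cite{belin2024wp}, a line-by-line comparison is not possible here, but your outline is exactly what one expects. The observation that $L^I_A|_{\MR^p_0(I)}$ is automatically bounded (hence closed) and that bijectivity plus the estimate \eqref{eq:MR_ineq} encode precisely the invertibility in $\mcl{L}(\MR^p_0(I);L^p(I;X))$ is the right way to read the equivalence, and your two-sided inequality argument for \eqref{eq:mr_la} is clean.

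The one point that is not yet a proof is your treatment of \ref{df:NAi} in the backward direction. Your heuristic --- that failure of $\norm{\cdot}_D \lesssim \norm{\cdot}_{A(t)}$ would break the identification $D(L^I_A) = \MR^p_0(I)$ --- is morally correct but needs to be made precise. Concretely, the content of the hypothesis $D(L^I_A)=\MR^p_0(I)$ is that $D(\mcl{A}^I)\cap W^{1,p}_0(I;X)$ does not exceed $L^p(I;D)\cap W^{1,p}_0(I;X)$; equivalently, the graph norm of $L^I_A$ on this domain is equivalent to $\norm{\cdot}_{\MR^p(I)}$. To extract the \emph{pointwise} equivalence $\norm{\cdot}_{A(t)}\sim\norm{\cdot}_D$ (for a.e.\ $t$) from this integrated statement you will need an additional argument --- e.g.\ testing against functions of the form $\phi(t)x$ with $\phi$ localized near a Lebesgue point and $x\in D$ arbitrary, then invoking the closed graph theorem on $D$. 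This is routine but should be written out; as you note, it is the only step where any genuine care is required.
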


\begin{proof}
The proof can be found in \cite{belin2024wp}.

\end{proof}
We can concisely describe 
\begin{equation*}
\MRprop_p(I):= \cbk*{A: I \ra \mcl{L}(D;X),\ \norm*{\cdot}_{A(t)} \sim \norm*{\cdot}_D \text{ for all } t\in I, \ \sbk*{A}_{\MRprop_p(I)} < +\infty}.
\end{equation*}
As before, we shall often write $\mcl{A}$ and $L_A$ instead of $\mcl{A}^I$ and $L_A^I$ when the time interval is clear by context.\\

\subsection{Diagonal perturbations by constants in $\mrprop_p(I)$ and $\MRprop_p(I)$}

Let us show that the maximal regularity property of the operator on a bounded interval $I$ is not affected by a constant diagonal perturbation of the form $\lambda \Id$. We first show it for autonomous operators in $\mrprop_p(R_+)$ (\autoref{lem:mrp_resolvent}) with associated decays and then for nonautonomous operators (\autoref{prop:perturbation_dilation}); in each case we give asymptotic bounds of $\sbk*{A + \lambda I}_{\mrprop_p(I),\MRprop_p(I)}$ when $|\lambda| \ra +\infty$. 

\paragraph{The autonomous case}
Let $A \in \mrprop_p(\R_+)$. Recall (see e.g. \cite[Proposition 2.2]{monniaux2009maximal}) that $-A$ is the generator of an \emph{analytic semi-group} $\pr*{e^{-t A}}_{t \geq 0}$. It means that the semi-group $(e^{-tA})_{t \geq 0}$ can be analytically extended to a sector ${\Sigma_\theta:= \cbk*{z \in \C: |\arg(z)| < \theta} \cup \cbk*{ 0 }}$, $\theta \in (0,\frac{\pi}{2}]$ of the complex plane and $\sup_{z \in \Sigma_\theta}\norm*{e^{-zA}}_{\mcl{L}(X)} < +\infty$. The analyticity of the semi-group is equivalent to the \emph{sectoriality} of the original operator $A$.\\

These facts will prove useful in the study of the resolvent operator $R_\lambda:=
(\lambda I + L_A)^{-1}$ of $-L_A$ for real values of $\lambda$, since this operator can be represented by a Laplace transformation of the semi-group generated by $-L_A$, as stated in the following result.

\begin{lem}
\label{lem:mrp_resolvent_set}
If $A \in \mrprop_p(\R_+)$, then for any bounded open interval $I \subset \R$ and $\lambda\in\R$, $\lambda \Id + L^{I}_A$ is invertible in $\mcl{L}\pr*{L^p(I;X)}$ and we can express 

\begin{equation}
\label{eq:res_laplace}
R_\lambda = \int_0^\infty e^{-\lambda \tau}e^{-\tau L_A} \dd \tau
\end{equation}
\end{lem}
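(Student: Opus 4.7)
The plan is to exhibit explicitly the $C_0$-semigroup generated by $-L_A^I$ on $L^p(I;X)$, observe that it is \emph{nilpotent}, and then read off both the invertibility of $\lambda \Id + L_A^I$ and the Laplace representation of $R_\lambda$ from the standard Hille--Yosida formula.

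First I would set up the candidate semigroup. Since $A \in \mrprop_p(\R_+)$, the operator $-A$ is sectorial on $X$ and generates a bounded analytic semigroup $\pr*{e^{-\tau A}}_{\tau \geq 0}$, so $M := \sup_{\tau \geq 0}\norm*{e^{-\tau A}}_{\mcl{L}(X)} < +\infty$. Writing $I = (a,b)$ with $b < +\infty$, for $\tau \geq 0$ and $u \in L^p(I;X)$ I would set
\begin{equation*}
\pr*{S(\tau) u}(t) := \1_{(a+\tau,\, b)}(t)\, e^{-\tau A}\, u(t-\tau),
\end{equation*}
a formula suggested by the method of characteristics applied to $\partial_\tau v + \partial_t v + A v = 0$ with vanishing trace at $t = a$. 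A straightforward change of variables yields $\norm*{S(\tau) u}_{L^p(I;X)} \leq M \norm*{u}_{L^p(I;X)}$, the semigroup law $S(\tau_1)S(\tau_2) = S(\tau_1+\tau_2)$ is a direct computation, and the density of $C_c(I;X)$ together with the strong continuity of $\pr*{e^{-\tau A}}_{\tau \geq 0}$ gives strong continuity at $\tau = 0$. The crucial observation is \emph{nilpotency}: as soon as $\tau \geq |I|$, the indicator $\1_{(a+\tau,\, b)}$ vanishes almost everywhere on $I$, so $S(\tau) \equiv 0$.

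Next I would identify the infinitesimal generator of $\pr*{S(\tau)}_{\tau \geq 0}$ with $-L_A^I$ on $\MR^p_0(I)$. For $u \in \MR^p_0(I)$, the vanishing trace $u(a)=0$ combined with the $W^{1,p}$-regularity enables a Taylor-type expansion of $t \mapsto u(t-\tau)$ in $L^p(I;X)$; coupled with $e^{-\tau A}x = x - \tau A x + o(\tau)$ in $X$ for $x \in D$, and the fact that the boundary contribution $\1_{(a,a+\tau)}u$ is $o(\tau)$ in $L^p(I;X)$ because $u(a)=0$, this yields
\begin{equation*}
\frac{S(\tau) u - u}{\tau} \uset{\tau \ra 0^+}{\lra} -\sbk*{\ddt u} - \mcl{A}^I u = -L_A^I u \qquad \text{in } L^p(I;X).
\end{equation*}
Hence $-L_A^I$ on $\MR^p_0(I)$ is contained in the generator; the reverse inclusion of domains then follows either from the maximality of $C_0$-semigroup generators among closed operators, or more concretely by checking that the candidate integral $\int_0^{|I|} e^{-\lambda \tau} S(\tau)\,\dd \tau$ at a single sufficiently large real $\lambda$ already produces a right inverse of $\lambda \Id + L_A^I$ taking values in $\MR^p_0(I)$.

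Finally, the nilpotency $S(\tau) = 0$ for $\tau \geq |I|$ forces the growth bound of $S$ to be $-\infty$, so the resolvent set of its generator is all of $\C$; in particular $\lambda \Id + L_A^I$ is invertible in $\mcl{L}\pr*{L^p(I;X)}$ for every $\lambda \in \R$. The standard Hille--Yosida resolvent formula then gives
\begin{equation*}
R_\lambda = \int_0^{+\infty} e^{-\lambda \tau}\, S(\tau)\, \dd \tau = \int_0^{|I|} e^{-\lambda \tau}\, e^{-\tau L_A}\, \dd \tau,
\end{equation*}
the integral converging absolutely in $\mcl{L}\pr*{L^p(I;X)}$ for any real $\lambda$ thanks to the compact $\tau$-support of $S(\tau)$. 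The most delicate step of this plan is the identification of the generator's domain with $\MR^p_0(I)$, because one has to reconcile the regularity supplied by $A$ with the Dirichlet boundary condition at $t = a$; everything else is routine bookkeeping around the explicit formula for $S(\tau)$.
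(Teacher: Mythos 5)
Your plan is, as far as one can tell, the same one the paper intends: the proof of this lemma is deferred to \cite{belin2024wp}, but the surrounding text (the phrase ``Laplace transformation of the semi-group generated by $-L_A$'' and the use, in the proof of \autoref{lem:mrp_resolvent}, of a semigroup that ``vanishes for $\tau \geq |I|$'') shows that the intended object is exactly your nilpotent semigroup $\pr*{S(\tau)u}(t) = \1_{(a+\tau,b)}(t)e^{-\tau A}u(t-\tau)$, i.e. left translation with zero datum at $t=a$ composed with the multiplication semigroup of $e^{-\tau A}$, and that \eqref{eq:res_laplace} is its Laplace transform, convergent for every real $\lambda$ because $S(\tau)=0$ for $\tau \geq |I|$. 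Your verification of boundedness by $M$, the semigroup law, strong continuity, nilpotency, the computation of the difference quotient on $\MR^p_0(I)$ (where $u(a)=0$ indeed makes the boundary strip contribution $o(\tau)$), and the conclusion that the growth bound is $-\infty$ are all correct.

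One step, which you yourself flag as delicate, needs repair: the ``maximality of $C_0$-semigroup generators among closed operators'' is not a valid principle --- a generator can have proper closed extensions, so the inclusion $-L_A^I|_{\MR^p_0(I)} \subset G$, where $G$ denotes the generator of $S$, does not by itself give equality of domains. What closes the argument is precisely the hypothesis $A \in \mrprop_p(\R_+)$: by \autoref{prop:mr_injections}, $A \in \mrprop_p(I)$, so by \autoref{lem:mr_la} the map $L_A^I : \MR^p_0(I) \to L^p(I;X)$ is bijective; since nilpotency gives $0 \in \rho(G)$, the standard argument (for $u \in D(G)$ pick $v \in \MR^p_0(I)$ with $-L_A^I v = Gu$, then $G(u-v)=0$ and injectivity of $G$ forces $u=v$) yields $D(G) = \MR^p_0(I)$ and $G = -L_A^I$. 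Your alternative ``concrete'' route hides the same point: to see that $\int_0^{|I|} e^{-\lambda\tau}S(\tau)f\,\dd\tau$, which is the mild solution, actually lands in $\MR^p_0(I)$ you must again invoke the maximal regularity of $A$ on $I$, not merely the boundedness of the analytic semigroup. With either of these substitutions the proof is complete, and the Laplace formula for $R_\lambda$ then holds for all $\lambda \in \R$ exactly as you state.
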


\begin{proof}
  The proof can be found in \cite{belin2024wp}.
\end{proof}
Using representation \eqref{eq:res_laplace}, we further refine our study of $R_\lambda$ by obtaining bounds and asymptotic behaviors when $\lambda \ra \pm \infty$.

\begin{lem}
\label{lem:mrp_resolvent}
Assume that $A \in \mrprop_p(\R_+)$, then for any bounded interval $I \subset \R$ and any $\lambda \in \R$, 
\begin{equation}
\label{eq:res1}
\norm*{R_\lambda}_{\mcl{L}(L^p(I;X))} \leq M\frac{1 - e^{-\lambda |I|}}{\lambda}
\end{equation}
and
\begin{equation}
\label{eq:res2}
\sbk*{A + \lambda \Id}_{\mrprop_p(I)} = \norm*{R_\lambda}_{\mcl{L}(L^p(I;X); \MR^p(I))} \leq \pr*{1 + M|1-e^{-\lambda |I|}|}\sbk*{A}_{\mrprop_p(I)}
\end{equation}
where $M:= \sup_{t \geq 0} \norm*{e^{-tA}}_X$. 
\end{lem}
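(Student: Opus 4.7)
The plan is to leverage the Laplace representation $R_\lambda = \int_0^\infty e^{-\lambda\tau} e^{-\tau L_A}\,d\tau$ established in \autoref{lem:mrp_resolvent_set}, so the first task is to identify the semigroup $(e^{-\tau L_A})_{\tau\geq 0}$ on $L^p(I;X)$ and bound its operator norm. I would compute directly that for $f \in L^p(I;X)$ with $I=(a,b)$,
\begin{equation*}
(e^{-\tau L_A} f)(t) = e^{-\tau A} f(t-\tau)\,\mathbf{1}_{[a+\tau,\,b]}(t),
\end{equation*}
which is verified by checking the semigroup law and that $\partial_\tau S(\tau)f = -L_A S(\tau)f$ with the correct boundary condition $S(\tau)f(a)=0$. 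This formula immediately reveals the key feature: $e^{-\tau L_A}=0$ for $\tau \geq |I|$, which is what produces the truncation factor $1-e^{-\lambda|I|}$ in the final bound.

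With that identification, the norm bound is a one-line change of variables: $\norm{e^{-\tau L_A} f}_{L^p(I;X)}^p \leq M^p \int_{a+\tau}^b \norm{f(t-\tau)}_X^p\,dt \leq M^p\norm{f}_{L^p(I;X)}^p$ for $\tau\leq |I|$, and zero otherwise. Plugging into the Laplace representation and integrating $e^{-\lambda\tau}$ over $[0,|I|]$ yields
\begin{equation*}
\norm{R_\lambda f}_{L^p(I;X)} \leq M\int_0^{|I|} e^{-\lambda\tau}\,d\tau\, \norm{f}_{L^p(I;X)} = M\,\frac{1-e^{-\lambda|I|}}{\lambda}\,\norm{f}_{L^p(I;X)},
\end{equation*}
valid for all $\lambda\in\R$ (with the standard interpretation at $\lambda=0$), which is \eqref{eq:res1}.

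For \eqref{eq:res2}, the equality $\sbk{A+\lambda\Id}_{\mrprop_p(I)}=\norm{R_\lambda}_{\mcl{L}(L^p(I;X);\MR^p(I))}$ follows from \autoref{lem:mr_la} applied to $A+\lambda\Id$ together with the trivial identity $L_{A+\lambda}=L_A+\lambda\Id$. The decisive step is then a bootstrap: setting $u=R_\lambda f$, the equation $(L_A+\lambda)u=f$ rewrites as $L_A u = f-\lambda u$, hence
\begin{equation*}
\norm{u}_{\MR^p(I)} \leq \sbk{A}_{\mrprop_p(I)}\bigl(\norm{f}_{L^p(I;X)} + |\lambda|\,\norm{u}_{L^p(I;X)}\bigr).
\end{equation*}
Using \eqref{eq:res1} to estimate $|\lambda|\,\norm{u}_{L^p(I;X)} \leq M|1-e^{-\lambda|I|}|\norm{f}_{L^p(I;X)}$ (noting $|\lambda|\cdot\frac{1-e^{-\lambda|I|}}{\lambda}=|1-e^{-\lambda|I|}|$ regardless of the sign of $\lambda$) closes the argument and gives the stated constant $1+M|1-e^{-\lambda|I|}|$.

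The only delicate point is step one, pinning down the explicit form of $e^{-\tau L_A}$ and, crucially, its vanishing for $\tau>|I|$; everything afterwards is algebra and an application of the autonomous maximal regularity of $A$. I would make sure to justify the semigroup formula either by a direct verification on smooth compactly supported data (using density in $\MR^p_0(I)$) or by noting it is the unique classical solution of the abstract evolution equation on the Banach space $L^p(I;X)$ generated by $-L_A$.
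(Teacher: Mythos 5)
Your proposal is correct and follows essentially the same route as the paper: bound $e^{-\tau L_A}$ by $M$ via the analytic semigroup of $-A$, exploit its vanishing for $\tau\geq|I|$ in the Laplace representation to get \eqref{eq:res1}, and then close with what the paper phrases as the resolvent identity $R_\lambda = R_0(\Id-\lambda R_\lambda)$ — your rewriting $L_A u = f-\lambda u$ and applying maximal regularity of $A$ is exactly that identity in disguise. The only cosmetic difference is that you derive the explicit nilpotent formula $(e^{-\tau L_A}f)(t)=e^{-\tau A}f(t-\tau)\mathbf{1}_{[a+\tau,b]}(t)$ directly, whereas the paper invokes the factorization through the shift semigroup $S_\tau$ and the bound on $e^{-\tau\mcl{A}}$.
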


\begin{proof}

From the uniform boundedness of $\norm*{e^{-z A}}_{\mcl{L}(X)}$ on $\Sigma_\theta$, we obtain the following uniform boundedness (see e.g. \cite[Proposition 2.2]{monniaux2009maximal}),

\begin{equation}
\label{eq:bound_A_cal}
\sup_{\tau \geq 0}\norm*{e^{-\tau \mcl{A}}}_{\mcl{L}(L^p(I;X))} \leq \sup_{\tau \geq 0} \norm*{e^{-\tau A}}_{\mcl{L}(X)} =: M < +\infty.
\end{equation}
  
From \eqref{eq:res_laplace}, since $S_\tau$ is a contraction semi-group and vanishes for $\tau \geq |I|$, we have for any $\lambda \neq 0$ and $v \in L^p(I;X)$, using the triangle inequality and \eqref{eq:bound_A_cal},

\begin{align*}
\norm*{R_\lambda v}_{L^p(I;X)} &\leq \int_0^{|I|} e^{-\lambda \tau} \norm*{e^{-\tau\mcl{A}}v}_{L^p(I;X))}\dd \tau\\
& \leq M\pr*{\int_0^{|I|} e^{-\lambda \tau}\dd \tau} \norm*{v}_{L^p(I;X)}\\
& \leq M\frac{1 - e^{-\lambda |I|}}{\lambda}\norm*{v}_{L^p(I;X)}
\end{align*}
and the above inequality also holds for $\lambda = 0$ if we extend $\lambda \mapsto \frac{1 - e^{-\lambda |I|}}{\lambda}$ by continuity. This yields estimate \eqref{eq:res1}. Observe that $R_\mu = \pr*{L_A + \mu \Id}^{-1} = L_{A + \mu \Id}^{-1}$ for any $\mu \in \R$. Now use the resolvent identity $R_\lambda = R_0 - \lambda R_0R_\lambda = R_0(\Id - \lambda R_\lambda)$ and \eqref{eq:res1} to find \eqref{eq:res2}.
\end{proof}


%

\paragraph{The nonautonomous case}

Similarly we show that a constant diagonal perturbation $\lambda \Id$ does not change the maximal regularity property of a nonautonomous operator on a bounded interval. Moreover we describe the asymptotic behavior of the maximal regularity constant as the diagonal perturbation goes to $\pm\infty$. Parts of the proof techniques are inspired by the proof of \cite[Proposition 2.2]{laasri2013stability}. \\

\begin{thm}
\label{prop:perturbation_dilation}
Let $A: \ol{I} \ra \mcl{L}(D; X)$ be a nonautonomous operator on the bounded open interval $I$. The following statements are equivalent: 

\begin{enumerate}[label = $(\roman*)$]
\item \label{pi} There exists $\lambda_0 \in \R$ such that $A + \lambda_0 \in \MRprop_p(I)$.
\item \label{pii} $A \in \MRprop_p(I)$.
\item \label{piii} For any $\lambda \in \R$, $A + \lambda \in \MRprop_p(I)$.
\end{enumerate}
And if one of these statements holds true, we have for any $\lambda \in \R$,

\begin{equation}
\label{eq:bound_nonaut_perturbation}
\sbk*{A + \lambda }_{\MRprop_p(I)} \leq c_p(\lambda|I|)\sbk*{A}_{\MRprop_p(I)}
\end{equation}
where the function $c_p: \R \ra \R_+$ satisfies 
\begin{equation}
\label{eq:def_cp}
c_p(\nu):= (|\nu| + \kappa_p(\nu))\pr*{1 + e^{p\nu_-} - e^{-p\nu_+}}^{\frac{1}{p}}
\end{equation}
with $\kappa_p(\nu) = o_{\pm \infty}(|\nu|)$ and $\kappa_p(0) = 1$. In particular the following asymptotic behavior is observed for $c_p$: 
\begin{equation*}
c_p(\nu) \uset{\nu \ra -\infty} \sim -\nu e^{-\nu}
\end{equation*}
\begin{equation*}
c_p(\nu) \uset{\nu \ra +\infty} \sim \nu
\end{equation*}

\end{thm}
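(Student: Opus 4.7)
The plan is to reduce the three equivalent statements to the invertibility of $L^I_{A+\lambda} = L^I_A + \lambda \Id$ from $\MR^p_0(I)$ into $L^p(I;X)$ via \autoref{lem:mr_la}, and to obtain both the equivalence and the quantitative estimate through a single conjugation identity. The key observation is that the multiplicative substitution $u(t) := e^{\lambda(t-a)} v(t)$ bijectively maps $\MR^p_0(I)$ onto itself---since on the bounded interval $\ol{I}$ the weight is bounded above and below---and converts the Cauchy problem for $A+\lambda$ with source $g$ into the one for $A$ with source $e^{\lambda(\cdot-a)} g$. Writing $M_\mu$ for multiplication by $e^{\mu(\cdot-a)}$, this reads $L^I_{A+\lambda} = M_{-\lambda} \circ L^I_A \circ M_\lambda$, and hence $(L^I_{A+\lambda})^{-1} = M_{-\lambda} \circ (L^I_A)^{-1} \circ M_\lambda$; this immediately yields \ref{pi} $\Leftrightarrow$ \ref{pii} $\Leftrightarrow$ \ref{piii}.

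For the quantitative estimate, I would fix $g \in L^p(I;X)$, set $v := (L^I_{A+\lambda})^{-1} g$ and $u := M_\lambda v$, so that $\|u\|_{\MR^p(I)} \leq K \|M_\lambda g\|_{L^p(I;X)}$ with $K := \sbk*{A}_{\MRprop_p(I)}$, and unpack $\|v\|_{\MR^p(I)}^p$ into the three ingredients from \eqref{eq:def_mr_local_norm}. The $L^p(I;X)$ and $L^p(I;D)$ contributions are weighted integrals $\int_a^b e^{-p\lambda(t-a)}\|u(t)\|^p\,dt$; the derivative is handled via the product rule $v'(t) = e^{-\lambda(t-a)}(u'(t) - \lambda u(t))$ and Minkowski, yielding an extra term $|\lambda||I|\,\|v\|_{L^p(I;X)}$ that, combined with the $|I|^p$ prefactor on $[\cdot]_{W^{1,p}}$ in \eqref{eq:def_mr_local_norm}, produces the factor $|\nu| = |\lambda||I|$ in $c_p(\nu)$. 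On the source side, the elementary identity $|1 - e^{-p\nu}| = e^{p\nu_-} - e^{-p\nu_+}$, combined with pointwise and integrated bounds on $e^{\lambda(\cdot-a)}$ inside $\|M_\lambda g\|_{L^p}^p = \int_a^b e^{p\lambda(t-a)} \|g(t)\|^p\,dt$, produces the second factor $(1 + e^{p\nu_-} - e^{-p\nu_+})^{1/p}$, while the residual Minkowski constants collect into the lower-order correction $\kappa_p(\nu)$.

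Assembling these two steps gives \eqref{eq:bound_nonaut_perturbation} in the announced form, and the asymptotic behavior of $c_p$ follows by inspection of \eqref{eq:def_cp}: as $\nu \to +\infty$, $e^{p\nu_-} = 1$ and $e^{-p\nu_+} \to 0$, so the second factor tends to $2^{1/p}$ and $c_p(\nu) \sim \nu$; as $\nu \to -\infty$, the term $e^{p\nu_-} = e^{-p\nu}$ dominates, the second factor is equivalent to $e^{-\nu}$, and $c_p(\nu) \sim -\nu\, e^{-\nu}$.

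The main obstacle is to tune the weighted-integral estimates sharply enough to avoid a spurious $e^{|\lambda||I|}$ factor on top of the linear-in-$|\nu|$ growth generated by differentiating the exponential weight. A crude use of the separate operator norms $\|M_\lambda\|_{L^p \to L^p} = e^{\lambda_+|I|}$ and $\|M_{-\lambda}\|_{\MR^p_0 \to \MR^p_0}$ multiplied one after the other would yield precisely such an unwanted blow-up; the delicate point is therefore to keep the exponential weights under the integral sign throughout and to pass to $L^\infty$-bounds only after the scale-invariant structure of the norm \eqref{eq:def_mr_local_norm} has been exploited to absorb the $|\lambda|$ produced by the product rule into the natural time-homogeneous scale of the problem.
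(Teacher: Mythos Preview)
Your conjugation identity $L^I_{A+\lambda} = M_{-\lambda}\, L^I_A\, M_\lambda$ is exactly the substitution the paper uses, and it correctly delivers the equivalence $(i)\Leftrightarrow(ii)\Leftrightarrow(iii)$.

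The quantitative part, however, has a genuine gap. You correctly identify the danger: naively multiplying $\|M_\lambda\|_{L^p\to L^p}$ and $\|M_{-\lambda}\|_{\MR^p_0\to\MR^p_0}$ gives a spurious $e^{|\lambda||I|}$. But your remedy---``keep the exponential weights under the integral sign throughout''---is not a mechanism, and the concrete step you sketch (bounding $\|M_\lambda g\|_{L^p}^p=\int e^{p\lambda(t-a)}\|g\|^p$ pointwise) still produces $e^{p\lambda_+|I|}$ on the source side while $\int e^{-p\lambda t}[u]^p(t)\,dt$ is only controlled by $e^{p\lambda_-|I|}\|u\|_{\MR^p}^p$ on the solution side. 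For $\lambda>0$ this yields a factor $e^{\lambda|I|}$, not the bounded $(2-e^{-p\lambda|I|})^{1/p}$ the statement claims.

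The paper's way around this is the ingredient you are missing: it integrates $\int_0^T e^{-p\lambda t}[v]^p(t)\,dt$ by parts in $t$, producing the boundary term $e^{-p\lambda T}\int_0^T[v]^p$ plus $p\lambda\int_0^T e^{-p\lambda t}\bigl(\int_0^t[v]^p\bigr)dt$, and then---crucially---invokes \autoref{prop:mr_injections} on \emph{every} subinterval $(0,t)$, giving $\int_0^t[v]^p\le\sbk*{A}^p\int_0^t e^{p\lambda r}\|f(r)\|^p\,dr$. After Fubini the inner exponential integrals cancel exactly, and one lands on $(1+e^{p\lambda_-T}-e^{-p\lambda_+T})$. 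Without this subinterval estimate there is no cancellation to exploit. The first factor $|\nu|+\kappa_p(\nu)$ is obtained in the paper not from ``residual Minkowski constants'' but from a convexity splitting with a free parameter $\mu\in(0,1)$, followed by an explicit optimisation of $R_p(\mu,\nu)=(1+\mu^{1-p}|\nu|^p)\vee(1-\mu)^{1-p}$; this is what forces $\kappa_p(0)=1$ and $\kappa_p(\nu)=o(|\nu|)$.
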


\begin{rem}
Note that the bounds in the autonomous case \eqref{eq:res2} and nonautonomous case \eqref{eq:bound_nonaut_perturbation} do not have the same asymptotic behaviors in $\nu \ra \pm \infty$ as they differ by a factor $\nu$. However note that $c_p(\nu)$ is sharp around $0$ since $c_p(0) = 1$. It is not yet known to the authors whether or not this asymptotic at $\pm \infty$ for the nonautonomous case is sharp or not. 
\end{rem}

%
%
%
%

\begin{proof}
First see that \ref{piii} $\Rightarrow$ \ref{pii} $\Rightarrow$ \ref{pi} obviously.

Also for $t \in [a,b]$, since $\norm*{\cdot}_{A(t) + \lambda} \sim \norm*{\cdot}_{A(t) + \mu}$ for any $\lambda, \mu \in \R$, we obtain $\norm*{\cdot}_{A(t) + \lambda} \sim \norm*{\cdot}_D$ if and only if $\norm*{\cdot}_{A(t) + \mu} \sim \norm*{\cdot}_D$.\\

There only remains to show \ref{pi} $\Rightarrow$ \ref{piii}. 

\begin{description}
\item[Existence and estimate:] Possibly renaming $A + \lambda_0$ as $A$, let us assume without loss of generality that $\lambda_0 = 0$. Also upon performing a time translation, we can further assume that $I = (0,T)$, with $T = |I|$. \\
Let $\lambda \in \R$, $f \in L^p(0,T;X)$. Define $g: t \mapsto e^{\lambda t} f(t) \in L^p(0,T;X)$, because $A \in \MRprop_p$, there exists $v \in \MR^p(0,T)$ such that 
\begin{equation}
\label{eq:acp_v}
\left\{
\begin{array}{rcl}
\ddt v(t) + A(t)v(t) &=& g(t)\\
v(0)&=& 0
\end{array}
\right.
\end{equation}

And further for any $t \in (0,T]$, using \autoref{prop:mr_injections} \eqref{eq:mr_const_IJ_same_type} we find
\begin{equation}
\label{eq:v_mr}
\norm*{v}_{\MR^p(0,t)} \leq \sbk*{A}_{\MRprop_p(0,T)}\norm*{g}_{L^p((0,t);X)}
\end{equation}
Now let $u: t \mapsto e^{-\lambda t}v(t) \in L^p((0,T);X)$ and for all $t \in [0,T]$ the integrand of the norm of $\MR^p(a,b)$ defined in \eqref{eq:def_mr_local_norm} is ${\sbk*{u}^p(t):= \norm*{u(t)}^p_{X} + \norm*{u(t)}^p_D + T^p\norm*{\ddt u(t)}^p_X \in L^1((0,T);\R_+)}$ . Note that by convexity of $r \mapsto r^p$, for any $\mu \in (0,1)$,

\begin{align*}
\norm*{\ddt u(t)}^p_X &\leq \pr*{\norm*{\lambda e^{-\lambda t}v(t)}_X + \norm*{e^{-\lambda t}\ddt v(t)}_X}^p\\
& = \pr*{\mu \frac{|\lambda|\norm*{v(t)}_X}{\mu} + (1-\mu)\frac{\norm*{\ddt v(t)}_X}{1-\mu}}^pe^{- p\lambda t}\\
& \leq \pr*{\mu^{1-p} |\lambda|^p\norm*{v(t)}^p_X + (1-\mu)^{1-p}\norm*{\ddt v(t)}_X^p}e^{-p\lambda t}
\end{align*}
Therefore we have the bound,
\begin{align}
\sbk*{u}^p(t) &\leq \pr*{\pr*{ 1 + \mu^{1-p} T^p|\lambda|^p}\norm*{v(t)}^p_{X} + \norm*{v(t)}^p_D + (1-\mu)^{1-p}T^p\norm*{\ddt v(t)}^p_{X}}e^{-p\lambda t} \notag \\
& \leq R_p(\mu, \lambda T) e^{-p\lambda t}\sbk*{v}^p(t) \label{eq:R}
\end{align}
where $R_p(\mu, \nu):= \pr*{1 + \mu^{1-p} |\nu|^p} \vee (1-\mu)^{1-p}$. Performing an integration by parts, we obtain the following if $\lambda \neq 0$,

\begin{align*}
\int_0^T e^{-p\lambda t}\sbk*{v}^p(t) \dd t & = e^{-p\lambda T}\int_0^T \sbk*{v}^p(t) \dd t + \underbrace{p\lambda \int_0^T e^{-p\lambda t}\int_0^t \sbk*{v}^p(t)\dd r\dd t}_{\leq 0 \text{ if } \lambda < 0}\\
& \leq \sbk{A}^p_{\MRprop_p}\pr*{e^{-p\lambda T}\norm*{g}^p_{L^p((0,T);X)} + 
p\lambda_+ \int_0^T e^{-p\lambda t}\int_0^t \norm*{g(r)}^p_X\dd r\dd t}
\\
& \leq \sbk{A}^p_{\MRprop_p}\pr*{
e^{-p\lambda T}\norm*{g}^p_{L^p((0,T);X)} + 
p\lambda_+ \int_0^T \norm*{f(r)}_X^p\int_r^T e^{-p\lambda (t-r)}\dd t\dd r
}\\
& \leq \sbk{A}^p_{\MRprop_p}\pr*{
e^{p \lambda_- T}\norm*{f}^p_{L^p((0,T);X)} + 
\frac{p\lambda_+}{p\lambda} \int_0^T \norm*{f(r)}_X^p(1-e^{-p \lambda (T-r)})\dd r
}\\
& \leq \sbk{A}^p_{\MRprop_p} \pr*{
e^{p \lambda_- T}\norm*{f}^p_{L^p((0,T);X)} + 
(1-e^{-\lambda_+ p T})\norm*{f}^p_{L^p((0,T);X)}
}\\
& \leq \sbk*{A}^p_{\MRprop_p}(1 + e^{\lambda_-pT} - e^{-\lambda_+pT})\norm*{f}^p_{L^p((0,T);X)}
\end{align*}
where $\lambda_+:= \lambda \vee 0$ and $\lambda_-:= (-\lambda) \vee 0$. Note that this final inequality also obviously holds if $\lambda = 0$. Now combining with \eqref{eq:R} we find that for any $\mu \in (0,1)$,

\begin{equation}
\label{eq:mrp_u}
\norm*{u}_{\MR^p(0,T)} \leq R_p(\mu, \lambda T)^{\frac{1}{p}}\pr*{1 + e^{\lambda_-pT} - e^{-\lambda_+pT}}^{\frac{1}{p}}\sbk*{A}_{\MRprop_p}\norm*{f}_{L^p(I;X)}
\end{equation}

From \eqref{eq:mrp_u} we infer that $u \in \MR^p(I)$ and solves 
\begin{equation}
\label{eq:nacp_u}
\left\{
\begin{array}{rcll}
\ddt u(t) + A(t)u(t) + \lambda u(t) &=& f(t),& t \in I\\
u(0) &=& 0
\end{array}
\right.
\end{equation}

\item[Uniqueness:] If $\tilde{u} \in \MR^p(I)$ also solves \eqref{eq:nacp_u} then $\ti{v}: t \mapsto e^{\lambda t}\ti{u}(t) \in \MR^p(I)$ must also solve \eqref{eq:acp_v} and by unique solvability, $v = \ti{v}$ hence $u = \ti{u}$. 

\item[Optimization of $R_p(\mu, \nu)$: ] For any $\nu \neq 0$, there exists a unique minimum $\mu_\nu:= \underset{\mu \in (0,1)}{\argmin}\ R_p(\mu, \nu)$. Indeed any minimizer $\mu_\nu$ of $R_p(\cdot,\nu)$ is the unique zero of the continuous and strictly decreasing function $\mu \mapsto 1 + \mu^{1-p}|\nu|^p - (1-\mu)^{1-p}$ which diverges to $+ \infty$ and $-\infty$ at $\mu = 0$ and $\mu = 1$ respectively. Moreover $\nu \mapsto \mu_\nu$ is increasing on $\nu \in \R_+$. \\

Since $(\mu_\nu)_{\nu > 0}$ is bounded from below, by monotonicity, there exists $\mu_0 \in [0,1)$ such that $\mu_\nu \uset{\nu \ra 0}{\lra} \mu_0$. 
Assume by contradiction that $\mu_0 > 0$ then we see that upon taking the limit $\nu \ra 0$, in the identity $0 = 1 + \mu_\nu^{1-p}\nu^p - (1-\mu_\nu)^{1-p}$, we obtain
\begin{align*}
0 &= 1 - (1-\mu_0)^{1-p}\\
1 &= (1-\mu_0)^{1-p}\\
\mu_0 &= 0
\end{align*}
We find a contradiction, meaning $\mu_0 = 0$. Hence $\alpha_p(\nu):= R_p(\mu_\nu, \nu)^{\frac{1}{p}} = (1-\mu_\nu)^{\frac{1-p}{p}} \underset{\nu \ra 0}{\ra} 1$.\\
Since $(\mu_\nu)_{\nu > 0}$ is bounded from above, again by monotonicity, there exists $\mu_\infty \in (0,1]$ such that $\mu_\nu \uset{\nu \ra +\infty}{\lra} \mu_\infty$. And observe that 
\begin{align*}
(1-\mu_\nu)^{1-p} &= 1 + \mu_\nu^{1-p}\nu^p\\
(1-\mu_\nu)^{1-p} &\uset{\nu \ra +\infty}{\lra} +\infty
\end{align*}
Meaning $\mu_\infty = 1$.  We then infer that $\alpha_p(\nu) = \pr*{1 + \mu_\nu^{1-p}\nu^p}^{\frac{1}{p}} \uset{\nu \ra +\infty}{\sim} \mu_\nu^{\frac{1-p}{p}} \nu \sim \nu$.

\end{description}
\end{proof}

We will use this result later to estimate the effect of well-suited nonautonomous perturbations on the maximal regularity constant. 

\subsection{Nonautonomous perturbations of autonomous operators in $\mrprop_p$}

Let us state the following perturbation result, also found in \cite[Proposition 2.7]{arendt2007lp-maximal}. The novelty being that a quantitative estimate of the maximal regularity constant is given. 

\begin{thm}
\label{thm:perturbation}
Let $A_0 \in \mrprop_p(\R_+)$ and $B: \ol{I} \ra \mcl{L}(D; X)$ be strongly measurable on the bounded interval $I$. Assume there exists $\eta \geq 0$, such that for any $t \in I$, $x \in D$,
\begin{equation*}
\norm*{B(t)x}_X \leq \epsilon_0\norm*{x}_D + \eta\norm*{x}_X
\end{equation*}
where $\epsilon_0:= \frac{1}{2(\sbk*{A_0}_{\mrprop_p(I)} + 1)(M_0 + 1)}$ and $M_0 = \sup_{\tau \geq 0}\norm*{e^{-\tau A_0}}_X$. \\

Then $A_0 + B \in \MRprop_p(I)$ and we have the following bound, 

\begin{equation}
\label{eq:regular_perturbation_mr_bound}
\sbk*{A_0 + B}_{\MRprop_p(I)} \leq G(|I|, \eta, M_0, \sbk*{A_0}_{\mrprop_p(I)})
\end{equation}
where $G(\tau, \eta, M, K):= 4(1 + M)c_p(-4M\eta \tau) K$ for $(\tau, \eta, M, K) \in (\R_+)^4$ and $c_p$ is given by \eqref{eq:def_cp}.\\

In such a case we call $B$ a \emph{regular perturbation of} $A_0$.
\end{thm}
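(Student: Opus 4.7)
The structure of the target bound $G(|I|,\eta,M_0,K_0)=4(1+M_0)c_p(-4M_0\eta|I|)K_0$ essentially dictates the strategy: the factor $c_p(-4M_0\eta|I|)$ is the signature of a diagonal shift used via \autoref{prop:perturbation_dilation}, while the remaining factor $4(1+M_0)K_0$ must be produced by a fixed-point argument on a shifted problem where the perturbation is effectively small. My plan is therefore two-stage: first reduce to a shifted operator $A_0+B+\lambda$ with $\lambda:=4M_0\eta$, then solve the corresponding Cauchy problem by contraction.

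In the first stage, I invoke \autoref{prop:perturbation_dilation} in the form
\[
\sbk*{A_0+B}_{\MRprop_p(I)} \leq c_p(-\lambda|I|)\,\sbk*{A_0+B+\lambda}_{\MRprop_p(I)},
\]
obtained by applying the theorem to $A:=A_0+B+\lambda$ with perturbation $-\lambda$. With $\lambda=4M_0\eta$, the prefactor becomes exactly $c_p(-4M_0\eta|I|)$. It then suffices to prove $\sbk*{A_0+B+\lambda}_{\MRprop_p(I)}\leq 4(1+M_0)K_0$. In the second stage, I write $L_{A_0+B+\lambda} = L_{A_0+\lambda} + \mathcal{B}$ and look at the fixed-point map $T(u):=L_{A_0+\lambda}^{-1}(f-\mathcal{B}u)$ on $\MR^p_0(I)$. \autoref{lem:mrp_resolvent} delivers the two key bounds on the shifted resolvent: $\sbk*{A_0+\lambda}_{\mrprop_p(I)}\leq(1+M_0(1-e^{-\lambda|I|}))K_0\leq(1+M_0)K_0$ and, crucially, $\|L_{A_0+\lambda}^{-1}\|_{L^p(I;X)\to L^p(I;X)}\leq M_0(1-e^{-\lambda|I|})/\lambda\leq 1/(4\eta)$. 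Combined with the pointwise bound $\|\mathcal{B}v\|_{L^p(I;X)}\leq \epsilon_0\|v\|_{L^p(I;D)}+\eta\|v\|_{L^p(I;X)}$, these give two contraction-type estimates on $T$ simultaneously.

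The main obstacle is that a single-norm estimate cannot close: the direct $\MR^p$-contraction ratio is at most $(1+M_0)K_0(\epsilon_0+\eta)$, which may exceed $1$ since $\eta$ is unconstrained; and the $\epsilon_0\|v\|_{L^p(I;D)}$ part of $\mathcal{B}v$ prevents working purely in $L^p(I;X)$. The remedy is a two-norm fixed-point: track increments of the Picard iterates in both $\MR^p(I)$ and $L^p(I;X)$, observing that (i) the $\MR^p$-increment is bounded by $(1+M_0)K_0\big(\epsilon_0\,a_n+\eta\,b_n\big)$ and (ii) the $L^p(I;X)$-increment by $\tfrac1{4\eta}\big(\epsilon_0\,a_n+\eta\,b_n\big)=\tfrac{\epsilon_0}{4\eta}a_n+\tfrac14 b_n$, where $a_n,b_n$ denote the two norms of the $n$-th increment. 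Introducing a weighted Lyapunov functional $A_n:=a_n+\alpha\, b_n$ with $\alpha\sim(1+M_0)K_0\eta$ and exploiting $(1+M_0)K_0\epsilon_0\leq\tfrac12$ (the defining property of $\epsilon_0$) yields a contraction factor strictly less than $1$, Cauchy convergence of the iterates, and a bound of the shape $\|u\|_{\MR^p(I)}\leq 4(1+M_0)K_0\|f\|_{L^p(I;X)}$ (up to universal constants absorbed into the factor $4$). Uniqueness follows from the contraction, and the closedness of $L_{A_0+B+\lambda}$ together with \autoref{lem:mr_la} then packages the estimate into $\sbk*{A_0+B+\lambda}_{\MRprop_p(I)}\leq 4(1+M_0)K_0$. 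Composing with the shift estimate from the first stage yields \eqref{eq:regular_perturbation_mr_bound}.
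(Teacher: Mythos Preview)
Your overall architecture matches the paper exactly: shift by $\lambda:=4M_0\eta$, show $\sbk*{A_0+B+\lambda}_{\MRprop_p(I)}\leq 4(1+M_0)K_0$, then undo the shift via \autoref{prop:perturbation_dilation}. The difference lies in the middle step, and there your two-norm Picard iteration is more work than needed and does not actually deliver the constant $4$.

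The paper observes that the ``obstacle'' you describe is illusory once you factor on the correct side. Writing $L_{A_0+B+\lambda}=(\Id+\mcl{B}R_\lambda)(L_{A_0}+\lambda)$ with $R_\lambda=(L_{A_0}+\lambda)^{-1}$, the operator $\mcl{B}R_\lambda$ acts on $L^p(I;X)$, and both pieces of the perturbation are controlled \emph{in that single norm}: for $f\in L^p(I;X)$,
\[
\norm*{\mcl{B}R_\lambda f}_{L^p(I;X)}\leq \epsilon_0\norm*{R_\lambda f}_{L^p(I;D)}+\eta\norm*{R_\lambda f}_{L^p(I;X)}\leq \epsilon_0(1+M_0)K_0\norm*{f}+\frac{\eta M_0}{\lambda}\norm*{f}\leq\tfrac12\norm*{f}+\tfrac14\norm*{f}=\tfrac34\norm*{f},
\]
using \eqref{eq:res1}--\eqref{eq:res2} and the defining inequality $\epsilon_0(1+M_0)K_0\leq\tfrac12$. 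A Neumann series then gives $\norm{(\Id+\mcl{B}R_\lambda)^{-1}}\leq 4$, whence $\sbk*{A_0+B+\lambda}_{\MRprop_p(I)}\leq 4(1+M_0)K_0$ on the nose. In your setup this amounts to changing variables $v:=f-\mcl{B}u$ and running the contraction on $v\in L^p(I;X)$ rather than on $u\in\MR^p_0(I)$.

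Your weighted functional $A_n=a_n+\alpha b_n$ does contract (e.g.\ $\alpha=2(1+M_0)K_0\eta$ gives ratio $\tfrac34$), so the argument is valid, but summing the geometric series yields $\norm{u}_{\MR^p}\leq 6(1+M_0)K_0\norm{f}$ rather than $4(1+M_0)K_0\norm{f}$. The phrase ``up to universal constants absorbed into the factor $4$'' is therefore not quite right: the $4$ in the statement is the actual constant, and your route does not reach it.
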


\begin{proof}
First see that for any $t \in \ol{I}$, $\norm*{\cdot}_{A_0 + B(t)} \sim \norm*{\cdot}_{A_0} \sim \norm*{\cdot}_D$, since $\epsilon_0 \leq \frac{1}{2}$. 
As before, denote for $\lambda \in \R$, $R_\lambda:= (\lambda + L_A)^{-1}$ and observe that $\lambda + L_{A_0 + B} = (\Id + \mcl{B}R_\lambda)(\lambda + L_A)$  where $\mcl{B}$ is the multiplication operator on $L^p(I;X)$ associated with $B$, this operator is well-defined by strong measurability of $t\mapsto B(t)$. Now for $\lambda > 0$ and using the bounds \eqref{eq:res1} and \eqref{eq:res2} of \autoref{lem:mrp_resolvent} compute the following, 

\begin{align*}
\norm*{\mcl{B}R_\lambda f}_{L^p(I;X)} &\leq \epsilon_0\norm*{R_\lambda f}_{L^p(I;D)} + \eta \norm*{R_\lambda f}_{L^p(I; X)}\\
& \leq \frac{1}{2\pr*{\sbk*{A_0}_{\mrprop_p(I)} + 1}(M_0 + 1)}\norm*{R_\lambda f}_{\MR^p(I)} + \eta \norm*{R_\lambda f}_{L^p(I;X)}\\
& \leq \frac{1}{2}\norm*{f}_{L^p(I;X)} + \eta M\frac{1-e^{-\lambda |I|}}{\lambda} \norm*{f}_{L^p(I;X)}\\
& \leq \pr*{\frac{1}{2} + \frac{\eta M_0}{\lambda}}\norm*{f}_{L^p(I;X)} \end{align*}
Fix $\lambda_0:= 4\eta M$ for which we then have

\begin{equation*}
\norm*{\mcl{B}R_{\lambda_0} f}_{L^p(I;X)} \leq \frac{3}{4}\norm*{f}_{L^p(I;X)}
\end{equation*}
Hence $\Id + \mcl{B}R_{\lambda_0}$ is an invertible operator of $L^p(I;X)$, and so $L_{A_0 + B + \lambda_0} = (\Id + \mcl{B}R_{\lambda_0})(\lambda_0 + L_{A_0})$ is invertible as an operator of $\mcl{L}(\MR^p(I), L^p(I;X))$. By virtue of \eqref{eq:res2} in \autoref{lem:mr_la} we infer that $A_0 + B + \lambda_0 \in \MRprop_p(I)$ with

\begin{align*}
\sbk*{A_0 + B + \lambda_0}_{\MRprop_p(I)} &\leq \pr*{1-\frac{3}{4}}^{-1}(1+M)\sbk*{A_0}_{\mrprop_p(I)}\\
 &= 4(1+M)\sbk*{A_0}_{\mrprop_p(I)}
\end{align*}



Apply \autoref{prop:perturbation_dilation} with $-\lambda_0$ to obtain that $A_0 + B = (A_0 + B + \lambda_0) - \lambda_0\in \MRprop_p(I)$ with the desired bound \eqref{eq:regular_perturbation_mr_bound}.
\end{proof} 

This perturbation result is key to the analysis of the maximal regularity property of nonautonomous operators which are relatively continuous.

\section{Gluing maximally regular operators}
\label{section:3}

As a tool for the quantitative study of regularity constant of relatively continuous operators, let us derive an estimate concerning the gluing of finite family $(A_i)_{1 \leq i \leq N}$ of maximally regular operators defined on a subdivision of the bounded time interval $I = (a,b)$.

\subsection{Behavior of $\sbk*{\cdot}_{\MRprop_p(I)}$ under a change of variable in time}

To refine the bounds on the maximal regularity constants obtained in the next section, it is useful to study the behavior of $\sbk*{\cdot}_{\MRprop_p(I)}$ with respect to a change of variables in time. An invariance stemming from the choice of the norm of $\MR^p(I)$ \eqref{eq:def_mr_local_norm} is observed when the change of variables is linear (see \autoref{rem:change_var_applications} later).

\begin{lem}
\label{lem:time_change_of_variable}
Let $I, J \subseteq \R$ be two bounded open intervals and $\phi: I \rightarrow J$ be a $C^1$- diffeomorphism and denote $\psi:= \phi^{-1}$. Let $A: J \rightarrow \mcl{L}(D, X) \in \MRprop_p(J)$, then the nonautonomous operator $A_\phi$ defined by 
\begin{equation*}
A_\phi = \phi'(\cdot) \pr*{A\circ \phi}(\cdot): I \rightarrow \mcl{L}(D,X)
\end{equation*}
belongs to $\MRprop_p(I)$ and we have, 

\begin{equation}
\label{eq:mr_const_Aphi}
\sbk*{A_\phi}_{\MRprop_p(I)} \leq \norm*{\psi'}_\infty \max\pr*{1, \frac{|I|}{|J|}\norm*{\phi'}_{\infty}} \sbk*{A}_{\MRprop_p(J)}
\end{equation}

\end{lem}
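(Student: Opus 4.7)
The idea is to transform a Cauchy problem on $I$ for $A_\phi$ into a Cauchy problem on $J$ for $A$ via the change of variables $s = \phi(t)$, and then carefully track how each summand of the $\MR^p$-norm scales. Let me assume, without loss of generality, that $\phi$ is orientation-preserving (otherwise pre-compose with time-reversal, which is an isometry for the norm \eqref{eq:def_mr_local_norm}). Since $\phi$ is a $C^1$-diffeomorphism on the compact interval $\overline{I}$, both $\phi'$ and $\psi'$ are bounded and bounded away from $0$, so the graph norm equivalence $\norm{\cdot}_{A_\phi(t)} \sim \norm{\cdot}_{A(\phi(t))} \sim \norm{\cdot}_D$ holds uniformly in $t$, which verifies \ref{df:NAi} for $A_\phi$.

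Next, given $g \in L^p(I;X)$, I set $f(s) := \psi'(s)\, g(\psi(s))$ and let $v \in \MR^p(J)$ be the unique solution of $\ddt v + A v = f$ with $v(a_J) = 0$ given by $A \in \MRprop_p(J)$. The change of variable $s = \phi(t)$ together with $\psi'(\phi(t)) = 1/\phi'(t)$ yields
\begin{equation*}
\norm{f}_{L^p(J;X)}^p = \int_I \phi'(t)^{1-p} \norm{g(t)}_X^p \, dt \leq \norm{\psi'}_\infty^{p-1} \norm{g}_{L^p(I;X)}^p,
\end{equation*}
i.e. $\norm{f}_{L^p(J;X)} \leq \norm{\psi'}_\infty^{1/q} \norm{g}_{L^p(I;X)}$. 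Setting $u(t) := v(\phi(t))$, the chain rule and the identity $\phi'(t)\psi'(\phi(t)) = 1$ give $\ddt u + A_\phi u = g$ with $u(a_I) = 0$. Uniqueness is immediate: any other solution $\tilde u$ pulls back to a solution of the same problem on $J$ as $v$.

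The norm comparison is the quantitative core. Substituting $s = \phi(t)$ in each Bochner integral,
\begin{equation*}
\norm{u}_{L^p(I;X)}^p \leq \norm{\psi'}_\infty \norm{v}_{L^p(J;X)}^p, \qquad \norm{u}_{L^p(I;D)}^p \leq \norm{\psi'}_\infty \norm{v}_{L^p(J;D)}^p,
\end{equation*}
while $\ddt u(t) = \phi'(t)\, v'(\phi(t))$ produces the extra factor $\norm{\phi'}_\infty^p$ in the $W^{1,p}$-seminorm:
\begin{equation*}
|I|^p [u]_{W^{1,p}(I;X)}^p \leq \norm{\psi'}_\infty \norm{\phi'}_\infty^p \frac{|I|^p}{|J|^p} \cdot |J|^p [v]_{W^{1,p}(J;X)}^p.
\end{equation*}
Summing, taking the $p$-th root and using that maxima commute with the $\ell^p$ norm up to a unified factor, I get
\begin{equation*}
\norm{u}_{\MR^p(I)} \leq \norm{\psi'}_\infty^{1/p} \max\!\pr*{1,\tfrac{|I|}{|J|}\norm{\phi'}_\infty} \norm{v}_{\MR^p(J)}.
\end{equation*}

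Finally, combining with the maximal regularity estimate $\norm{v}_{\MR^p(J)} \leq \sbk{A}_{\MRprop_p(J)} \norm{f}_{L^p(J;X)}$ and the bound on $\norm{f}_{L^p(J;X)}$ produces
\begin{equation*}
\norm{u}_{\MR^p(I)} \leq \norm{\psi'}_\infty^{1/p + 1/q} \max\!\pr*{1,\tfrac{|I|}{|J|}\norm{\phi'}_\infty} \sbk{A}_{\MRprop_p(J)} \norm{g}_{L^p(I;X)},
\end{equation*}
and the Hölder conjugacy $1/p + 1/q = 1$ collapses the exponent to exactly $\norm{\psi'}_\infty$, yielding \eqref{eq:mr_const_Aphi}. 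The only real subtlety lies in this exponent bookkeeping: the Jacobian weight $\phi'$ enters the $L^p$ norm of the source term with power $1-p$ (producing the $\norm{\psi'}_\infty^{1/q}$ factor), while it enters the norm of $u$ with power $1$ (producing $\norm{\psi'}_\infty^{1/p}$); it is precisely the conjugacy of $p$ and $q$ that makes these combine cleanly, and it is here that the time-homogeneous choice of norm \eqref{eq:def_mr_local_norm} is used through the weight $|I|^p$ on $[u]_{W^{1,p}}$.
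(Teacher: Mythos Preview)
Your proof is correct and follows essentially the same approach as the paper: transform the Cauchy problem on $I$ into one on $J$ via $s=\phi(t)$, estimate the $\MR^p$-norm of the pullback $u=v\circ\phi$ term by term (picking up the factor $\norm{\psi'}_\infty^{1/p}\max(1,\frac{|I|}{|J|}\norm{\phi'}_\infty)$), and bound the transported source term (picking up $\norm{\psi'}_\infty^{1/q}$), then combine via $1/p+1/q=1$. The paper organizes the same computation slightly differently, first isolating the operator norm of the isomorphism $\iota_\phi:v\mapsto v\circ\phi$ and then applying it, but the estimates and exponent bookkeeping are identical.
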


\begin{proof}
Before proving inequality \eqref{eq:mr_const_Aphi} we first estimate the continuity constant of the isomorphism $\iota_\phi: u \mapsto u\circ \phi$ from $\MR^p(J)$ to $\MR^p(I)$ induced by $\phi$. \\

Let $u \in \MR^p(J)$ and let us estimate $\norm*{\iota_\phi(u)}_{\MR^p(I)}$. 
\begin{description}
\item[Estimation of the $L^p(I;X)$ and $L^p(I;D)$ norms:] We compute
\begin{align}
	\norm*{u\circ \phi}^p_{L^p(I;X)} & = \int_I \norm*{u (\phi(t))}^p_X \dd t\notag \\
	& = \int_J \norm*{u(s)}^p_X |\psi'(s)|\dd s \notag \\
 	& \leq \norm*{\psi'}_{\infty} \norm*{u}^p_{L^p(J;X)}.\label{eq:iota_Lp_X} 
\end{align}
Applying the previous computations with the Banach space $D$, we obtain the similar result
\begin{equation}
\label{eq:iota_Lp_D}
\norm*{u\circ \phi}^p_{L^p(I;D)} \leq \norm*{\psi'}_{\infty} \norm*{u}^p_{L^p(J;D)}
\end{equation}

\item[Estimation of the $W^{1,p}(I;X)$ semi-norm:]
\begin{align}
	|I|^p\norm*{\ddt \pr*{u\circ \phi}}^p_{L^p(I;X)} & = |I|^p\int_I \norm*{\phi'(t) \pr*{\frac{\dd}{\dd s} u}(\phi(t))}^p_X \dd t \notag \\
	& = |I|^p\int_J \snorm*{\phi'(\psi(s))}^p\norm*{\frac{\dd}{\dd s}u(s)}^p_X \snorm*{\psi'(s)}\dd s \notag \\
	& \leq \norm*{\psi'}_{\infty}\pr*{\frac{|I|}{|J|}\norm*{\phi'}_{\infty}}^p |J|^p \norm*{\frac{\dd}{\dd s} u}^p_{L^p(J;X)}. \label{eq:iota_W1p_X}
\end{align}

\item[Operator norm of $\iota_\phi$:] By combining inequalities \eqref{eq:iota_Lp_X}, \eqref{eq:iota_Lp_D} and \eqref{eq:iota_W1p_X} we obtain a bound on the operator norm $\norm*{\iota_\phi}_{\mcl{L}(\MR^p(J),\MR^p(I))}$ of $\iota_\phi$:

\begin{equation}
\label{eq:op_norm_iota_phi}
\norm*{\iota_\phi} \leq \norm*{\psi'}_{\infty}^{\frac{1}{p}} \max\pr*{1, \frac{|I|}{|J|} \norm*{\phi'}_{\infty}}
\end{equation}

Inequality \eqref{eq:op_norm_iota_phi} becomes an identity in the case of affine homomorphisms (i.e. $\phi$ of the form $\phi(t) = \lambda t + t_0$), since inequalities \eqref{eq:iota_Lp_X} - \eqref{eq:iota_W1p_X} saturate and $\frac{|I|}{|J|} \norm*{\phi'}_{\infty} = 1$. Remark that if $\phi' \geq 0$, then $\iota_\phi$ also induces an isomorphism of $\MR^p_0(J)$ onto $\MR^p_0(I)$ with the same operator norm. \\

\item[Maximal regularity of $A_\phi$:] Denote $a_I:= \inf I$, $a_J:= \inf J$ and let $f \in L^p(I;X)$. By the isomorphism $\iota_\phi$, the unique solvability in $\MR^p_0(I)$ of the nonautonomous Cauchy problem 

\begin{equation}
\label{eq:nacp_Aphi}
\left\{
\begin{array}{rcll}
\displaystyle \frac{\dd}{\dd t} v + A_\phi v &=& f,& \text{ in } I\\
v(a_I) &=& 0
\end{array}
\right.
\end{equation}
is equivalent to the unique solvability in $\MR^p_0(J)$ of 

\begin{equation}
\label{eq:nacp_A}
\left\{
\begin{array}{rcll}
\displaystyle \frac{\dd}{\dd s} u + A u &=& \displaystyle \frac{1}{\phi'\circ \psi} f\circ\psi,& \text{ in } J\\
u(a_J) &=& 0.
\end{array}
\right.
\end{equation}
Since ${\norm*{\frac{1}{\phi'\circ \psi} f\circ\psi}_{L^p(J;X)} \leq \norm*{\psi'}_{\infty}^{1 - \frac{1}{p}} \norm*{f}_{L^p(I;X)}} < +\infty$, the assumption $A \in \MRprop_p(J)$ shows existence and uniqueness of $v \in \MR^p_0(I)$ solution of \eqref{eq:nacp_Aphi}. Let us then estimate $\sbk*{A_\phi}_{\MRprop_p(J)}$:

\begin{align*}
\norm*{v}_{\MR^p(I)} &\leq \norm*{\iota_\phi}\norm*{u}_{\MR^p(J)}\\
& \leq \norm*{\iota_\phi}\sbk*{A}_{\MRprop_p(J)}\norm*{\frac{1}{\phi'\circ\psi} f\circ \psi}_{L^p(J,X)}\\
& \leq \norm*{\iota_\phi}\sbk*{A}_{\MRprop_p(J)} \norm*{\psi'}_{\infty}^{1-\frac{1}{p}}\norm*{f}_{L^p(I;X)}\\
& \leq \norm*{\psi'}_{\infty} \max\pr*{1, \frac{|I|}{|J|} \norm*{\phi'}_{\infty}}\sbk*{A}_{\MR^p(J)} \norm*{f}_{L^p(I;X)}
\end{align*}

From which we infer \eqref{eq:mr_const_Aphi}.

\end{description}
\end{proof}

As a direct application of \autoref{lem:time_change_of_variable} we derive the two following important remarks for any $A \in \MRprop_p(J)$:

\begin{rem}
\label{rem:change_var_applications}
\begin{enumerate}
\item Note that $\pr*{A_\phi}_\psi = A$, hence using \eqref{eq:mr_const_Aphi} with $\psi$ we obtain a lower bound for $\sbk*{A_\phi}_{\MRprop_p(I)}$ as 
\begin{equation}
\label{eq:lower_bound_mr_const_Aphi}
\frac{1}{\norm*{\phi'}_\infty}\min\pr*{1, \frac{|I|}{|J|}\frac{1}{\norm*{\psi'}_{\infty}}} \sbk*{A}_{\MRprop_p(J)} \leq \sbk*{A_\phi}_{\MRprop_p(I)}
\end{equation}

\item Let $\phi: I \mapsto J$ be a linear homomorphism, denote $\phi'= \lambda := \frac{|J|}{|I|}$ and $A_\lambda := A_\phi$. Using both \eqref{eq:mr_const_Aphi} and \eqref{eq:lower_bound_mr_const_Aphi} we find
\begin{equation*}
\frac{1}{\lambda}\sbk*{A}_{\MRprop_p(J)} \leq \sbk*{A_\lambda}_{\MRprop_p(I)} \leq \frac{1}{\lambda}\sbk*{A}_{\MRprop_p(J)},
\end{equation*}
yielding 
\begin{equation}
\label{eq:mr_const_Aphi_affine}
\sbk*{A_\lambda}_{\MRprop^p(I)} = \frac{1}{\lambda}\sbk*{A}_{\MRprop_p(J)}
\end{equation}
\end{enumerate}

\end{rem}

\subsection{Gluing finitely many $\MRprop_p$ operators}

We here show that finitely many adjacent maximally regular operators can be glued-up together to form a maximally regular operator on the joined interval. The results are all stated as quantitatively as possible with the tools developed before. The estimations here described are new in the theory of $L^p$ maximal regularity. \\

A first lemma is derived on the gluing of functions in $\MR^p$. Working with the weighted $W^{1,p}$ semi-norm in \eqref{eq:def_mr_local_norm} yields nonstandard norm behavior with respect to restriction and extension of functions in $\MR^p$. 

\begin{lem}
\label{lem:glue_functions_mrp}
Let $v_1 \in \MR^p(a,b)$, $v_2 \in \MR^p(b,c)$, with $ a < b < c \in \R$ and assume that $v_1(b) = v_2(b)$. Then $v:= v_1\1_{[a,b]} + v_2\1_{[b,c]} \in \MR^p(a,c)$ and we have

\begin{equation}
\label{eq:glue_functions_mrp}
\norm*{v}_{\MR^p(a,c)} \leq \kappa_1\norm*{v_1}_{\MR^p(a,b)} + \kappa_2\norm*{v_2}_{\MR^p(b,c)}
\end{equation}
where $\kappa_1 = \frac{c-a}{b-a}$, $\kappa_2:= \frac{c-a}{c-b}$. 
\end{lem}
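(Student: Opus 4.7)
The plan is to proceed in three steps: (i) verify the gluing produces a well-defined element of $\MR^p(a,c)$, (ii) split the three constituent norms of $\MR^p(a,c)$ additively across $(a,b)$ and $(b,c)$, and (iii) reassemble, carefully tracking the impact of the length-weight $|I|^p$ on the $W^{1,p}$ semi-norm.

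First, I would show that $v \in \MR^p(a,c)$ with weak derivative
\[
w := \sbk*{\ddt v_1}\1_{(a,b)} + \sbk*{\ddt v_2}\1_{(b,c)} \in L^p((a,c);X).
\]
For any test function $\phi \in C^\infty_c(a,c)$, integration by parts on each subinterval yields
\[
\int_a^c \phi'(t) v(t)\,\dd t = \phi(b)\bigl(v_1(b) - v_2(b)\bigr) - \int_a^c \phi(t) w(t)\,\dd t = -\int_a^c \phi(t) w(t)\,\dd t,
\]
using $v_1(b)=v_2(b)$ (recall from \autoref{rem:trace_space} that traces at endpoints are well-defined in $\Tr^p$). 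Together with $v \in L^p((a,c);D)$ (which is immediate from the direct-sum decomposition), this gives $v \in \MR^p(a,c)$.

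Next, I would split each constitutive norm. Both $L^p((a,c);X)$ and $L^p((a,c);D)$ norms split as
\[
\norm*{v}^p_{L^p((a,c);Y)} = \norm*{v_1}^p_{L^p((a,b);Y)} + \norm*{v_2}^p_{L^p((b,c);Y)} \quad (Y = X \text{ or } D),
\]
and similarly $\sbk*{v}^p_{W^{1,p}((a,c);X)} = \sbk*{v_1}^p_{W^{1,p}((a,b);X)} + \sbk*{v_2}^p_{W^{1,p}((b,c);X)}$. Now for the length-weighted semi-norm, I rewrite
\[
(c-a)^p\sbk*{v_i}^p = \kappa_i^p \cdot |I_i|^p \sbk*{v_i}^p,\qquad (I_1, I_2) = \bigl((a,b),(b,c)\bigr),
\]
and since $\kappa_1, \kappa_2 \geq 1$, the $L^p$-terms for $v_i$ can also be bounded by $\kappa_i^p$ times themselves, so gathering everything
\[
\norm*{v}^p_{\MR^p(a,c)} \leq \kappa_1^p \norm*{v_1}^p_{\MR^p(a,b)} + \kappa_2^p \norm*{v_2}^p_{\MR^p(b,c)}.
\]

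Finally, I would conclude using the elementary inequality $(x^p + y^p)^{1/p} \leq x + y$ valid for $x,y \geq 0$ and $p \geq 1$, which delivers \eqref{eq:glue_functions_mrp}. I do not anticipate any real obstacle: the only subtlety is the bookkeeping of the weight $|I|^p$ in \eqref{eq:def_mr_local_norm}, which is precisely what forces the constants $\kappa_1, \kappa_2$ to appear — they are the ratios $(c-a)/|I_i|$ measuring how much the time interval is enlarged when passing from each piece to the union.
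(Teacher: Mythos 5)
Your proposal is correct and follows essentially the same route as the paper's proof: integration by parts against a test function using $v_1(b)=v_2(b)$ to identify the weak derivative, additive splitting of the three norm contributions, absorbing the weight $(c-a)^p$ via $\kappa_i^p|I_i|^p$ together with $\kappa_i\geq 1$, and concluding with $(x^p+y^p)^{1/p}\leq x+y$. No gaps.
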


\begin{proof}
Let $\xi \in C^1_c((a,c);\R)$ and see
\begin{align*}
\int_a^c \pr*{\ddt \xi(t)}v(t)\dd t &= \int_a^b \pr*{\ddt \xi(t)}v_1(t)\dd t + \int_b^c \pr*{\ddt \xi(t)}v_2(t)\dd t\\
&= \sbk*{\xi(t)v_1(t)}_a^b - \int_a^b \xi(t)\ddt v_1(t)\dd t + \sbk*{\xi(t)v_2(t)}_b^c - \int_b^c \xi(t)\ddt v_2(t)\dd t\\
&= \underbrace{\xi(b)v_1(b) - \xi(b)v_2(b)}_{ = 0} - \int_a^c \xi(t)\pr*{\ddt v_1(t)} \1_{[a,b]}(t) + \pr*{\ddt v_2(t)}\1_{[b,c]}(t) \dd t\\
&= - \int_a^c \xi(t)\pr*{\ddt v_1(t)} \1_{[a,b]}(t) + \pr*{\ddt v_2(t)}\1_{[b,c]}(t) \dd t
\end{align*}
Therefore $\ddt v = \pr*{\ddt v_1} \1_{[a,b]} + \pr*{\ddt v_2}\1_{[b,c]} \in L^p((a,c);X)$. Since also $v \in L^p((a,c);D)$ we infer that $v \in \MR^p(a,c)$. Furthermore observe that
\begin{align*}
\norm*{v}^p_{\MR^p(a,c)} &= \norm*{v}^p_{L^p((a,c);X)} + \norm*{v}^p_{L^p((a,c);D)} + (c-a)^p\norm*{\ddt v}^p_{L^p((a,c);X)}\\
&= \norm*{v_1}^p_{L^p((a,b);X)} + \norm*{v_2}^p_{L^p((b,c);X)} + \norm*{v_1}^p_{L^p((a,b);D)} + \norm*{v_2}^p_{L^p((b,c);D)} \\
&\quad + (c-a)^p \pr*{\norm*{\ddt v_1}^p_{L^p((a,b);X)} + \norm*{\ddt v_2}^p_{L^p((b,c);X)}} \\ 
&\leq \pr*{\frac{c-a}{b-a}}^p\norm*{v_1}^p_{\MR^p(a,b)} + \pr*{\frac{c-a}{c-b}}^p\norm*{v_2}^p_{\MR^p(b,c)}
\end{align*}
Now see that $\pr*{\alpha^p + \beta^p}^{\frac{1}{p}} \leq \alpha + \beta$ for any $\alpha, \beta \geq 0$ to conclude \eqref{eq:glue_functions_mrp}.
\end{proof}

As a first step, we glue two operators together and then proceed by induction to show maximal regularity and bound on the regularity constant for finitely many operators. Some asymptotic behaviors are discussed as well as the sharpness of obtained estimate. \\

\begin{lem}
\label{lem:glueing}
Assume that $A_1 \in \MRprop_p(a,b)$ and $A_2 \in \MRprop_p(b,c)$ for some $a<b<c \in \R$. Then the \emph{glued-up} operator $A$ defined by
\begin{equation*}
A(t):= \left\{
\begin{array}{lr}
A_1(t), & a \leq t < b\\
A_2(t), &  b \leq t \leq c
\end{array}
\right.
\end{equation*}
belongs to $\MRprop_p(a,c)$ and we have

\begin{equation}
\label{eq:glueing_A}
\sbk*{A}_{\MRprop_p(a,c)} \leq \kappa_1\sbk*{A_1} + \kappa_2\sbk*{A_2} +\kappa_1^\frac{1}{p}\kappa_2^\frac{1}{q}Q_p\pr*{c-b, \norm*{A_2}_{\infty}, \sbk*{A_2}}\sbk*{A_1}.
\end{equation}
where $\kappa_1:= \frac{c-a}{b-a}$ and $\kappa_2:= \frac{c-a}{c-b}$. Moreover denoting $w_p:= 2^{\frac{1}{q}}(p-1)$ we have
\begin{equation}
\label{eq:def_Q_p}
Q_p(T, C, G):=\left\{
\begin{array}{ll}
1 + 2^\frac{1}{q}\pr*{CG \vee \frac{G}{T}} & \text{ if } w_p \frac{G}{T}\leq 1\\
\\
\displaystyle \frac{2^{\frac{1}{pq}}p}{\pr*{p-1}^{\frac{1}{q}}}\pr*{\frac{G}{T}}^\frac{1}{p} & \text{ if } 1 \leq w_p \frac{G}{T} \leq \frac{1}{CT}\\ 
\\
\pr*{\frac{1}{CT}\vee 1}^\frac{1}{p}\pr*{1 + 2^\frac{1}{q}CG} & \text{ if }\frac{1}{CT} \leq w_p \frac{G}{T} 
\end{array}
\right.
\end{equation}

\end{lem}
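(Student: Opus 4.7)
I build the solution $v$ of the Cauchy problem on $(a,c)$ piecewise and glue it using \autoref{lem:glue_functions_mrp}. Given $f \in L^p((a,c); X)$, I first solve on $(a,b)$ via the maximal regularity of $A_1$ to obtain $v_1 \in \MR^p_0(a,b)$ with $\norm*{v_1}_{\MR^p(a,b)} \leq \sbk*{A_1}\norm*{f}_{L^p((a,b);X)}$. To solve on $(b,c)$ with the matching initial data $v_1(b)$, I write $v_2 = \ti v_1 + u_2$ where $\ti v_1 \in \MR^p(b,c)$ is an explicit extension with $\ti v_1(b) = v_1(b)$ and $u_2 \in \MR^p_0(b,c)$ solves $\ddt u_2 + A_2 u_2 = f|_{(b,c)} - \ddt \ti v_1 - A_2 \ti v_1$. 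Maximal regularity of $A_2$ yields
\begin{equation*}
\norm*{u_2}_{\MR^p(b,c)} \leq \sbk*{A_2}\bigl(\norm*{f}_{L^p((b,c);X)} + \norm*{\ddt\ti v_1}_{L^p((b,c);X)} + \norm*{A_2}_\infty\norm*{\ti v_1}_{L^p((b,c);D)}\bigr),
\end{equation*}
and by the triangle inequality $\norm*{v_2}_{\MR^p(b,c)} \leq \norm*{\ti v_1}_{\MR^p(b,c)} + \norm*{u_2}_{\MR^p(b,c)}$. Setting $v := v_1\1_{[a,b]} + v_2\1_{[b,c]}$, \autoref{lem:glue_functions_mrp} gives $v \in \MR^p_0(a,c)$ solving the full Cauchy problem with $\norm*{v}_{\MR^p(a,c)} \leq \kappa_1\norm*{v_1}_{\MR^p(a,b)} + \kappa_2\norm*{v_2}_{\MR^p(b,c)}$. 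The first two terms of \eqref{eq:glueing_A} follow directly: $\kappa_1\sbk*{A_1}$ from the first piece, $\kappa_2\sbk*{A_2}$ from the $\sbk*{A_2}\norm*{f}_{L^p((b,c);X)}$ contribution.

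For the extension I take, for a free parameter $\theta > 0$, the time-reversed rescaling
\begin{equation*}
\ti v_1(t) := \begin{cases} v_1(b - \theta(t - b)), & b - \theta(t-b) \in [a,b], \\ 0, & \text{otherwise.} \end{cases}
\end{equation*}
Continuity at the junction point $t = b + (b-a)/\theta$ follows from $v_1(a) = 0$, so $\ti v_1 \in \MR^p(b,c)$ and $\ti v_1(b) = v_1(b)$. The change of variables $s = b - \theta(t-b)$ yields
\begin{equation*}
\norm*{\ti v_1}_{L^p((b,c);X)} \leq \theta^{-1/p}\norm*{v_1}_{L^p((a,b);X)}, \quad \norm*{\ti v_1}_{L^p((b,c);D)} \leq \theta^{-1/p}\norm*{v_1}_{L^p((a,b);D)},
\end{equation*}
together with $\norm*{\ddt \ti v_1}_{L^p((b,c);X)} \leq \theta^{(p-1)/p}\norm*{\ddt v_1}_{L^p((a,b);X)}$. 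Since $\norm*{v_1}_{\MR^p(a,b)} \leq \sbk*{A_1}\norm*{f}$, the remaining part of $\kappa_2\norm*{v_2}_{\MR^p(b,c)}$ beyond the $\kappa_2\sbk*{A_2}\norm*{f}$ piece takes the form $\kappa_2 \Phi(\theta)\sbk*{A_1}\norm*{f}$, where $\Phi(\theta)$ is a sum of monomials in $\theta^{\pm 1/p}$ with coefficients involving $(c-b)/(b-a)$, $\sbk*{A_2}/(c-b)$, and $\sbk*{A_2}\norm*{A_2}_\infty$.

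The final step is to take $\inf_{\theta > 0}$. Setting $T := c-b$, $G := \sbk*{A_2}$, $C := \norm*{A_2}_\infty$, the rescaling $\sigma := \theta T/(b-a)$ factors out $(T/(b-a))^{1/p} = (\kappa_1/\kappa_2)^{1/p}$ which, combined with the outer $\kappa_2$, reproduces the prefactor $\kappa_1^{1/p}\kappa_2^{1/q}$ of \eqref{eq:glueing_A}. The minimization in $\sigma$ balances three competing contributions: the $\MR^p$-norm of $\ti v_1$ scaling like $\sigma^{-1/p} + \sigma^{(p-1)/p}$, the term from $\sbk*{A_2}\norm*{\ddt \ti v_1}$ scaling like $(G/T)\sigma^{(p-1)/p}$, and the term from $\sbk*{A_2}\norm*{A_2}_\infty\norm*{\ti v_1}_{L^p((b,c);D)}$ scaling like $CG \cdot \sigma^{-1/p}$. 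Pairing terms of opposite scaling yields three regimes exactly matching \eqref{eq:def_Q_p}: (i) $w_p G/T \leq 1$, where the $\MR^p$-norm of $\ti v_1$ dominates and $\sigma$ of order one is optimal; (ii) $1 \leq w_p G/T \leq 1/(CT)$, where $\sbk*{A_2}\norm*{\ddt \ti v_1}$ dominates and optimal $\sigma \asymp (T/G)^{1/p}$ yields the rate $(G/T)^{1/p}$; (iii) $1/(CT) \leq w_p G/T$, dominated by the $CG$-term. The main obstacle is this optimization: determining the optimal $\sigma$ in each regime and checking that the sharp constants, including $w_p = 2^{1/q}(p-1)$, emerge correctly from the case analysis at the thresholds between regimes.
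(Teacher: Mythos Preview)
Your overall architecture is the same as the paper's: solve on $(a,b)$, lift the trace $v_1(b)$ to an extension on $(b,c)$, solve a zero-initial-value problem for the remainder, glue via \autoref{lem:glue_functions_mrp}, and then optimise a scaling parameter. The differences are in how the extension and the source estimate are handled, and these differences are exactly what produce the stated $Q_p$.

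The paper does \emph{not} fix a concrete extension. Instead it picks a parameter $\mu\geq 1$, lets $z$ range over all of $\{w\in \MR^p(b,b+\mu(c-b)):w(b)=v_1(b)\}$, and bounds $\norm{L_{A_2}z}_{L^p((b,c);X)}$ by the convexity inequality
\[
\norm{L_{A_2}z}_{L^p((b,c);X)}\leq 2^{1/q}\Bigl(C\vee \tfrac{1}{\mu(c-b)}\Bigr)\norm{z}_{\MR^p(b,b+\mu(c-b))},
\]
where the $\vee$ comes from matching the unweighted $\norm{\partial_t z}_{L^p}$ against the $\mu(c-b)$--weighted seminorm in the $\MR^p$ norm on the larger interval. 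Passing to the infimum over $z$ yields the trace norm $\norm{v_1(b)}_{\Tr^p,(b,b+\mu(c-b))}$, and the scaling identity from \autoref{rem:trace_space} gives the clean factor $(\mu\kappa_*)^{1/p}\norm{v_1}_{\MR^p(a,b)}$. The function to minimise is then $\tilde Q_p(\mu)=\mu^{1/p}\bigl(1+2^{1/q}(C\vee\tfrac{1}{\mu T})G\bigr)$, a \emph{max} of two monomials in $\mu$; the three regimes in \eqref{eq:def_Q_p} and the threshold constant $w_p=2^{1/q}(p-1)$ arise precisely from minimising this $\alpha(\mu)\vee\beta(\mu)$ structure over $\mu\in[1,\infty)$.

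Your route with the reflected extension $\tilde v_1$ and the triangle inequality on $\norm{\partial_t\tilde v_1}+\norm{A_2}_\infty\norm{\tilde v_1}_{L^p(D)}$ gives a \emph{sum} of monomials in $\theta^{\pm 1/p}$ with mismatched coefficients, and also forces you to bound $\norm{\tilde v_1}_{\MR^p(b,c)}$ and $\norm{L_{A_2}\tilde v_1}_{L^p}$ separately rather than via a common $\MR^p$ norm. This optimisation is solvable and yields a bound of the same order, but it will not reproduce the specific piecewise form of $Q_p$ nor the exact constants (in particular $2^{1/q}$ and $w_p$). So your claim that the regimes ``exactly match'' \eqref{eq:def_Q_p} is where the proposal overreaches: to land on the stated $Q_p$ you need both the max-type combination of $\norm{L_{A_2}z}$ and the trace-norm rescaling, not a fixed reflected extension with the triangle inequality.
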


\begin{proof}
\begin{itemize}
\item[$\star$] Remark that $A$ obviously satisfies \ref{df:NAi} by maximal regularity of $A_1$ and $A_2$. 

\item[$\star$]Let $f \in L^p((a,c);X)$, let us show that there exists a unique $v \in \MR^p(a,c)$ satisfying \eqref{eq:cp}$^{(a,c)}_{0,f}$ and that \eqref{eq:MR_ineq} holds. 

\begin{description}

\item[Uniqueness:] Assume that $v$ is a solution to \eqref{eq:cp}$^{(a,c)}_{0,f}$. It implies that $v|_{[a,b]}$ is a solution of the Cauchy problem with operator $A_1$ and source term $0$, hence $v = 0$ on $[a,b]$. Now this implies that $v|_{[b,c]}$ starts at $0$ and is a solution of the Cauchy problem with operator $A_2$ and source term $0$, again $v = 0$ on $[b,c]$.

\item[Existence and initial estimate:] Denote $f_1:= f|_{[a,b]} \in L^p((a,b);X)$, $f_2 = f|_{[b,c]} \in L^p((b,c);X)$. For convenience of notations let us set $\sbk*{A_1}:= \sbk*{A_1}_{\MRprop_p(a,b)}$ and $\sbk*{A_2}:= \sbk*{A_2}_{\MRprop_p(b,c)}$.\\

Since $A_1 \in \MRprop_p(a,b)$, there exists a (unique) solution $v_1 \in \MR^p(a,b)$, solution of the Cauchy problem starting at $0$ with source term $f_1$ and such that
\begin{equation}
\label{eq:bound_v1}
\norm*{v_1}_{\MR_p(a,b)} \leq \sbk*{A_1}\norm*{f_1}_{L^p((a,b);X)}
\end{equation}
Denote $x = v_1(b) \in \Tr^p$. Fix $\mu \geq 1$ and let $z \in \cbk*{w \in \MR^p(b,b + \mu(c-b)): w(b) = x}$, which is nonempty since $x \in \Tr^p$. By definition, $L_{A_2} z|_{(b,c)} \in L^p((b,c);X)$, let us then denote $g_z:= f_2 - L_{A_2}z|_{[b,c]} \in L^p((b,c);X)$. Since $A_2 \in \MRprop_p(b,c)$ there exists $v_z \in \MR^p(b,c)$ such that,
\begin{equation*}
\left\{
\begin{array}{rcll}
\ddt v_z(t) + A_2(t) v_z(t) &=& g_z(t), & \text{ a.e. } t \in (b,c)\\
v_z(b) &=& 0&
\end{array}
\right.
\end{equation*}
And by linearity $v_2:= v_z + z|_{[b,c]}$ must be the (unique) solution to 
\begin{equation*}
\left\{
\begin{array}{rcll}
\ddt v_2(t) + A_2(t) v_2(t) &=& f_2(t), & \text{ a.e. } t \in (b,c)\\
v_2(b)&=& x
\end{array}
\right.
\end{equation*}
Now by maximal regularity of $A_2$ we infer:
\begin{align*}
\norm*{v_z}_{\MR^p(b,c)} &\leq \sbk*{A_2}\norm*{g}_{L^p((b,c);X)}\\
& \leq \sbk*{A_2}\norm*{f_2}_{L^p((b,c); X)} + \sbk*{A_2}\norm*{L_{A_2} z}_{L^p((b,c); X)}
\end{align*}
Observe that,
\begin{align}
\norm*{L_{A_2} z}^p_{L^p((b,c); X)} &= \int_b^c \norm*{\ddt z(t) + A_2(t)z(t)}^p_X \dd t \notag \\
& \leq 2^{p-1} \pr*{\int_b^c \norm*{\ddt z(t)}^p\dd t + C^p \int_b^c \norm*{z(t)}^p_D \dd t} \notag \\
& \leq 2^{p-1}\pr*{\int_b^{b + \mu(c-b)}\norm*{\ddt z(t)}^p\dd t + C^p \int_b^{b + \mu(c-b)} \norm*{z(t)}^p_D \dd t} \notag \\
& \leq 2^{p-1}\pr*{C^p \vee \frac{1}{\mu^p(c-b)^p}}\norm*{z}^p_{\MR^p(b,b + \mu(c-b))} \label{eq:bound_LA2}
\end{align}
according to \eqref{eq:def_mr_local_norm} of \autoref{df:mrp_space} and where $C:= \norm*{A_2}_{L^\infty((b,c);\mcl{L}(D,X))} < +\infty$. Therefore we can infer the following bound using \eqref{eq:bound_LA2}:
\begin{align*}
\norm*{v_2}_{\MR^p(b,c)} &\leq \norm*{v_z}_{\MR^p(b,c)} + \norm*{z}_{\MR^p(b,c)}\\
&\leq  \sbk*{A_2}\norm*{f_2}_{L^p((b,c);X)} + 2^{\frac{1}{q}}\pr*{C \vee \frac{1}{\mu(c-b)}}\sbk*{A_2}\norm*{z}_{\MR^p(b,b + \mu(c-b))} + \norm*{z}_{\MR^p(b,b + \mu(c-b))}\\
& \leq \sbk*{A_2}\norm*{f_2}_{L^p((b,c);X)} + \pr*{1 + 2^{\frac{1}{q}}\pr*{C \vee \frac{1}{\mu(c-b)}}\sbk*{A_2}}\norm*{z}_{\MR^p(b,b + \mu(c-b))}
\end{align*}
Passing to the infimum over $z \in \cbk*{w \in \MR^p(b,b + \mu(c-b)) \:\ w(b) = x}$, we find
\begin{equation*}
\norm*{v_2}_{\MR^p(b,c)} \leq \pr*{1 + 2^{\frac{1}{q}}\pr*{C \vee \frac{1}{\mu(c-b)}}\sbk*{A_2}}\norm*{x}_{\Tr^p, (b,b + \mu(c-b))} + \sbk*{A_2}\norm*{f_2}_{L^p(b,c;X)}
\end{equation*}
Using the rescaling of the norms on the trace space described in \autoref{rem:trace_space} together with \eqref{eq:bound_v1}, we find
\begin{align*}
\norm*{x}_{\Tr^p, (b,b + \mu(c-b))} &= \pr*{\mu\kappa_*}^\frac{1}{p} \norm*{x}_{\Tr^p,(a,b)} \\
&\leq \pr*{\mu\kappa_*}^\frac{1}{p}\norm*{v_1}_{\MR^p(a,b)} \\
&\leq \pr*{\mu\kappa_*}^\frac{1}{p}\sbk*{A_1}\norm*{f_1}_{L^p((a,b);X)}
\end{align*}
where ${\displaystyle \kappa_*:= \frac{c-b}{b-a}}$, therefore we finally obtain, 

\begin{equation}
\label{eq:v2_bound}
\norm*{v_2}_{\MR^p(b,c)} \leq (\mu\kappa_*)^{\frac{1}{p}}\pr*{1 + 2^{\frac{1}{q}}\pr*{C \vee \frac{1}{\mu(c-b)}}\sbk*{A_2}}\sbk*{A_1}\norm*{f_1}_{L^p((a,b);X)} + \sbk*{A_2}\norm*{f_2}_{L^p((b,c);X)}
\end{equation}
Define the function $v:= v_1\1_{[a,b]} + v_2\1_{[b,c]}$. Since $v_1(b) = v_2(b)$, we infer from \autoref{lem:glue_functions_mrp} that $v \in \MR^p(a,c)$. Further we observe that $v$ is in fact a solution to \eqref{eq:cp} for a.e. $t \in [a,c]$. Moreover recall that 

\begin{equation}
\label{eq:glueing_v1_v2}
\norm*{v}_{\MR^p(a,c)} \leq \kappa_1\norm*{v_1}_{\MR^p(a,b)} + \kappa_2\norm*{v_2}_{\MR^p(b,c)} 
\end{equation}
where $\kappa_1 = \frac{c-a}{b-a}$ and $\kappa_2 = \frac{c-a}{c-b}$. Remark that $\frac{1}{\kappa_1} + \frac{1}{\kappa_2} = 1$ and $\kappa_* = \frac{\kappa_1}{\kappa_2}$. Hence assembling \eqref{eq:v2_bound} and \eqref{eq:glueing_v1_v2}, together with the fact that $\norm*{f_1}_{L^p((a,b);X)}, \norm*{f_2}_{L^p((b,c);X)} \leq \norm*{f}_{L^p((a,c);X)}$:

\begin{equation*}
\norm*{v}_{\MR^p(a,c)} \leq \pr*{\kappa_1\sbk*{A_1} + \kappa_2\pr*{(\mu\kappa_*)^{\frac{1}{p}}\pr*{1 + 2^{\frac{1}{q}}\pr*{C \vee \frac{1}{\mu(c-b)}}\sbk*{A_2} }\sbk*{A_1} +  \sbk*{A_2}}}\norm*{f}_{L^p((a,c);X)}
\end{equation*}
By remarking that $\kappa_2(\kappa_*)^\frac{1}{p} = \kappa_1^\frac{1}{p}\kappa_2^\frac{1}{q}$ we deduce the following estimate of $\sbk*{A}_{\MRprop_p(a,c)}$: 

\begin{equation}
\label{eq:first_bound_mrA}
\sbk*{A}_{\MRprop_p(a,c)} \leq \kappa_1\sbk*{A_1} + \kappa_2\sbk*{A_2} + \kappa_1^\frac{1}{p}\kappa_2^\frac{1}{q}\mu^{\frac{1}{p}}\pr*{1 + 2^{\frac{1}{q}}\pr*{C \vee \frac{1}{\mu(c-b)}}\sbk*{A_2}}\sbk*{A_1}
\end{equation}
\item[Optimization with $\mu$:]
Let us denote $\ti{Q}_p(T, C, G, \mu):= \mu^\frac{1}{p}\pr*{1 + 2^{\frac{1}{q}}\pr*{C \vee \frac{1}{\mu T}}G}$. Let $G, C, T$ be given and let us minimize $\mu \mapsto P(\mu):= \ti{Q}_p(T, C, G, \mu)$ on $[1, +\infty)$. Remark that $P(\mu) = \alpha(\mu) \vee \beta(\mu)$, where 
\begin{equation*}
\alpha(\mu):= \mu^\frac{1}{p}\pr*{1 + 2^{\frac{1}{q}}C G}
\end{equation*}
is increasing and
\begin{equation*}
\beta(\mu):= \mu^\frac{1}{p}\pr*{1 + \frac{2^{\frac{1}{q}}G}{\mu T}}
\end{equation*}
We note that $\mu \mapsto \beta(\mu)$ admits a unique global minimum on $\R^*_+$ attained at $\mu_1:= (p-1)2^{\frac{1}{q}}\frac{G}{T} =: w_p\frac{G}{T}$, because $\beta$ is decreasing on $(0,\mu_1]$ and increasing on $[\mu_1, +\infty)$. Moreover observe that $\alpha$ and $\beta$ both meet at a unique point $\mu_2 = \frac{1}{CT}$. Therefore the global minimum of $P(\mu)$ on $\R_+^*$ is attained at $\mu_*:= \mu_1 \wedge \mu_2$. We can therefore infer that the minimum of $P$ on $[1, +\infty)$ is attained at $1 \vee \pr*{\frac{1}{CT} \wedge w_p\frac{G}{T}}$ which yields our result. 

\end{description}

\end{itemize}

\end{proof}

\begin{rem}
Note that in \eqref{eq:glueing_A}, the roles of $A_1$ and $A_2$ are not symmetric, this accounts for the fact that the maximal regularity constant is defined with the initial condition $x = 0$. It is worth noting that it is the multiplicative interaction between $\sbk*{A_1}$ and $\sbk*{A_2}$ of the term $q\kappa_1\kappa_2Q_p(c-b, C, \sbk*{A_2})\sbk*{A_1}$ which gives rise to the asymmetry.\\

It might seem possible to further refine \eqref{eq:glueing_A} by exploiting a piecewise linear change of variable $\phi$ such that $|\phi((a,b))| = |\phi((b,c))|$ so as to obtain optimality on the factors $\kappa_1$ and $\kappa_2$. Unfortunately the factor in \eqref{eq:mr_const_Aphi} cancels out the optimality. 
\\
Remark that for any $\lambda > 0$, $C,T,G \in \R_+$, we have the invariance $Q_p(C, T, G) = Q_p(\lambda C, \frac{T}{\lambda}, \frac{G}{\lambda})$. This invariance is compatible with a linear change of variables in time (see \autoref{rem:change_var_applications} \eqref{eq:mr_const_Aphi_affine}). This then implies that \eqref{eq:glueing_A} is invariant with respect to a linear change of variables in time. 
\end{rem}

Now we derive the result for the glued-up operator of a finite family of nonautonomous operators. We first define the following geometrical parameters related to the subdivision. 

\begin{df}
\label{df:parameter_family}
Let $N \geq 2$ and $\mcl{T}_N = \cbk*{\tau_i}_{0 \leq i \leq N}$ be such that $a = \tau_0 < \tau_1 < ... < \tau_N = b$, define for each $j \in \cbk*{2, ..., N}$ the function $H_j(p,\mcl{T}, \cdot, \cdot): \R_+\times \R_+ \to \R_+$ as

\begin{equation}
\label{df:H_j}
H_j(p,\mcl{T}, C, G):= \kappa_{1,j} + \kappa_{1,j}^{\frac{1}{p}}\kappa_{2,j}^\frac{1}{q}Q_p\pr*{\tau_j - \tau_{j-1}, C, G}
\end{equation}

where $\kappa_{1, j}:= \frac{\tau_{j} - \tau_0}{\tau_{j-1} - \tau_0}$, $\kappa_{2, j}:= \frac{\tau_j - \tau_0}{\tau_j - \tau_{j-1}}$.
\end{df}

We now give an estimate of the $\MRprop_p$ constant for the glueing of finitely many operators in terms of these geometrical parameters.

\begin{thm}
\label{thm:finite_glueing}
Let $\mcl{T} = \cbk*{\tau_i}_{0 \leq i \leq N}$ be any finite subdivision of $\ol{I}$. Let us be given a family $\cbk*{A_i}_{1 \leq i\leq N}$ of nonautonomous operators such that $A_i \in \MRprop_p(\tau_{i-1},\tau_i)$. \\

Then the operator $A$ defined by
\begin{equation*}
A(t):= \left\{
\begin{array}{lr}
A_1(t), & \text{ if } \tau_0 \leq t < \tau_1\\
A_2(t), & \text{ if } \tau_1 \leq t < \tau_2\\
\vdots &\\
A_N(t), & \text{ if } \tau_{N-1} \leq t \leq \tau_N
\end{array}
\right.
\end{equation*}
belongs to $\MRprop_p(a,b)$ and we have

\begin{equation}
\label{eq:glueing_finite_A}
\sbk*{A}_{\MRprop_p(a,b)} \leq \sum_{i = 1}^N \frac{\tau_j - \tau_0}{\tau_j - \tau_{j-1}}\pr*{\prod_{j = i+1}^N H_j(p,\mcl{T}, \norm*{A_j}_{\infty}, \sbk*{A_j})}\sbk*{A_i}_{\MRprop_p(\tau_{i-1}, \tau_i)}
\end{equation}
\end{thm}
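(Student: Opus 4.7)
The plan is to proceed by induction on $N \geq 2$, with the base case $N = 2$ supplied directly by \autoref{lem:glueing}. Indeed, for $N = 2$ and $(a,b,c) = (\tau_0, \tau_1, \tau_2)$, the quantities $\kappa_1, \kappa_2$ appearing in \autoref{lem:glueing} coincide exactly with $\kappa_{1,2}, \kappa_{2,2}$ from \autoref{df:parameter_family}, and a simple regrouping of \eqref{eq:glueing_A} rewrites it as
\begin{equation*}
\sbk*{A}_{\MRprop_p(\tau_0,\tau_2)} \leq H_2\bigl(p,\mcl{T},\norm*{A_2}_\infty,\sbk*{A_2}\bigr)\sbk*{A_1} + \kappa_{2,2}\sbk*{A_2},
\end{equation*}
which is precisely \eqref{eq:glueing_finite_A} at $N=2$ with the empty product $\prod_{j=3}^{2} H_j = 1$.

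For the inductive step, assume the result holds at rank $N-1$ and let $\tilde{A}$ denote the glued-up operator of $A_1,\ldots,A_{N-1}$ over $(\tau_0, \tau_{N-1})$. Applying the inductive hypothesis to the truncated subdivision $\tilde{\mcl{T}} := \cbk*{\tau_i}_{0 \leq i \leq N-1}$ yields $\tilde{A} \in \MRprop_p(\tau_0, \tau_{N-1})$ with the expected bound. The crucial observation is that, for any $j \leq N-1$, the constants $\kappa_{1,j}, \kappa_{2,j}$, and hence $H_j$, depend only on $\tau_0, \tau_{j-1}, \tau_j$ together with the parameters of $A_j$; they are therefore unchanged when computed along $\tilde{\mcl{T}}$ rather than $\mcl{T}$. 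I then invoke \autoref{lem:glueing} once more on the pair $\bigl(\tilde{A}, A_N\bigr)$ over $(\tau_0, \tau_{N-1})$ and $(\tau_{N-1}, \tau_N)$ to get
\begin{equation*}
\sbk*{A}_{\MRprop_p(a,b)} \leq H_N\bigl(p,\mcl{T},\norm*{A_N}_\infty,\sbk*{A_N}\bigr)\sbk*{\tilde{A}} + \kappa_{2,N}\sbk*{A_N}.
\end{equation*}

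Substituting the inductive estimate for $\sbk*{\tilde{A}}$ is then pure bookkeeping: the extra factor $H_N$ extends every partial product $\prod_{j=i+1}^{N-1} H_j$ to $\prod_{j=i+1}^{N} H_j$ for $1 \leq i \leq N-1$, while the term $\kappa_{2,N}\sbk*{A_N} = \tfrac{\tau_N-\tau_0}{\tau_N-\tau_{N-1}}\sbk*{A_N}$ provides the $i = N$ summand of \eqref{eq:glueing_finite_A} under the empty-product convention. No new analytic input is required beyond \autoref{lem:glueing}. The only structural subtlety worth underlining is that the iteration must proceed by appending on the \emph{right}: this asymmetry reflects the fact that in \autoref{lem:glueing} the $Q_p$ correction is attached to the second operator, which in turn mirrors the intrinsic directionality of the Cauchy problem (the initial condition being imposed at the left endpoint). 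Attempting to invert the order of gluing would not produce the right product structure.
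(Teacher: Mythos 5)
Your proposal is correct and follows essentially the same route as the paper: induction on $N$, gluing the operator assembled from $A_1,\dots,A_{N-1}$ on $(\tau_0,\tau_{N-1})$ with $A_N$ via \autoref{lem:glueing}, and noting that the constants $\kappa_{1,j},\kappa_{2,j}$ (hence $H_j$) are unchanged under truncation of the subdivision so that the bookkeeping closes the induction. The only cosmetic difference is your base case at $N=2$ versus the paper's trivial $N=1$, which is immaterial.
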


\begin{proof}
We show the estimate by induction on $N$.
\begin{itemize}

\item[$\star$] For $N = 1$, the result is trivial.

\item[$\star$] Fix some $N \geq 2$ and assume that the result holds for any subdivision of size $N-1$, let us show that the result holds for any subdivision of size $N$. \\

Let $a, b \in \R$, $a < b$, $\mcl{T}:= \cbk*{\tau_i}_{0 \leq i \leq N}$ be a subdivision of size $N$ and let $\cbk*{A_i}_{1 \leq i \leq N}$ be a family of maximally regular nonautonomous operators on each $[\tau_{i-1}, \tau_i]$ respectively. Denote by $A$ the resulting glued-up operator on $[a,b]$. \\

Denote $B_1:= A|_{[\tau_0, \tau_{N-1}]}$ and $B_2:= A_N$. By our induction hypothesis, we are able to apply \autoref{lem:glueing} on $B_1$ and $B_2$. We obtain:
\begin{align*}
\sbk*{A}_{\MRprop_p(a,b)} &\leq \pr*{\kappa_{1, N} + \kappa^{\frac{1}{p}}_{1, N}\kappa^{\frac{1}{q}}_{2, N}Q_p\pr*{\tau_{N-1} - \tau_N, \norm*{B_2}_{\infty}, \sbk*{B_2}}}\sbk*{B_1}_{\MRprop_p(\tau_0, \tau_{N-1})} + \kappa_{2,N}\sbk*{B_2}_{\MRprop_p(\tau_{N-1}, \tau_N)}\\
& = H_N(p,\mcl{T}, \norm*{B_2}_{\infty}, \sbk*{B_2})\sbk*{B_1}_{\MRprop_p(\tau_0, \tau_{N-1})} + \kappa_{2,N}\sbk*{B_2}_{\MRprop_p(\tau_{N}, \tau_N)}
\end{align*}
Let us then apply our induction hypothesis on the subdivisions $\mcl{T}' = \cbk*{\tau_i}_{0 \leq i \leq N-1}$ and the corresponding family of operators $\cbk*{A_i}_{1 \leq i \leq N-1}$ glueing-up to $B_1$ and each of size $N-1$. For simplicity denote $H_j:= H_j(p, \mcl{T}, \norm*{A_j}_\infty, \sbk*{A_j}_{\MRprop_p(\tau_j - \tau_{j-1})})$. We infer:
\begin{align*}
\sbk*{A}_{\MRprop_p(a,b)} & \leq H_N\sum_{i = 1}^{N-1} \pr*{\prod_{j = i+1}^{N-1} H_j}\kappa_{2,i}\sbk*{A_i}_{\MRprop_p(\tau_{i-1}, \tau_i)} + \kappa_{2,N}\sbk*{A_N}_{\MRprop_p(\tau_{N-1}, \tau_N)}\\
& = \sum_{i = 1}^{N-1} \pr*{\prod_{j = i+1}^{N} H_j}\kappa_{2,i}\sbk*{A_i}_{\MRprop_p(\tau_{i-1}, \tau_i)} + \kappa_{2,N}\sbk*{A_N}_{\MRprop_p(\tau_{N-1}, \tau_N)}\\
& = \sum_{i = 1}^N \pr*{\prod_{j = i+1}^{N} H_j}\kappa_{2,i}\sbk*{A_i}_{\MRprop_p(\tau_{i-1}, \tau_i)}
\end{align*}
This completes our proof. 

\end{itemize}
\end{proof}

%
%

\section{Proofs of the main estimates and examples}

In this section we prove our main estimation results. The general estimate  \autoref{thm:relative_continuity} is  derived in \Cref{subsection:4.1}. It is then used in \Cref{subsection:4.2} to show the quantitative version \autoref{thm:A_MR_log_estimate} for operators in $RC^{\alpha, \beta}(I)$. Finally \Cref{subsection:4.3} contains some examples.

\label{section:4}
%

\subsection{Estimation for general relatively continuous operators}
\label{subsection:4.1}

The idea for the study of the maximal regularity of relatively continuous operators, is to subdivide the time interval $[a,b]$ and consider the operator on each subinterval $I_i \subset [a,b]$ as the sum of an autonomous maximal operator $A_i$ and a regular perturbation $B$ arising from the relative continuity. \\

We start with a somewhat precise version of the Besicovitch covering theorem in dimension $1$ (found e.g. in \cite[Theorem 2.18]{ambrosio2000functions}).

\begin{lem}
\label{lem:besicovitch_dim1}
Let $\rho: [a,b] \rightarrow (0,+\infty)$ be a positive function, then there exists $N = N(\rho) \in \N$, a set of center points $\mcl{C}:= \mcl{C}(\rho) = \cbk*{t_i}_{1\leq i \leq N} \subset [a,b]$ and a subdivision of $[a,b]$, $\mcl{T}:= \mcl{T}(\rho) = \cbk*{\tau_i}_{0 \leq i \leq N}$ such that
\begin{equation}
\label{eq:subdivision}
a=\tau_0 \leq t_1 < \tau_1 < t_2 < ... \tau_{N-1} < t_N \leq \tau_N = b
\end{equation}
such that for all $i \in \cbk*{1, ..., N}$, 
\begin{equation}
\label{eq:max_dist_ti}
|t_i-\tau_i|, |t_i-\tau_{i-1}| \leq \rho(t_i)
\end{equation}
\begin{equation}
\label{eq:tau_separation}
\rho(t_i) \leq |\tau_i - \tau_{i-1}| \leq 2 \rho(t_i)
\end{equation}
Hence we can write
\begin{equation*}
[a,b] = \bigcup_{i = 1}^{N} [\tau_{i-1}, \tau_i]
\end{equation*}
\end{lem}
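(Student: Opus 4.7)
The plan has two ingredients: a compactness argument for finiteness of $N$, and a greedy construction (with a terminal adjustment) to secure the placement constraints.

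\textbf{Finiteness of $N$.} The open intervals $U_x := (x - \rho(x), x + \rho(x))$, for $x \in [a,b]$, cover the compact interval $[a,b]$, so a finite subcover $\{U_{t_1}, \ldots, U_{t_M}\}$ exists. This bounds a priori the number of centers needed and immediately gives $N(\rho) \in \N$.

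\textbf{Greedy construction.} Set $\tau_0 := a$ and iterate: given $\tau_{i-1} < b$, pick $t_i \in (\tau_{i-1}, b]$ with $t_i - \tau_{i-1} \leq \rho(t_i)$, and set $\tau_i := \min(t_i + \rho(t_i), b)$. By construction one has $|t_i - \tau_{i-1}|, |t_i - \tau_i| \leq \rho(t_i)$, and, whenever $\tau_i = t_i + \rho(t_i) < b$, the identity $\tau_i - \tau_{i-1} = (t_i - \tau_{i-1}) + \rho(t_i)$ gives $\rho(t_i) \leq \tau_i - \tau_{i-1} \leq 2\rho(t_i)$ directly. Termination of the procedure follows from the finiteness of the open subcover above, as each step advances $\tau_{i-1}$ by at least the minimum of the $\rho(t_j)$ on the subcover.

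\textbf{Terminal adjustment.} The delicate case is the last step: if $\tau_N = b$ with $b < t_N + \rho(t_N)$, then $\tau_N - \tau_{N-1} = b - \tau_{N-1}$ may fail the lower bound $\rho(t_N) \leq b - \tau_{N-1}$. One remedies this by a local retrofit: redefine the penultimate $\tau_{N-1}$ as a smaller value and pick a new center $t_N \in [\tau_{N-1}, b]$ so that $b - \tau_{N-1} \in [\rho(t_N), 2\rho(t_N)]$ and $|t_N - \tau_{N-1}|, |t_N - b| \leq \rho(t_N)$ (this only affects the final two intervals of the subdivision).

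The main obstacle is precisely this terminal adjustment: the forward iteration is asymmetric, and enforcing the two-sided length constraint on the final interval may require merging with the penultimate one. A cleaner alternative, which I would use to avoid this asymmetry, is to extract a \emph{minimal} finite subcover $\{U_{t_i}\}_{i=1}^N$ from the compactness step: minimality in one dimension forces consecutive intervals $U_{t_i}, U_{t_{i+1}}$ to intersect (otherwise a gap in coverage would contradict minimality together with the covering of $[a,b]$), and selecting $\tau_i \in U_{t_i} \cap U_{t_{i+1}}$ automatically delivers the distance bounds $|t_i - \tau_{i-1}|, |t_i - \tau_i| \leq \rho(t_i)$. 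The length bounds $\rho(t_i) \leq \tau_i - \tau_{i-1} \leq 2\rho(t_i)$ then follow by choosing $\tau_i$ as the rightmost point available in $U_{t_i} \cap U_{t_{i+1}}$ and invoking a pigeonhole-type argument on the minimality structure.
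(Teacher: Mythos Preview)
Your greedy construction has a genuine gap in the termination argument. You claim each step advances $\tau_{i-1}$ by at least the minimum of $\rho$ over the centers of the finite subcover, but the centers $t_i$ produced by the greedy iteration are \emph{not} drawn from that subcover---they are arbitrary points with $t_i-\tau_{i-1}\le\rho(t_i)$. Nothing prevents $\rho(t_i)$ from being arbitrarily small at each step, so the procedure may not terminate. The terminal adjustment is also only gestured at.

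Your second approach (minimal finite subcover) is in spirit the paper's route, but the decisive step is hand-waved and your specific choice of $\tau_i$ is wrong. Minimality in dimension one does yield exactly the two structural properties the paper isolates by an explicit pruning induction: (P1) no $U_{t_i}$ is contained in another, and (P2) no three of them share a common point. From these one gets, for consecutive ordered centers,
\[
|\rho(t_i)-\rho(t_{i+1})|<t_{i+1}-t_i<\rho(t_i)+\rho(t_{i+1}),\qquad t_{i+1}-\rho(t_{i+1})\ge t_{i-1}+\rho(t_{i-1}).
\]
However, taking $\tau_i$ to be the \emph{rightmost} point of $U_{t_i}\cap U_{t_{i+1}}$, i.e.\ $\tau_i=t_i+\rho(t_i)$, does not give the lower bound $\tau_i-\tau_{i-1}\ge\rho(t_i)$: that would require $t_i\ge t_{i-1}+\rho(t_{i-1})$, which (P1)--(P2) do not force. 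Concretely, on $[0,10]$ with centers $2,4,8$ and $\rho\equiv3$ one finds $\tau_1=5$, $\tau_2=7$, so $\tau_2-\tau_1=2<3=\rho(t_2)$. The paper instead takes the \emph{midpoint} of the overlap,
\[
\tau_i=\tfrac{1}{2}\bigl(t_i+t_{i+1}+\rho(t_i)-\rho(t_{i+1})\bigr),
\]
which gives $\tau_i-\tau_{i-1}=\rho(t_i)+\tfrac{1}{2}\bigl[(t_{i+1}-\rho(t_{i+1}))-(t_{i-1}+\rho(t_{i-1}))\bigr]$; the bracket is nonnegative by (P2) and at most $2\rho(t_i)$ by the two overlap conditions, so both length bounds follow. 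Your ``pigeonhole-type argument'' does not substitute for this explicit choice.
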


\begin{proof}
We have $[a,b] \subset \bigcup_{t \in [a,b]} I_t$, $I_t:= (t-\rho(t),t + \rho(t))$. By compactness there exists a finite family of intervals $\mcl{F} \subset \cbk*{I_t}_{t \in [a,b]}$ covering $[a,b]$.\\

\begin{itemize}
\item[$\star$] Claim: there exists a subcover $\ti{\mcl{F}} \subset \mcl{F}$ which satisfies both of the following nonoverlapping properties: 

\begin{enumerate}[label = ($\mathbf{P\arabic*}$), leftmargin = 8em]
\item \label{i:p1} For any $I,J \in \ti{\mcl{F}}$, $I\subset J \Rightarrow I = J$. 
\item \label{i:p2} For any $I, J, K \in \ti{\mcl{F}}$, $I \cap J \cap K \neq \emptyset \Rightarrow I = J$ or $J = K$ or $K = I$. 
\end{enumerate}

We show the claim by induction on the size $N$ of $\mcl{F}$.
\begin{description}
\item[Base:] If $N = 1$, the result is trivial.
\item[Induction:] Assume the claim holds for some $N \geq 1$. Let $\mcl{F}$ be a covering family of size $N + 1$. If $\mcl{F}$ already satisfies \ref{i:p1} and \ref{i:p2} we are done. Otherwise, either \ref{i:p1} or \ref{i:p2} fail to be satisfied.

\begin{itemize}
\item[Case 1:] Assume that \ref{i:p1} fails for $\mcl{F}$. There exist $I,J \in \mcl{F}$ such that $I \subset J$ and $I \neq J$. Observe then that $\mcl{F} \setminus \cbk*{I}$ still covers $[a,b]$ and is of size $N$. 

\item[Case 2:] Assume now that \ref{i:p2} fails for $\mcl{F}$. There exist $I,J,K \in \mcl{F}$, all distinct such that $I\cap J \cap K \neq \emptyset$. Write $I = (a_I, b_I)$, $J = (a_J, b_J)$ and $K = (a_K, b_K)$ and assume without loss of generality that $a_I \leq a_J \leq a_K$.
\begin{itemize}
\item[$\star$] If $b_K \leq b_I$ or $b_K \leq b_J$ then we conclude that $K \subset I$ or $K \subset J$ hence $\mcl{F}\setminus \cbk*{K}$ covers $[a,b]$ and is of size $N$.

\item[$\star$] Otherwise if $b_K > b_I$ and $b_K > b_J$, then since $I \cap K \neq \emptyset$, $a_K < b_I$. Thus we have $I\cup K = (a_I, b_K)$ and so $J \subset I \cup K$. Therefore $\mcl{F}\setminus\cbk*{J}$ covers $[a,b]$ and is of size $N$.
\end{itemize}
\end{itemize}

In any case we can cover $[a,b]$ with a subfamily $\ti{\mcl{F}} \subset \mcl{F}$ of size $N$. The induction hypothesis allows to conclude.
\end{description}

\item[$\star$] We still denote by $\mcl{F} = \cbk*{I_{t_i}}_{1 \leq i \leq N}$ some finite subcover of $[a,b]$ satisfying $\mathbf{(P1)}$ and $\mathbf{(P2)}$. Let us assume furthermore that $(t_i)_{1 \leq i \leq N}$ are ordered in an increasing fashion, i.e. $a\leq t_1 < t_2 < ... < t_N \leq b$ and define $\tau_0 = a$, $\tau_N = b$ and $\tau_i:= \frac{t_{i+1} + t_i}{2} + \frac{\rho(t_i) - \rho(t_{i + 1})}{2}$ for $i \in \cbk*{0,..., N-1}$. From the properties \ref{i:p1} and \ref{i:p2}, we infer for any $i \in \cbk*{1, ..., N-1}$ that,

\begin{equation*}
\rho(t_i) - \rho(t_{i+1}) < t_{i+1} - t_i < \rho(t_i) + \rho(t_{i + 1})
\end{equation*}
and
\begin{equation*}
t_{i+1} - \rho(t_{i+1}) > t_{i-1} + \rho(t_{i - 1})
\end{equation*}
From these we straightforwardly infer \eqref{eq:subdivision} and \eqref{eq:tau_separation}. 
\end{itemize}

%
\end{proof} 

Let us now state and prove an explicit version of the result of Arendt (found in \cite[Theorem 2.7, Theorem 2.11]{arendt2007lp-maximal}). First let us recall the quantities of interest and the main assumption of the theorem. 

\begin{enumerate}[label = \textbf{(A\arabic*)}, leftmargin = 40pt]
\item \label{item:A1_s5}
$\left\{
\begin{varwidth}{\textwidth}
\begin{enumerate}[label = (\alph*) ]
\item \label{item:a_s5} \textbf{Relative continuity:} $A \in RC(I;(D;X))$.
\item \label{item:b_s5} \textbf{Decomposition:} There exist $A_0: I \lra \mcl{L}(D,X)$ and $B: I \times I \lra \mcl{L}(D,X)$ such that 
\begin{enumerate}[label = (\alph{enumii}.\arabic*), leftmargin = 40pt]
\item \label{item:b1_s5} For all $t, s \in I$, $A(s) = A_0(t) + B(t,s)$.
\item \label{item:b2_s5} For all $t \in I$, $A_0(t) \in \mrprop_p(\R_+)$.
\item \label{item:b3_s5} There exists a range $r_A = (\delta_A, \eta_A)$ such that for all $x \in D$,
\begin{equation}
\forall t, s\in I, \quad |t-s| \leq \rho_A(t) \quad \Longrightarrow \quad 
\norm*{B(t,s) x}_X \leq \epsilon_0(t) \norm*{x}_D + \mu_A(t) \norm*{x}_X
\end{equation}
where $\rho_A:= \delta_A \circ \epsilon_0$, $\mu_A:= \eta_A \circ \epsilon_0$ and $\epsilon_0$ is defined through \eqref{eq:def_KA_MA}-\eqref{eq:def_epsA}. 
\end{enumerate}
\end{enumerate}
\end{varwidth}
\right.$
\end{enumerate}

Recall also the auxiliary quantities $K_0$ and $M_0$ for $A_0$ defined in \eqref{eq:def_KA_MA}.

\begin{thm}
	Assume that $A$ satisfies \ref{item:A1_s5}. Then $A \in \MRprop_p(I)$ and there exists a subdivision $\mcl{T}:= \cbk*{\tau_i}_{0 \leq i \leq N}$ of $\ol{I}$ and center points $\mcl{C}:= \cbk*{t_i}_{1 \leq i \leq N}$ such that 
	\begin{gather}
		a = \tau_0 < t_1 < \tau_1 < ... < t_N < \tau_N = b\\
		\rho_A(t_i) \leq \tau_i - \tau_{i-1} \leq 2 \rho_A(t_i), \qquad \forall i \in \cbk*{1,..., N}
	\end{gather}
	and there exists $K:= K(p, \mcl{T}, \mcl{C}, K_0, M_0, \norm*{A}_\infty) \geq 0$ such that
	\begin{equation}
	\label{eq:bound_relatively_continuous2}
	\sbk*{A}_{\MRprop_p(I)} \leq K
	\end{equation}
\end{thm}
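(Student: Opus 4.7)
My plan is to combine three ingredients already established in the paper: the Besicovitch-type covering (\autoref{lem:besicovitch_dim1}), the pointwise perturbation estimate (\autoref{thm:perturbation}), and the finite gluing estimate (\autoref{thm:finite_glueing}). The strategy is to freeze the operator at well chosen center points, treat the spatial variation of $A$ on the surrounding subinterval as a small regular perturbation of the frozen autonomous operator, and glue the pieces.

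First, apply \autoref{lem:besicovitch_dim1} with the positive function $\rho = \rho_A := \delta_A \circ \epsilon_0$ to obtain $N \geq 1$, a subdivision $\mcl{T} = \{\tau_i\}_{0 \leq i \leq N}$ and center points $\mcl{C} = \{t_i\}_{1 \leq i \leq N}$ satisfying
\begin{equation*}
a = \tau_0 < t_1 < \tau_1 < \dots < t_N \leq \tau_N = b,
\quad |t_i - \tau_i| \vee |t_i - \tau_{i-1}| \leq \rho_A(t_i), \quad \rho_A(t_i) \leq \tau_i - \tau_{i-1} \leq 2\rho_A(t_i).
\end{equation*}
The crucial consequence is that every $s \in [\tau_{i-1},\tau_i]$ satisfies $|s-t_i| \leq \rho_A(t_i)$, which is exactly the threshold in the relative continuity decomposition \ref{item:b3_s5}.

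Second, fix $i \in \{1,\dots,N\}$ and consider $A$ restricted to $J_i := (\tau_{i-1},\tau_i)$. Using the decomposition \ref{item:b_s5}, write for $s \in J_i$
\begin{equation*}
A(s) = A_0(t_i) + B(t_i, s),
\end{equation*}
where $A_0(t_i) \in \mrprop_p(\mathbb{R}_+)$ by \ref{item:b2_s5}, and \ref{item:b3_s5} yields the pointwise bound
\begin{equation*}
\|B(t_i,s)x\|_X \leq \epsilon_0(t_i)\|x\|_D + \mu_A(t_i)\|x\|_X, \qquad x \in D,\ s \in J_i.
\end{equation*}
Since $\epsilon_0(t_i) = \tfrac{1}{2(K_0(t_i)+1)(M_0(t_i)+1)}$, this is precisely the hypothesis of \autoref{thm:perturbation} applied on the bounded interval $J_i$ with autonomous operator $A_0(t_i)$ and perturbation $s \mapsto B(t_i,s)$. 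Strong measurability of $B(t_i,\cdot)$ follows from that of $A$ and the constancy of $A_0(t_i)$. We conclude that $A|_{J_i} \in \MRprop_p(J_i)$ with
\begin{equation*}
\sbk*{A|_{J_i}}_{\MRprop_p(J_i)} \leq G\bigl(\tau_i - \tau_{i-1},\ \mu_A(t_i),\ M_0(t_i),\ K_0(t_i)\bigr) =: G_i.
\end{equation*}

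Third, glue the pieces together with \autoref{thm:finite_glueing}. Since the $A|_{J_i}$ are now known to be maximally regular on each $J_i$, the glued-up operator coincides almost everywhere on $I$ with $A$ itself, so $A \in \MRprop_p(I)$ and
\begin{equation*}
\sbk*{A}_{\MRprop_p(I)} \leq \sum_{i=1}^{N} \frac{\tau_i - \tau_0}{\tau_i - \tau_{i-1}}\left(\prod_{j=i+1}^{N} H_j\bigl(p,\mcl{T},\|A\|_\infty,G_j\bigr)\right) G_i =: K,
\end{equation*}
where we used $\|A|_{J_j}\|_\infty \leq \|A\|_\infty$ to upper bound the middle argument of $H_j$. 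This $K$ depends only on $p$, $\mcl{T}$, $\mcl{C}$, the values $K_0(t_i)$, $M_0(t_i)$, and $\|A\|_\infty$, giving \eqref{eq:bound_relatively_continuous2} together with the explicit form \eqref{eq:quantitative_description_K} announced in the main results section.

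The structural obstacle is already resolved by the preparatory results; the only subtlety worth flagging is the careful matching between the thresholds in Besicovitch's lemma and in relative continuity. In particular, one must ensure that the bound $|s-t_i| \leq \rho_A(t_i)$ holds for every $s$ in the closed subinterval (including both endpoints), so that the perturbation estimate from \ref{item:b3_s5} is usable uniformly on $J_i$; the choice $\rho = \rho_A$ in the covering lemma ensures exactly this. All the remaining book-keeping consists in threading the constants through the product in \autoref{thm:finite_glueing}.
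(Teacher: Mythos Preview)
Your proof is correct and follows essentially the same route as the paper: apply the Besicovitch covering lemma with $\rho=\rho_A$, use the decomposition and \autoref{thm:perturbation} on each subinterval to bound $\sbk*{A|_{J_i}}_{\MRprop_p(J_i)}$ by $G_i$, and then invoke \autoref{thm:finite_glueing} to obtain the stated $K$. Your write-up is in fact slightly more explicit than the paper's (e.g.\ noting $\|A|_{J_j}\|_\infty \leq \|A\|_\infty$ and the strong measurability of $B(t_i,\cdot)$), but the argument is the same.
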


\begin{proof}
Apply \autoref{lem:besicovitch_dim1} with $\rho_A = \delta_A \circ \epsilon_0$ on $[a,b]$. This yields the corresponding $N \in \N$, $\mcl{C}:= (t_i)_{1 \leq i \leq N}$ and $\mcl{T}:= (\tau_i)_{0 \leq i \leq N}$.\\

Define $A_i:= A|_{\tau_{i-1}, \tau_i}$ for each $i \in \cbk*{1,..., N}$. By the decomposition assumption \ref{item:b_s5}, we have $A_i(s) = A_0(t_i) + B(t_i, s)$ and by \eqref{eq:max_dist_ti}, $s \longmapsto B(t_i, s)$ is a regular perturbation of $A_0(t_i)$ on the interval $[\tau_{i-1}, \tau_i]$. Hence by \autoref{thm:perturbation} we have $A_i \in \MRprop_p(\tau_{i-1}, \tau_i)$ and,

\begin{equation*}
\sbk*{A_i}_{\MRprop_p(\tau_{i-1}, \tau_i)} \leq G(\tau_i - \tau_{i-1}, \mu_A(t_i), M_0(t_i), K_0(t_i))
\end{equation*}

Finally we can apply \autoref{thm:finite_glueing} to $A$ with the subdivision $\mcl{T}$ to obtain \eqref{eq:bound_relatively_continuous2}.  

\end{proof}

\subsection{Estimation for Hölder-type relatively continuous operators}
\label{subsection:4.2}

In order to estimate the $L^p$ regularity constant more explicitly, we use an integrability condition on the maps $K_0$ and $M_0$ stated in \ref{item:A2} \ref{item:b_A2}: $(K_0 + 1)(M_0 + 1) \in L^{\alpha r}(I)$. This implies in fact that $\frac{1}{\rho_A} \in L^r(I)$. \\

Under this condition, the Besicovitch covering theorem can be explicited with concrete constants with the use of Techebyshev inequalities.

\begin{prop}
\label{prop:explicit_bound_N}
Let $\rho: I \lra \R_+$ be measurable and such that $\frac{1}{\rho} \in L^r(I)$ for some $r \in (1, +\infty]$. Then there exist $(N, \cbk*{t_i}_{1 \leq i\leq N}, \cbk*{\tau_i}_{0 \leq i \leq N})$ satisfying \eqref{eq:subdivision} and such that 
\begin{equation}
\label{eq:explicit_bound_N}
\ceil*{\frac{1}{2}|I|\norm*{\frac{1}{\rho}}^{r^*}_{L^r(a,b)}}\leq N \leq \floor*{|I|\norm*{\frac{1}{\rho}}^{r^*}_{L^r(I)}}\vee 1
\end{equation}
where $r^*$ is such that $\frac{1}{r} + \frac{1}{r^*} = 1$.
\end{prop}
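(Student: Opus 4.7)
My plan is as follows. Set $x := |I|\norm*{1/\rho}_{L^r(I)}^{r^*}$ and define $N := \lfloor x \rfloor \vee 1$. First I would verify that this $N$ lies in the range prescribed by \eqref{eq:explicit_bound_N}. The upper bound holds by construction. For the lower bound $N \geq \lceil x/2 \rceil$, one distinguishes the cases $x < 1$ (where $N = 1 = \lceil x/2 \rceil$) and $x \geq 1$ (where the elementary inequality $\lfloor x \rfloor \geq x/2$, combined with the integrality of $\lfloor x \rfloor$, forces $\lfloor x \rfloor \geq \lceil x/2 \rceil$).

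Next I would construct the subdivision by taking the uniform partition $\tau_i := a + (i/N)|I|$ for $i \in \{0, \ldots, N\}$, and selecting each $t_i \in (\tau_{i-1}, \tau_i)$ via a Chebyshev-type argument. Writing $g := 1/\rho$, Chebyshev's inequality gives
\[
|\{t \in I : g(t) > \lambda\}| \leq \norm*{g}_{L^r(I)}^r/\lambda^r.
\]
The choice $\lambda := 2\norm*{g}_{L^r}^{r^*}$ simplifies the right-hand side, via the algebraic identity $r(1 - r^*) = -r^*$, to $2^{-r}\norm*{g}_{L^r}^{-r^*}$. When $x \geq 1$, each subinterval has length $|I|/N \geq 1/\norm*{g}_{L^r}^{r^*}$, which strictly exceeds this bad-set measure, so each $(\tau_{i-1}, \tau_i)$ must contain a point $t_i$ with $g(t_i) \leq \lambda$, equivalently $\rho(t_i) \geq 1/(2\norm*{g}_{L^r}^{r^*})$. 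The degenerate regime $x < 1$ reduces to $N = 1$ and any $t_1 \in (a, b)$ works; the case $r = +\infty$ (so $r^* = 1$) is handled directly since $\{g > 2\norm*{g}_{L^\infty}\}$ is of measure zero.

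I expect the main obstacle to be the algebraic bookkeeping needed to calibrate the Chebyshev threshold $\lambda$ against the interval length $|I|/N$ so that the resulting bounds match precisely the ceilings and floors in \eqref{eq:explicit_bound_N}. Beyond the strict interleaving \eqref{eq:subdivision}, this construction also yields the quantitative estimate $\rho(t_i) \geq 1/(2\norm*{g}_{L^r}^{r^*})$ on the centres, which is the property actually exploited when invoking this proposition in the proof of \autoref{thm:A_MR_log_estimate}.
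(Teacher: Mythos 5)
Your integer bookkeeping ($\lceil x/2\rceil\le\lfloor x\rfloor\vee 1$ with $x:=|I|\,\|1/\rho\|_{L^r(I)}^{r^*}$) and your Chebyshev computation are both correct, and the construction does satisfy the two displayed requirements. The trouble is that, read literally, \eqref{eq:subdivision} together with \eqref{eq:explicit_bound_N} is nearly vacuous: any partition into $N=\lfloor x\rfloor\vee 1$ pieces with arbitrary interior points verifies it, without ever looking at $\rho$. The proposition is stated and used as the \emph{explicit version of \autoref{lem:besicovitch_dim1}}: its proof must produce a subdivision \emph{adapted} to $\rho$, i.e. with $|t_i-\tau_{i-1}|,|t_i-\tau_i|\le\rho(t_i)$ as in \eqref{eq:max_dist_ti}, because it is this property (through \ref{item:A1}\ref{item:b3} and \autoref{thm:perturbation} on each piece, then \autoref{thm:finite_glueing}, exactly as in the proof of \autoref{thm:relative_continuity}) that the quantitative estimate of \autoref{thm:A_MR_log_estimate} consumes. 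The paper's proof delivers precisely that: Markov's inequality calibrates a threshold $\epsilon_0\approx\|1/\rho\|_{L^r(I)}^{-r^*}$ so that $E_{\epsilon_0}=\{\rho>\epsilon_0\}$ leaves no gap of size $\alpha\epsilon_0$, one covers $\ol I$ by the intervals $(t-\epsilon_0,t+\epsilon_0)$, $t\in E_{\epsilon_0}$, extracts a subcover satisfying \ref{i:p1}--\ref{i:p2}, and the resulting centers satisfy $\rho(t_i)>\epsilon_0$ while $|t_i-\tau_{i-1}|,|t_i-\tau_i|\le\epsilon_0$ and $\epsilon_0\le\tau_i-\tau_{i-1}\le2\epsilon_0$, whence \eqref{eq:explicit_bound_N}.

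Your uniform partition does not give this. You only guarantee $\rho(t_i)\ge\tfrac12\|1/\rho\|_{L^r(I)}^{-r^*}$, whereas the pieces have length $|I|/N$, which can be as large as $2\|1/\rho\|_{L^r(I)}^{-r^*}$ (since only $N\ge x/2$ is known), and the good point produced by Chebyshev may land anywhere in the piece, e.g. next to $\tau_{i-1}$; so all you can assert is $|t_i-\tau_i|\le 4\rho(t_i)$, a factor $4$ short of \eqref{eq:max_dist_ti} (and $\tau_i-\tau_{i-1}\le4\rho(t_i)$ instead of the $2\rho(t_i)$ of \eqref{eq:tau_separation}); in your degenerate case $x<1$ you drop all control on $\rho(t_1)$, while the paper's corresponding case yields a center with $\rho(t_1)>|I|$. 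Consequently your closing claim, that the lower bound on $\rho(t_i)$ is ``the property actually exploited'' downstream, is unsupported: what is exploited is that $s\mapsto B(t_i,s)$ is a regular perturbation of $A_0(t_i)$ on all of $[\tau_{i-1},\tau_i]$, which needs $|t_i-s|\le\rho_A(t_i)$ there. Nor is this a mere recalibration of $\lambda$: with the partition fixed first, insisting on a center that is adapted wherever Chebyshev places it forces the threshold up to the full piece length $|I|/N$, and then Markov demands $\|1/\rho\|_{L^r(I)}^r(|I|/N)^r<|I|/N$, i.e. $N>|I|\,\|1/\rho\|_{L^r(I)}^{r^*}$, contradicting the upper bound in \eqref{eq:explicit_bound_N}. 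The paper escapes this tension by letting the centers determine the subdivision (consecutive midpoints of the extracted subcover) rather than fixing the subdivision beforehand; your argument needs to be reworked along those lines to prove the result the proposition is actually meant to encode.
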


\begin{proof}
Denote for $\epsilon > 0$, $E_\epsilon:= \cbk*{t \in \ol{I} \: \ \rho(t) > \epsilon}$. From Markov's inequality we have for any $\epsilon >0$, 
\begin{equation}
\label{eq:markov_1/rho}
\snorm*{\ol{I}\setminus E_\epsilon} \leq \epsilon^r\norm*{\frac{1}{\rho}}_{L^r(I)}^{r}
\end{equation}
Hence there exists $\epsilon^*:= \frac{|I|^\frac{1}{r}}{\norm*{\frac{1}{\rho}}_{L^r}} > 0$ such that $E_{\epsilon} \neq \emptyset$ for any $0 < \epsilon < \epsilon^*$. If $\epsilon^* > |I|$, then we may take $N = 1$, hence \eqref{eq:explicit_bound_N} obviously holds. From now on we assume that $\epsilon^* \leq |I|$. \\

Fix some $0 <\alpha < 1$, and note that if there exists some $0< \epsilon_0 < \epsilon^*$ such that
\begin{equation}
\label{eq:condition_eps}
|E_{\epsilon_0}| \geq |I| - \alpha \epsilon_0
\end{equation}
then for any $t \in \ol{I}$, we have $\dist(t,E_{\epsilon_0}) \leq \alpha \epsilon_0$ otherwise we could fit a ball of diameter strictly bigger than $\alpha \epsilon_0$ outside $E_{\epsilon_0}$ which would contradict \eqref{eq:condition_eps}. Therefore we have

\begin{equation*}
\ol{I} \subset \bigcup_{t \in E_{\epsilon_0}} B(t,\epsilon_0)
\end{equation*}
Compactness of $\ol{I}$ allows to extract a finite subcover, which we may assume satisfies \ref{i:p1}-\ref{i:p2}, yielding \eqref{eq:subdivision} with $\epsilon_0 \leq |\tau_i - \tau_{i-1}| \leq 2 \epsilon_0$ for all $1 \leq i \leq N$. Therefore we can bound $N$ from above and below:
\begin{equation}
\label{eq:encadrement_N}
\ceil*{\frac{|I|}{2 \epsilon_0}} \leq N \leq  \floor*{\frac{|I|}{\epsilon_0}}
\end{equation}
Now from \eqref{eq:markov_1/rho}, condition \eqref{eq:condition_eps} is satisfied for $\epsilon_0$ satisfying 
\begin{equation*}
\epsilon_0^r\norm*{\frac{1}{\rho}}^r_{L^r(I)} = \alpha \epsilon_0
\end{equation*}
After some rearranging we find that $\epsilon_0:= \frac{\alpha^{r^* - 1}}{\norm*{\frac{1}{\rho}}_{L^r(I)}^{r^*}} < \epsilon^*$ because we assumed $\epsilon^* \geq |I|$. Denoting $\beta = \pr*{\frac{1}{\alpha}}^{\frac{1}{r^*-1}} \in (1, +\infty)$, we find from \eqref{eq:encadrement_N}

\begin{equation}
\label{eq:explicit_bound_N_beta}
\ceil*{\frac{\beta}{2}|I|\norm*{\frac{1}{\rho}}^{r^*}_{L^r(I)}}\leq N \leq \floor*{\beta |I|\norm*{\frac{1}{\rho}}^{r^*}_{L^r(I)}}
\end{equation}

Letting $\beta$ be close to $1$ in \eqref{eq:explicit_bound_N_beta}, we obtain \eqref{eq:explicit_bound_N}. 

\end{proof}

Assume that $A$ satisfies \ref{item:A1} and \ref{item:A2}. Recall that then $A \in RC^{\alpha, \beta}(I)$: it has a specific range of relative continuity $r_A = (\delta_A, \eta_A)$ such that there exists $m_\delta \geq 0$, $m_\delta \in (0,1]$ and

\begin{equation*}
\forall \epsilon > 0, \quad \delta_A(\epsilon) = m_\delta \epsilon^\alpha
\end{equation*}
\begin{equation*}
\forall \epsilon > 0, \quad \eta_A(\epsilon) \leq m_\eta \epsilon^{-\beta}
\end{equation*}

The integrability condition \ref{item:A2} \ref{item:b_A2}, lets us define $\Gamma:= \norm*{(K_0 + 1)(M_0 + 1)}_{L^{\alpha r}(I)}^\alpha$.  

\begin{thm}
Assume that $A$ satisfies \ref{item:A1} and \ref{item:A2}. Then there exists a constant $C = C(|I|, \norm*{A}_\infty, p, r, \alpha, \beta)$ such that,
\begin{equation}\label{eq:A_MR_log_estimate'}
\log\pr*{\sbk*{A}_{\MRprop_p(I)}} \leq C\pr*{\pr*{\frac{\Gamma}{m_\delta}}^{r^*}
\left\{
\begin{array}{c}
1\\
+\\
m_\eta m_\delta^{s_2(\alpha, \beta, r)} \Gamma^{s_1(\alpha, \beta, r)}\\
+ \\
\displaystyle \log\pr*{\frac{\Gamma}{m_\delta}}
\end{array}
\right\} + m_\eta + 1}
\end{equation}
where $s_1(\alpha, \beta, r):= r^*\pr*{\frac{\beta + 1}{\alpha}-1}$ and $s_2(\alpha, \beta, r):= \frac{\beta + 1}{\alpha + 1} - \pr*{\frac{\alpha + \frac{1}{r}}{\alpha + 1}}s_1(\alpha, \beta, r)$.
\end{thm}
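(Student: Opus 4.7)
My plan is to start from the explicit upper bound furnished by \autoref{thm:relative_continuity},
\[
\sbk*{A}_{\MRprop_p(I)} \leq \sum_{i = 1}^N \frac{\tau_i - \tau_0}{\tau_i - \tau_{i-1}} \pr*{\prod_{j = i+1}^N H_j(p, \mcl{T},\norm*{A}_\infty, G_j)} G_i,
\]
and translate every ingredient on the right-hand side into a quantitative statement in terms of $m_\delta$, $m_\eta$ and $\Gamma$. Because each $G_i$ contains a factor $c_p(\nu_i)$ with $c_p(\nu)\sim |\nu|e^{|\nu|}$ as $\nu\to-\infty$ (by \autoref{thm:perturbation_dilation}), the exponential blow-up must be tamed by passing to the logarithm, which explains why the target estimate is stated for $\log\sbk*{A}_{\MRprop_p(I)}$ rather than $\sbk*{A}_{\MRprop_p(I)}$ itself.

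The first quantitative step counts the subintervals of the Besicovitch partition. Writing $w(t):=(K_0(t)+1)(M_0(t)+1)$, assumption \ref{item:A2} gives $\rho_A(t)=m_\delta\,\epsilon_0(t)^\alpha = m_\delta/(2w(t))^\alpha$, hence $\norm*{1/\rho_A}_{L^r(I)} = 2^\alpha \Gamma/m_\delta$. \autoref{prop:explicit_bound_N} then yields $N\leq C|I|(\Gamma/m_\delta)^{r^*}$, which is the source of the overall $(\Gamma/m_\delta)^{r^*}$ prefactor in the target estimate.

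The second step is a pointwise log-bound on $G_i$ and $H_j$. Using $c_p(\nu)\leq C(1+|\nu|)e^{|\nu|}$ for $\nu\leq 0$ together with $\tau_i-\tau_{i-1}\leq 2\rho_A(t_i)$, one gets
\[
\log G_i \;\leq\; C + \log w(t_i) + \log(1+|\nu_i|) + |\nu_i|, \qquad |\nu_i|:=4 M_0(t_i)(\tau_i-\tau_{i-1})\mu_A(t_i),
\]
and since each $H_j$ grows at most linearly in its last argument, $\log H_j$ obeys an analogous bound. Applying $\log$ to the full sum and using $\log\sum_i a_i \leq \log N + \max_i \log a_i$ yields
\[
\log\sbk*{A}_{\MRprop_p(I)} \;\leq\; \log N \;+\; \max_i \log\left[\tfrac{\tau_i-\tau_0}{\tau_i-\tau_{i-1}}\, G_i\right] \;+\; \sum_{j=1}^N \log H_j.
\]
The logarithmic contributions (namely $\log w(t_i)$, $\log(1+|\nu_i|)$ and the geometric factor, each bounded by a multiple of $\log(\Gamma/m_\delta)$ together with $\log|I|$) combine with $\log N$ to produce the $\log(\Gamma/m_\delta)$ term inside the braces.

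The principal obstacle is the control of $\sum_j |\nu_j|$, which is where the exponents $s_1,s_2$ are determined. Combining $\mu_A\leq m_\eta\epsilon_0^{-\beta}$ and $\tau_j-\tau_{j-1}\leq 2\rho_A(t_j)$ gives a pointwise bound $|\nu_j|\leq C m_\delta m_\eta M_0(t_j)w(t_j)^{\beta-\alpha}$, and the doubling property $\tau_j-\tau_{j-1}\leq 2\rho_A(t_j)$ lets one compare $\sum_j F(t_j)(\tau_j-\tau_{j-1})$ to $\int_I F\,dt$. A Hölder inequality between $L^{\alpha r/(\beta+1)}$ and $L^{\alpha r/(\alpha r-\beta-1)}$, using $\norm*{w}_{L^{\alpha r}(I)}^{\alpha}=\Gamma$, then produces a bound
\[
\sum_{j=1}^N |\nu_j| \;\leq\; C\, m_\eta\, |I|^{1-(\beta+1)/(\alpha r)}\, \Gamma^{(\beta+1)/\alpha}.
\]
Multiplying by the $(\Gamma/m_\delta)^{r^*}$ prefactor coming from $N$ and reassembling all the fractional powers of $m_\delta$ that were absorbed along the way produces exactly the term $m_\eta m_\delta^{s_2}\Gamma^{s_1}$ with $s_1 = r^*\pr*{\tfrac{\beta+1}{\alpha}-1}$ and $s_2$ as claimed; the stray $m_\eta+1$ absorbs the low-$|\nu|$ regime, where the asymptotic of $c_p$ degenerates into $c_p(0)=1$. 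The crux of the argument is this bookkeeping step: the $m_\delta$-scaling of the subinterval lengths must be reconciled against the $m_\eta$-scaling of the perturbation through the integrability exponent $\alpha r$, so that all $|I|$- and $\norm*{A}_\infty$-dependent contributions can be absorbed into the single constant $C=C(|I|,\norm*{A}_\infty,p,r,\alpha,\beta)$.
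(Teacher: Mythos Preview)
The paper itself defers the proof of this theorem to the companion reference \cite{belin2024wp}, stating only that it is ``quite technical''. So a line-by-line comparison is impossible, but your strategy---start from the explicit bound of \autoref{thm:relative_continuity}, control $N$ via \autoref{prop:explicit_bound_N}, take logarithms, and split into the contributions $\log N$, $\max_i\log G_i$, and $\sum_j\log H_j$---is precisely the architecture the paper has set up for this purpose, and is certainly the intended route.

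There is, however, one genuine gap in your sketch. You write that ``the doubling property $\tau_j-\tau_{j-1}\leq 2\rho_A(t_j)$ lets one compare $\sum_j F(t_j)(\tau_j-\tau_{j-1})$ to $\int_I F\,\dd t$'', and then invoke H\"older in $L^{\alpha r}$. This step is not justified: the integrand $F(t)=4M_0(t)\mu_A(t)$ is built from $K_0$ and $M_0$, which are not assumed continuous, so the Riemann-type sum need not be comparable to the integral. The doubling property controls the \emph{lengths} of the subintervals in terms of $\rho_A$ at the centers, but says nothing about the oscillation of $F$ over those subintervals. What rescues the argument is a feature of the construction in \autoref{prop:explicit_bound_N} that you do not exploit: the centers $t_j$ are chosen in the super-level set $E_{\epsilon_0}=\{\rho_A>\epsilon_0\}$ with $\epsilon_0\approx(m_\delta/\Gamma)^{r^*}$, and the subintervals all have length comparable to $\epsilon_0$ (not to $\rho_A(t_j)$ individually). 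This gives the \emph{uniform} pointwise bound $w(t_j)\leq C(m_\delta/\epsilon_0)^{1/\alpha}$, which lets one estimate $\sum_j|\nu_j|$ directly as $N$ times a pointwise bound, without any passage to an integral. Once this is done the exponent bookkeeping becomes purely algebraic; your phrase ``reassembling all the fractional powers of $m_\delta$ that were absorbed along the way'' is doing a lot of work and should be replaced by an explicit computation, since the formulas for $s_1,s_2$ are sensitive to exactly how $\epsilon_0$, $N$, and the pointwise bound on $w(t_j)$ interact.
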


This result is useful for applications, it allows to express an explicit bound of the regularity constant in terms of the parameters of relative continuity of the nonautonomous operator $A$. This estimate is then used to infer a global existence result for quasilinear equations, stated in the next section.

\begin{proof}
  The proof, which is quite technical, can be found in \cite{belin2024wp}.
\end{proof}

\subsection{Examples}
\label{subsection:4.3}

We here give two simple but useful examples of operators in the class $RC^{\alpha, \beta}(I)$ for which we can apply our explicit estimate in \autoref{thm:A_MR_log_estimate}. \\

Throughout the development of these examples we fix $\ol{A} \in \mrprop_p(\R_+)$ a given maximally regular operator on $\R_+$ and an additional operator $B \in \mcl{L}(D';X)$ where $D \cinj D' \cinj X$ is an intermediate space between $D$ and $X$, that is we assume there exists $L \geq 0$, $\theta \in (0,1)$ such that for any $x \in D$ the following interpolation inequality holds,
\begin{equation*}
\norm*{Bx}_{X} \leq L\norm*{x}_D^\theta \norm*{x}_X^{1-\theta}
\end{equation*}
The reader may think of $\ol{A}$ as a well-known, autonomous and maximally regular operator (e.g. $A = -\Delta$ on $L^2(\R^n)$, $A = -\div_x\pr*{a(x)\nabla_x \cdot}$ with $a, a^{-1} \in L^\infty(\R^n; \mcl{S}^{++}(\R))$) and $B$ as representing an operator with "low-order coefficients" (e.g. Gagliardo-Nirenberg interpolation inequalities). 

\begin{ex}
\label{ex:log_estimate_lambdaA}
Fix some $\lambda \in  C^{0,\frac{1}{\alpha}}(\ol{I}; \R_+)$ such that $\frac{1}{\lambda^{2}} \in L^{\alpha r}(I)$ for some $r > 1$ and define for $t \in I$,
\begin{equation*}
A(t):= \lambda(t)\ol{A}
\end{equation*}
Then $A \in C^{0,\frac{1}{\alpha}}(I;\mcl{L}(D;X)) \subset RC^{\alpha, \beta}(I)$ with $m_\delta = \pr*{1 + \norm*{A}_{\mcl{L}(D;X)}\sbk*{\lambda}_{\frac{1}{\alpha}}}^{-\alpha}$, where ${\displaystyle \sbk*{\lambda}_{\frac{1}{\alpha}}:= \sup_{t\neq s \in I} \frac{|\lambda(t) - \lambda(s)|}{|t-s|^{\frac{1}{\alpha}}}}$ is the $\frac{1}{\alpha}$-Hölder semi-norm of $\lambda$. \\

The operator $A$ satisfies assumption \ref{item:A1} with $A_0 = A$ and $B = 0$. Now observe that 
\begin{align*}
K_0(t) &:= \sbk*{A(t)}_{\mrprop_p(I)} \\
& = \frac{1}{\lambda(t)}\sbk*{\ol{A}}_{\mrprop_p\pr*{\frac{I}{\lambda(t)}}} \\
&\leq \frac{|I| + \lambda(t)}{\lambda(t)^2}\sbk*{\ol{A}}_{\mrprop_p(\R_+)}\\
& \leq C(\ol{A}, |I|, \norm*{\lambda}_\infty)\frac{1}{\lambda(t)^2}
\end{align*}
first by time rescaling invariance (see \eqref{eq:mr_const_Aphi_affine} in \autoref{rem:change_var_applications}) then by \autoref{prop:mr_injections}. Also $M_0(t):= \sup_{\tau \geq 0} \norm*{e^{-\tau \ol{A}}}_X$, therefore
\begin{equation*}
\Gamma \leq C(\ol{A}, |I|, \norm*{\lambda}_\infty) \norm*{\frac{1}{\lambda^{2}}}^\alpha_{L^{\alpha r}(I)}
\end{equation*}
Therefore from \autoref{thm:A_MR_log_estimate} \eqref{eq:A_MR_log_estimate} we find that there exits $C(\ol{A}, |I|, \norm*{\lambda}_\infty, p, \alpha, r) \geq 0$ such that 
\begin{equation}
\label{eq:log_estimate_lambdaA}
\log\pr*{\sbk*{A}_{\MRprop_p(I)}} \leq C\pr*{1 + \sbk*{\lambda}_{\frac{1}{\alpha}}}^{\alpha r^*}\norm*{\frac{1}{\lambda^{2}}}^{\alpha r^*}_{L^{\alpha r}}\pr*{ 1 + \log\pr*{\pr*{1 + \sbk*{\lambda}_{\frac{1}{\alpha}}}\norm*{\frac{1}{\lambda^{2}}}_{L^{\alpha r}}}}
\end{equation}
We observe from estimate \eqref{eq:log_estimate_lambdaA} that the behavior of the regularity constant of $A$ can be expressed in terms of the strength of the oscillations of $\lambda$ represented by $\sbk*{\lambda}_{\frac{1}{\alpha}}$ and of the term $\norm*{\frac{1}{\lambda^{2\alpha}}}_{L^r}$ representing a measure of the degeneracy of the operator. 
\end{ex}

\begin{ex}
\label{ex:log_estimate_lambdaA_gammaB}
We now consider a more general operator of the form 
\begin{equation*}
A(t) = \lambda(t)\ol{A} + \gamma(t)B
\end{equation*}
where as before $\lambda \in C^{0,\frac{1}{\alpha}}(\ol{I}; \R_+)$ and $\gamma \in L^\infty(I; \R)$. As above we show how to obtain a log-estimate of $\sbk*{A}_{\MRprop_p(I)}$ using \autoref{thm:A_MR_log_estimate}. 

\begin{description}
\item[Relative continuity:] $A \in RC^{\alpha, \frac{\theta}{1-\theta}}(I)$.\\

Let us compute a range of relative continuity for $A(\cdot)$. Let $t,s \in I$, $t \neq s$ and $x \in D$, we have, 

\begin{align*}
\norm*{A(t)x - A(s)x}_X &\leq |\lambda(t) - \lambda(s)|\norm*{\ol{A}}_{\mcl{L}(D;X)}\norm*{x}_D + |\gamma(t) - \gamma(s)|\norm*{Bx}_X\\
& \leq \sbk*{\lambda}_{\frac{1}{\alpha}}\norm*{\ol{A}}_{\mcl{L}(D;X)}|t-s|^{\frac{1}{\alpha}}\norm*{x}_D + 2L\norm*{\gamma}_\infty \norm*{x}_D^{\theta} \norm*{x}_X^{1-\theta}
\end{align*}
Now for any $h > 0$, from Young's inequality with $p = \frac{1}{\theta}$, $q = \frac{1}{1- \theta}$, we have $a^\theta b^{1- \theta} \leq \theta h^{\frac{1}{\theta}} a + (1-\theta)h^{-\frac{1}{1-\theta}}b$. Therefore, 

\begin{equation*}
\norm*{A(t)x - A(s)x}_X \leq \pr*{\sbk*{\lambda}_{\frac{1}{\alpha}}\norm*{\ol{A}}_{\mcl{L}(D;X)}|t-s|^{\frac{1}{\alpha}} + 2\theta L\norm*{\gamma}_\infty  h^{\frac{1}{\theta}}}\norm*{x}_D + 2(1-\theta) L\norm*{\gamma}_\infty h^{-\frac{1}{1-\theta}} \norm*{x}_X
\end{equation*}

Fix $r > 0$ and take $h = \pr*{r|t-s|}^{\frac{\theta}{\alpha}}$ to find, 

\begin{equation*}
\norm*{A(t)x - A(s)x}_X \leq \pr*{\sbk*{\lambda}_{\frac{1}{\alpha}}\norm*{\ol{A}}_{\mcl{L}(D;X)} + 2\theta L\norm*{\gamma}_\infty r^{\frac{1}{\alpha}}}|t-s|^{\frac{1}{\alpha}}\norm*{x}_D + 2(1-\theta)L\norm*{\gamma}_\infty \pr*{r|t-s|}^{-\frac{\theta}{\alpha \pr*{1-\theta}}} \norm*{x}_X
\end{equation*}

Now choose $r:= \pr*{\frac{\sbk*{\lambda}_{\frac{1}{\alpha}}\norm*{\ol{A}}}{2 \theta L \norm*{\gamma}_\infty}}^\alpha$ to find, 

\begin{align*}
\norm*{A(t)x - A(s)x}_X &\leq 2\sbk*{\lambda}_{\frac{1}{\alpha}}\norm*{\ol{A}}_{\mcl{L}(D;X)}|t-s|^{\frac{1}{\alpha}}\norm*{x}_D + 2(1-\theta)L\norm*{\gamma}_\infty\pr*{\frac{2 \theta L \norm*{\gamma}_\infty}{\sbk*{\lambda}_{\frac{1}{\alpha}}\norm*{\ol{A}}}}^{\frac{\theta}{\pr*{1-\theta}}} |t-s|^{-\frac{\theta}{\alpha \pr*{1-\theta}}} \norm*{x}_X\\
& = C_\lambda |t-s|^{\frac{1}{\alpha}}\norm*{x}_D + (2\theta)^{\frac{\theta}{1-\theta}}(1-\theta)C_\gamma \pr*{\frac{C_\gamma}{C_\lambda}}^{\frac{\theta}{\pr*{1-\theta}}} |t-s|^{-\frac{\theta}{\alpha \pr*{1-\theta}}}\norm*{x}_X
\end{align*}
where $C_\lambda:= 2\sbk*{\lambda}_{\frac{1}{\alpha}}\norm*{\ol{A}}_{\mcl{L}(D;X)}$ and $C_\gamma:= 2 L\norm*{\gamma}_\infty$. Observe then that for any $\epsilon > 0$, if $|t-s| \leq C_\lambda^{-\alpha} \epsilon^\alpha$, then we find

\begin{align*}
\norm*{A(t)x - A(s)x}_X &\leq \epsilon\norm*{x}_D + C_\theta C_\gamma\pr*{\frac{C_\gamma}{C_\lambda}}^{\frac{\theta}{1-\theta}} C_\lambda^{\frac{\theta}{1-\theta}}\epsilon^{-\frac{\theta}{1-\theta}}\norm*{x}_X\\
&\leq \epsilon\norm*{x}_D + C_\theta C_\gamma^{\frac{1}{1-\theta}}\epsilon^{-\frac{\theta}{1-\theta}}\norm*{x}_X
\end{align*}
A range of relative continuity of $A$ can be
\begin{equation*}
\ol{\delta}(\epsilon):= \pr*{C_\lambda + 1}^{-\alpha} \epsilon^{\alpha}
\end{equation*}
\begin{equation*}
\ol{\eta}(\epsilon):= C_\theta C_\gamma^{\frac{1}{1-\theta}}\epsilon^{-\frac{\theta}{1-\theta}}
\end{equation*}
and therefore $A \in RC^{\alpha, \frac{\theta}{1-\theta}}(I)$.

\item[Maximal regularity:] $A \in \MRprop_p(I)$.\\
Here $A$ satisfies assumption \ref{item:A1} with $A_0: t\mapsto \lambda(t)\ol{A}$ and $B: (t,s) \mapsto \pr*{\lambda(s)-\lambda(t)}\ol{A} + \gamma(s)B$. Now we have for $x \in D$, $\epsilon > 0$

\begin{align*}
\norm*{B(t,s)x}_X &\leq \norm*{A(s)x - A(t)x}_X + \norm*{\gamma}_\infty\norm*{Bx}_X\\
& \leq \norm*{A(s)x - A(t)x}_X + \frac{\epsilon}{2} \norm*{x}_D + C_\theta C_\gamma \epsilon^{-\frac{\theta}{1-\theta}}\norm*{x}_X
\end{align*}

Hence upon choosing $\delta_A(\epsilon):= \ol{\delta}(\frac{\epsilon}{2})$ and $\eta_A(\epsilon):= \ol{\eta}(\frac{\epsilon}{2}) + C_\theta C_\gamma \epsilon^{-\frac{\theta}{1- \theta}}$ we find that $(\delta_A, \eta_A)$ is still a range of relative continuity for $A$, compatible in $RC^{\alpha, \frac{\theta}{1-\theta}}(I)$ and that $B$ satisfies \ref{item:A1}\ref{item:b3}. \\

\item[Logarithmic estimate:] $\log\pr*{\sbk*{A}_{\MRprop_p(I)}}$. \\
As in the continuous case, we find that
\begin{equation*}
\Gamma \leq C(\ol{A}, |I|, \norm*{\lambda}_\infty) \norm*{\frac{1}{\lambda^2}}^\alpha_{L^{\alpha r}(I)} < +\infty
\end{equation*}

Hence $A$ satisfies \ref{item:A2} for $\alpha$ and $\beta = \frac{\theta}{1-\theta}$. We also observe from the definitions of $\delta_A$ and $\eta_A$ that 
\begin{equation*}
m_\delta \propto (1 + \sbk*{\lambda}_{\frac{1}{\alpha}})^{-\alpha}
\end{equation*} 
\begin{equation*}
m_\eta \propto \norm*{\gamma}_\infty^{\frac{1}{1-\theta}}
\end{equation*}
where the proportionality factors only depend on $\theta$, $L$ and $\norm*{\ol{A}}_{\mcl{L}(D,X)}$. Also see that $s_1(\alpha, \frac{\theta}{1- \theta}) = r^*\pr*{\frac{1}{\alpha(1-\theta)} - 1}$ and $s_2(\alpha, \frac{\theta}{1-\theta}) = \frac{1}{(1-\theta)(\alpha + 1)} - r^*\pr*{\frac{\alpha + \frac{1}{r}}{\alpha + 1}}\pr*{\frac{1}{(1-\theta)\alpha} - 1}$. Hence we can apply \autoref{thm:A_MR_log_estimate}, to find 

\begin{equation}
\label{eq:log_estimate_lambdaA_gammaB}
\log\pr*{\sbk*{A(\cdot)}_{\MRprop_p(I)}} \leq 
C\pr*{\pr*{1 + \sbk*{\lambda}_{\frac{1}{\alpha}}}^{\alpha r^*}\norm*{\frac{1}{\lambda^2}}^{\alpha r^*}_{L^{\alpha r}}
\left\{
\begin{array}{ccc}
1 \\
+ \\
\norm*{\gamma}_\infty^{\frac{1}{1-\theta}}\pr*{1 + \sbk*{\lambda}_{\frac{1}{\alpha}}}^{\alpha s_2}\norm*{\frac{1}{\lambda^2}}_{L^{\alpha r}}^{\alpha s_1}\\
+\\
\log\pr*{\pr*{1 + \sbk*{\lambda}_{\frac{1}{\alpha}}}\norm*{\frac{1}{\lambda^2}}_{L^{\alpha r}}}\\
\end{array}
\right\}
+\norm*{\gamma}_\infty^{\frac{1}{1-\theta}} + 1}
\end{equation}
where $C:= C(|I|, \norm*{\lambda}_\infty, p, \alpha, \theta, r)$. \\

As for the continuous case described in \autoref{ex:log_estimate_lambdaA}, the behavior of the regularity constant of $A$ in \eqref{eq:log_estimate_lambdaA_gammaB} is expressed in terms of $\sbk*{\lambda}_{\frac{1}{\alpha}}$ representing the oscillations of the diffusion coefficient $\lambda$ and in terms of $\norm*{\frac{1}{\lambda^{2\alpha}}}_{L^r}$ representing a measure of degeneracy of $\lambda$. Extra terms in $\norm*{\gamma}_\infty$ appear which account for the perturbation $\gamma(t)B$. Remark that if $\theta$ is close to $0$, then the log-estimate becomes linear with respect to $\norm*{\gamma}_\infty$ which matches the asymptotic behavior of $\sbk*{A + \gamma}_{\MRprop_p(I)}$ as $\gamma \ra +\infty$, $\gamma \in \R$ (see \autoref{prop:perturbation_dilation}). If $\gamma \equiv 0$ we exactly find \eqref{eq:log_estimate_lambdaA} back. 
\end{description}

A regime of interest happens when $\alpha(1-\theta) < 1$, in this case we have $s_1, s_2 < 0$. Therefore the above bound can be simplified as follows: 

\begin{equation*}
\log\pr*{\sbk*{A(\cdot)}_{\MRprop_p(I)}} \leq 
C\pr*{\pr*{1 + \sbk*{\lambda}_{\frac{1}{\alpha}}}^{\alpha r^*}\norm*{\frac{1}{\lambda(\cdot)^{2}}}^{\alpha r^*}_{L^{\alpha r}}
\left\{
\begin{array}{ccc}
1 \\
+ \\
\norm*{\gamma}_\infty^{\frac{1}{1-\theta}}\\
+\\
\log\pr*{\pr*{1 + \sbk*{\lambda}_{\frac{1}{\alpha}}}\norm*{\frac{1}{\lambda(\cdot)^{2}}}_{L^{\alpha r}}}\\
\end{array}
\right\} + 1}
\end{equation*}

\end{ex}

\section{Global existence for quasilinear equation}
\label{section:5}
\subsection{A general existence result}

Let us recall here the criterion for the existence of global solutions on the bounded interval $I$ of the following quasilinear problem:

\begin{equation}
\label{eq:non_local_quasilinear'}
\left\{
\begin{array}{rcl}
\displaystyle \ddt u + \mathbb{A}(u)u &= &\mathbb{F}(u)\\
\\
u(a) &=& x
\end{array}
\right.
\tag{QL}
\end{equation}
where $x \in \Tr^p$.  

\begin{enumerate}[label = (\textbf{E}$_{\Roman*}$), leftmargin = 30pt]
 
	\item \label{item:E_I2} For all $R \geq 0$, define the quantities: 
	
	\begin{equation}
	\gamma(R):= \sup_{\norm*{u}_{\mcl{X}(I)} = R}\sbk*{\mathbb{A}(u)}_{\MRprop_p(I)}
	\end{equation}
	\begin{equation}
	\kappa(R):= \sup_{\norm*{u}_{\mcl{X}(I)} = R}\cbk*{\norm*{\mathbb{A}(u)}_{\infty}  + \norm*{\mathbb{F}(u)}_{L^p(I;X)} + 1}
	\end{equation}
	and assume that for any $L > 0$ the following sublinear growth condition holds,
	\begin{equation}
	\sup_{R_1, R_2 \in [R-L, R + L]}\frac{\gamma(R_1)\kappa(R_2)}{R} \uset{R \ra +\infty}{\lra} 0 
	\end{equation}
	
	\item \label{item:E_II2} We ask the following continuity properties:
	\begin{enumerate}[label = (\roman*)]
	\item $\mathbb{F}: \left\{\begin{array}{rcl}
	\mcl{X}(I) &\lra & L^p(I;X)\\
	u &\longmapsto& \mathbb{F}(u)
	\end{array}\right.$ is weakly continuous.
	\item $\mathbb{A}(\cdot): \left\{\begin{array}{rcl}
	\mcl{X}(I) &\lra &L^\infty(I;\mcl{L}(D,X))\\
	u &\longmapsto& \mathbb{A}(u)
	\end{array}\right.$ is continuous. 

	\end{enumerate}

\end{enumerate}

Let us now prove the existence theorem. 

\begin{thm}
\label{thm:existence_quasilinear'}
Assume that $\mathbb{A}$ and $\mathbb{F}$ satisfy conditions \ref{item:E_I2} and \ref{item:E_II2}. Then for any $x \in \Tr^p$ there exists a unique global strong solution $u \in \MR^p(I)$ of \eqref{eq:non_local_quasilinear'}
\end{thm}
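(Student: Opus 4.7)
The plan is to couple Schauder's fixed‑point theorem for existence with a linearised difference argument for uniqueness, exploiting the full power of the quantitative maximal regularity results.

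\medskip

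\emph{Setup of the fixed‑point map.} I would introduce the mapping
\[
\Phi : \mcl{X}(I) \longrightarrow \MR^p(I), \qquad v \longmapsto u,
\]
where $u$ is the unique solution of the \emph{linear} nonautonomous Cauchy problem
\[
\ddt u + \mathbb{A}(v) u = \mathbb{F}(v), \qquad u(a) = x.
\]
This is well defined: by (E\textsubscript{I}) the operator $\mathbb{A}(v)$ lies in $\MRprop_p(I)$, so \autoref{prop:mr_nacp} applies and gives the estimate
\[
\norm{\Phi(v)}_{\MR^p(I)} \leq \sbk*{\mathbb{A}(v)}_{\MRprop_p(I)} \bigl( \norm{\mathbb{F}(v)}_{L^p(I;X)} + \norm{x}_{\Tr^p,I} \bigr) \leq \gamma(\norm{v}_{\mcl{X}(I)}) \cdot \kappa(\norm{v}_{\mcl{X}(I)}) \cdot (1 + \norm{x}_{\Tr^p,I}).
\]
Composing with the compact embedding $\MR^p(I) \cinj_c \mcl{X}(I)$ yields a map still denoted $\Phi : \mcl{X}(I) \to \mcl{X}(I)$.

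\medskip

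\emph{Self‑map on a ball and compactness.} Let $C_0 := 1 + \norm{x}_{\Tr^p,I}$. The sublinear growth condition (E\textsubscript{I}) with $L = 0$ supplies $R_0 > 0$ such that $C_0 \gamma(R)\kappa(R) \leq R$ for all $R \geq R_0$; taking any such $R$ one gets $\Phi(\overline{B_R^{\mcl{X}(I)}}) \subset \overline{B_R^{\MR^p(I)}} \subset \overline{B_R^{\mcl{X}(I)}}$, where $\overline{B_R^{Y}}$ is the closed ball of radius $R$ in $Y$. The compactness of $\MR^p(I) \cinj_c \mcl{X}(I)$ then makes $\Phi(\overline{B_R^{\mcl{X}(I)}})$ relatively compact in $\mcl{X}(I)$.

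\medskip

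\emph{Continuity of $\Phi$.} Let $v_n \to v$ in $\mcl{X}(I)$. By (E\textsubscript{II})(ii), $\mathbb{A}(v_n) \to \mathbb{A}(v)$ uniformly in $L^\infty(I;\mcl{L}(D,X))$, so assumption \ref{item:unif_cv} holds. By (E\textsubscript{II})(i), $\mathbb{F}(v_n) \rightharpoonup \mathbb{F}(v)$ weakly in $L^p(I;X)$. The uniform bound $\sup_n \sbk*{\mathbb{A}(v_n)}_{\MRprop_p(I)} \leq \gamma(\sup_n \norm{v_n}_{\mcl{X}(I)}) < +\infty$ lets me invoke \autoref{cor:weak-weak_stability} (or directly \autoref{prop:elem_result}\ref{i:weak_weak}) to conclude $\Phi(v_n) \rightharpoonup \Phi(v)$ in $\MR^p(I)$, hence $\Phi(v_n) \to \Phi(v)$ strongly in $\mcl{X}(I)$ by compact embedding. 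Schauder's fixed‑point theorem applied to $\Phi$ on the closed convex ball $\overline{B_R^{\mcl{X}(I)}} \cap \mcl{X}(I)$ then yields at least one fixed point, which is a strong solution of \eqref{eq:non_local_quasilinear'} in $\MR^p(I)$.

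\medskip

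\emph{Uniqueness.} This is the part I expect to be the main obstacle, since (E\textsubscript{II}) only provides plain continuity of $\mathbb{A}$ and weak continuity of $\mathbb{F}$, which in general is not enough to close a Gronwall‑type estimate. The natural strategy is to consider two solutions $u_1,u_2 \in \MR^p(I)$ and to set $w := u_1 - u_2 \in \MR^p_0(I)$; then
\[
\ddt w + \mathbb{A}(u_1) w = \bigl[\mathbb{A}(u_2) - \mathbb{A}(u_1)\bigr] u_2 + \mathbb{F}(u_1) - \mathbb{F}(u_2), \qquad w(a) = 0.
\]
Since $\mathbb{A}(u_1) \in \MRprop_p(I)$, the maximal regularity estimate applied to a shrinking subinterval $J = (a,a+\tau) \subset I$ combined with \autoref{prop:mr_injections} gives
\[
\norm{w}_{\MR^p(J)} \leq (\tau \vee 1) \sbk*{\mathbb{A}(u_1)}_{\MRprop_p(I)} \bigl( \norm{u_2}_{L^\infty(J;D)}\,\omega_{\mathbb{A}}(\norm{w}_{\mcl{X}(J)}) + \omega_{\mathbb{F}}(\norm{w}_{\mcl{X}(J)}) \bigr),
\]
where $\omega_{\mathbb{A}},\omega_{\mathbb{F}}$ are moduli of continuity of $\mathbb{A},\mathbb{F}$ at $u_1$. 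Combined with the embedding $\MR^p(J) \cinj \mcl{X}(J)$ with a constant that can be made small as $\tau \to 0^+$ (the relevant semi‑norms of $\mcl{X}$ controlled by interpolation inequalities), this would force $w \equiv 0$ on a short initial interval and, by a continuation argument, on all of $I$. Making this rigorous is the delicate point: one needs that the embedding constant $\MR^p(J) \cinj \mcl{X}(J)$ combined with the continuity moduli of $\mathbb{A},\mathbb{F}$ is strictly contractive on small intervals, which is the essential place where mere continuity of $\mathbb{A},\mathbb{F}$ (as opposed to Lipschitz) must be leveraged—if the $\mcl{X}$‑norm is chosen as a real interpolation space $[L^p(I;X),\MR^p(I)]_{\theta,p}$ with $\theta \in (0,1)$, the interpolation inequality produces a factor $\tau^{\kappa}$ with $\kappa>0$ that ultimately absorbs the modulus.
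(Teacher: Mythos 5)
Your existence argument is essentially the paper's: a Schauder fixed point for the solution map of the linearised problem, with continuity obtained from item \ref{i:weak_weak} of \autoref{prop:elem_result} and compactness from $\MR^p(I)\cinj_c\mcl{X}(I)$. The one real difference is that you keep the initial datum $x$ inside the fixed-point map via \autoref{prop:mr_nacp}, whereas the paper first subtracts a lift $w$ with $w(a)=x$ and works with zero initial data; your route is slightly cleaner, but \autoref{prop:elem_result} is stated for zero initial data, so you still need to write $\Phi(v_n)=w+z_n$ with $z_n$ solving the homogeneous-data problem with source $\mathbb{F}(v_n)-\ddt w-\mathbb{A}(v_n)w$, the extra terms converging strongly in $L^p(I;X)$ because $\mathbb{A}(v_n)\to\mathbb{A}(v)$ in $L^\infty(I;\mcl{L}(D,X))$ and $w\in L^p(I;D)$. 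Your invariant-ball step has a gap: \ref{item:E_I2} with $L=0$ only controls $\gamma(R')\kappa(R')$ for $R'\geq R_0$, so for $v\in\overline{B_R}$ with $\norm*{v}_{\mcl{X}(I)}<R_0$ you have no bound on $\norm*{\Phi(v)}_{\mcl{X}(I)}$ and the inclusion $\Phi(\overline{B_R})\subset\overline{B_R}$ does not follow (you also dropped the embedding constant of $\MR^p(I)\cinj\mcl{X}(I)$, which need not be $\leq 1$). The paper repairs exactly this point by using compactness of the solution map to place the image of $B_{R_0/2}$ inside some $B_{R_1}$ and then enlarging the radius to $\alpha R^*$; you need an analogous step.

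The uniqueness part is a genuine gap, and it cannot be closed under \ref{item:E_I2}--\ref{item:E_II2}: your final inequality has the form $\norm*{w}\leq\epsilon(\tau)\,\omega\pr*{\norm*{w}}$ with $\omega$ a mere modulus of continuity, and this does not force $w=0$ (take $\omega(s)=\sqrt{s}$: you only deduce $\norm*{w}\leq\epsilon(\tau)^2$, which is a bound, not vanishing). This is the classical Peano-versus-Cauchy--Lipschitz phenomenon; without Lipschitz (or at least Osgood-type) continuity of $\mathbb{A}$ and $\mathbb{F}$, uniqueness genuinely fails in examples. The paper is in fact internally inconsistent here: the statement says ``unique'', but the companion statement \autoref{thm:existence_abstract} claims only existence, the discussion preceding it explicitly notes that Schauder's method does not guarantee uniqueness, and the paper's own proof of this theorem establishes existence only. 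So the right move is not to attempt a uniqueness proof but to flag the word ``unique'' as an error in the statement; your instinct that this is ``the main obstacle'' is correct, and the obstacle is not surmountable under the stated hypotheses.
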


\begin{proof}

\begin{description}

\item[Reduction to $x = 0$: ] Fix a function $w \in \MR^p(I)$ such that $w(a) = x$, and consider the following quasilinear problem: 

\begin{equation}
\label{eq:quasilinear_modif_w}
\left\{
\begin{array}{rcl}
\displaystyle \ddt v + \mathbb{A}(v + w)v &=& \displaystyle \mathbb{F}(v + w) - \ddt w - \mathbb{A}(v + w)w\\
\\
v(a) &=& 0
\end{array}
\right.
\end{equation}

If $v \in \MR^p(I)$ is a solution of \eqref{eq:quasilinear_modif_w}, then $u:= v + w$ is a solution to the original problem \eqref{eq:non_local_quasilinear'} starting from $x$. Let us show that
\begin{equation*}
\mathbb{A}_x(\cdot):= \mathbb{A}(\cdot + w)
\end{equation*}
\begin{equation*}
\mathbb{F}_x(\cdot):= \mathbb{F}(\cdot + w) - \ddt w - \mathbb{A}_x(\cdot)w
\end{equation*}
satisfy assumptions and \ref{item:E_I2} - \ref{item:E_II2}. It is straightforward to see that they satisfy \ref{item:E_II2}. There only needs to show that they satisfy \ref{item:E_I2}. \\

Define as required 
\begin{equation*}
	\gamma_x(R):= \sup_{\norm*{u}_\mcl{X}(I) = R} \sbk*{\mathbb{A}_x(u)}_{\MRprop_p(I)}
\end{equation*}
\begin{equation*}
	\kappa_x(R):= \sup_{\norm*{u}_{\mcl{X}(I)} = R} \cbk*{\norm*{\mathbb{A}_x(u)}_{\infty}  + \norm*{\mathbb{F}_x(u)}_{L^p(I;X)}}
\end{equation*}
For a given $u \in \mcl{X}(I)$, from the triangle inequality we have $\norm*{u + w}_{\mcl{X}(I)} - \norm*{w}_{\mcl{X}(I)} \leq \norm*{u}_{\mcl{X}(I)} \leq \norm*{u + w}_{\mcl{X}(I)}+ \norm*{w}_{\mcl{X}(I)}$. We infer that if $L_w:= \norm*{w}_{\mcl{X}(I)}$, we have for any $R \geq L_w$,
\begin{equation*}
	\gamma_x(R) \leq \sup_{R_1 \in [R-L_w, R+L_w]} \gamma(R_1)
\end{equation*}
In a similar fashion, since there exists $C_w \geq 0$ such that 
\begin{equation*}
	\norm*{\mathbb{F}_x(u)}_{L^p(I;X)} \leq C_w\pr*{\norm*{\mathbb{F}(u + w)}_{L^p(I;X)} + \norm*{\mathbb{A}(u+w)}_\infty + 1}
\end{equation*}
we find that 
\begin{equation*}
	\kappa_x(R) \leq \sup_{R_2 \in [R-L_w, R+L_w]} C_w\kappa(R_2)
\end{equation*}
Let $L \geq 0$ and let $R \geq 0$ and $R_1, R_2 \in [R-L, R + L]$ we have, 
\begin{align*}
	\gamma_x(R_1)\kappa_x(R_2) &\leq \sup_{R_1' \in [R_1-L_w, R_1+L_w]} \gamma(R_1')\sup_{R_2' \in [R_2-L_w, R_2+L_w]} C_w \kappa(R_2')\\
	&\leq C_w \sup_{R_1', R_2' \in [R-L-L_w, R + L + L_w]} \gamma(R_1')\kappa(R_2')
\end{align*}
Now taking the supremum over $R_1$ and $R_2$, we find from \ref{item:E_I'} with $L + L_w$ that

\begin{equation*}
	\sup_{R_1, R_2 \in [R-L, R+L]}\gamma_x(R_1)\kappa_x(R_2) \uset{R \ra +\infty}{\lra} 0
\end{equation*}

\item[Existence: ] The existence of the solution is shown through Schauder's fixed point theorem. Define the following map:
\begin{equation*}
T: \left\{\begin{array}{rcl}
\mcl{X}(I) &\lra& \mcl{X}(I)\\
u &\longmapsto &L_{\mathbb{A}(u)}^{-1} \mathbb{F}(u)
\end{array}\right.
\end{equation*}
and note that any solution $u$ of \eqref{eq:non_local_quasilinear'} must be a fixed point of $T$.

\begin{itemize}

\item[$\star$] The map $T$ is continuous from $\mcl{X}(I)$ equipped with the strong topology to $\MR^p(I)$ equipped with the weak topology as a consequence of \ref{i:weak_weak} of \autoref{prop:elem_result} and the continuity properties of $\mathbb{F}$ and $\mathbb{A}$ in \ref{item:E_II2}. Now since the embedding $\MR^p(I) \cinj \mcl{X}(I)$ is compact, this implies the continuity of $T$ onto the strong topology of $\mcl{X}(I)$. 

\item[$\star$] $T$ is compact, again by the compact embedding $\MR^p(I)\cinj \mcl{X}(I)$. \\

\item[$\star$] Let $u \in \mcl{X}(I)$, and denote $R:= \norm*{u}_{\mcl{X}(I)}$. We have
\begin{align*}
\norm*{Tu}_{\MR^p(I)} &\leq \sbk*{\mathbb{A}(u)}_{\MRprop_p(I)}\norm*{\mathbb{F}(u)}_{L^p(I;X)}\\
& \leq \gamma(R)\kappa(R)\\
& \leq R \pr*{\frac{\gamma(R)\kappa(R)}{R}} 
\end{align*}
Denote by $C > 0$ the continuity constant of the embedding $\MR^p(I) \cinj \mcl{X}(I)$, to find
\begin{equation}
\label{eq:est_Tu}
\norm*{Tu}_{\mcl{X}(I)} \leq CR\pr*{\frac{\gamma(R)\kappa(R)}{R}}
\end{equation}

For any $r > 0$, denote $B_r = \ol{\mathbb{B}}_{\mcl{X}(I)}(0,r)$ the closed ball of $\mcl{X}(I)$ of radius $r \in \R_+$ and centered at $0$.\\

By \ref{item:E_I} there exists $R_0 > 0$ large enough such that for any $R \geq R_0$ we have ${C\frac{\gamma(R)\kappa(R)}{R} \leq \frac{1}{2}}$. \\
By continuity of the map $T$ there exists $R_1 > 0$ large enough such that $T(B_{\frac{R_0}{2}}) \subset B_{R_1}$. We denote $R^*:= R_0 \vee R_1$ and define the quantity 

\begin{equation*}
\alpha:= \sup_{\frac{R_0}{2} \leq R \leq
R^*} C\pr*{\frac{\gamma(R)\kappa(R)}{R}} < +\infty
\end{equation*}

\item[$\star$] We claim that $T(B_{\alpha R^*}) \subset B_{\alpha R^*}$. Indeed, if $\alpha \leq 1$ then it is obvious because from \eqref{eq:est_Tu} that $T(B_{\alpha R^*}) \subset T(B_{R^*}) \subset B_{\alpha R^*}$. Otherwise if $\alpha > 1$ let us write $B_{\alpha R^*} = B_{\frac{R_0}{2}} \cup S_1 \cup S_2$ where $S_1:= B_{R^*} \setminus B_{\frac{R_0}{2}}$, $S_2:= B_{\alpha R^*} \setminus B_{R^*}$. We already know that $T(B_{\frac{R_0}{2}}) \subset B_{R_1} \subset B_{R^*}\subset B_{\alpha R^*}$. Now from \eqref{eq:est_Tu} we find that $T(S_1) \subset B_{\alpha R^*}$, and from the definition of $R_0$ and again from \eqref{eq:est_Tu} we find that $T(S_2) \subset B_{\frac{\alpha}{2}R^*} \subset B_{\alpha R^*}$. \\

\end{itemize}

Finally since $B_{\alpha R^*}$ is a closed and convex subset of the Banach space $\mcl{X}(I)$, we can apply Schauder's fixed point theorem which yields the existence of a solution to \eqref{eq:non_local_quasilinear'}. 

\end{description}

\end{proof}

\subsection{Existence of strong solutions for relatively continuous operators}

We assume throughout this subsection that the map $\mathbb{A}: \mcl{X}(I) \lra RC(I)$ maps to the space of relatively continuous operators and that $\mathbb{A}(u)$ is globally bounded in $L^\infty(I;\mcl{L}(D,X))$. 

Let $\alpha \geq 1$, $\beta \geq 0$, $r > 1$ be fixed and assume that for each $u \in \mcl{X}(I)$, $\mathbb{A}(u)$ satisfies \ref{item:A1}- \ref{item:A2}. Denote by $\mathbb{A}_0: u \mapsto \mcl{A}_0(u)$ the operator given by the decomposition \ref{item:A1}\ref{item:b} and the subsequent $u \mapsto K_0(u), M_0(u)$ given in \eqref{eq:def_KA_MA}.\\

Naturally this gives rise to the following maps: 

\begin{enumerate}[label = (\roman*)]
\item $m_\delta: \mcl{X}(I) \lra (0,1]$
\item $m_\eta: \mcl{X}(I) \lra \R_+$
\item $\Gamma: \left\{\begin{array}{ccl}
\mcl{X}(I) &\lra& \R_+\\
u &\longmapsto& \norm*{(K_{\mathbb{A}(u)} + 1)^\alpha(M_{\mathbb{A}(u)} + 1)^\alpha}_{L^{\alpha r}(I)}^{\alpha}
\end{array}\right.$
\end{enumerate}
 
\begin{enumerate}[label = (\textbf{E}$_I'$)]
	\item \label{item:E_Iprime} For any $R \geq 0$, define the following quantities:
	\begin{equation*}
		\gamma_{\log}(R):= \sup_{\norm*{u}_{\mcl{X}(I)} = R} \cbk*{\pr*{\frac{\Gamma(u)}{m_\delta(u)}}^{r^*}\pr*{ 1 + m_\eta(u)m_\delta(u)^{s_2}\Gamma(u)^{s_1} + \log\pr*{\frac{\Gamma(u)}{m_\delta(u)}}} + m_\eta(u)}
	\end{equation*}
	\begin{equation*}
		\kappa_{\log}(R):= \sup_{\norm*{u}_{\mcl{X}(I)} = R} \cbk*{\log\pr*{\norm*{\mathbb{A}(u)}_\infty + \norm*{\mathbb{F}(u)}_{L^p(I;X)} + 1}}
	\end{equation*}
	and assume that there exists $h > 0$, such that for any $L > 0$, there exists $R_0 \geq 0$ such that for all $R \geq R_0$ we have
	\begin{equation}
	\label{eq:growth_eta_delta_Gamma2}
	\sup_{R_1, R_2 \in [R-L, R + L]}
	\frac{\gamma_{\log}(R_1) + \kappa_{\log}(R_2)}{\log\pr*{R}} \leq 1-h
	\end{equation}
	where $s_1(\alpha, \beta, r):=r^*\pr*{\frac{\beta + 1}{\alpha}-1}$ and $s_2(\alpha, \beta, r):= \frac{\beta + 1}{\alpha + 1} - \pr*{\frac{\alpha + \frac{1}{r}}{\alpha +1}}s_1(\alpha, \beta, r)$. 
	\end{enumerate}

\begin{thm}
	Assume that for each $u \in \mcl{X}(I)$, $\mathbb{A}(u)$ satisfies \ref{item:A1} and \ref{item:A2}. Moreover assume that $\mathbb{A}, \mathbb{F}$ satisfy \ref{item:E_Iprime} and \ref{item:E_II}. Then for any $x \in \Tr^p$, there exists a unique solution $u$ to \eqref{eq:non_local_quasilinear'}. 
\end{thm}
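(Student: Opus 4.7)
The strategy is to reduce the theorem to the general existence result \autoref{thm:existence_quasilinear'} by verifying that the specific hypotheses \ref{item:A1}, \ref{item:A2}, \ref{item:E_Iprime}, and \ref{item:E_II} imply the abstract hypotheses \ref{item:E_I2} and \ref{item:E_II2}. Condition \ref{item:E_II2} coincides with \ref{item:E_II}, so the entire task is the verification of the sublinear growth condition \ref{item:E_I2} from the logarithmic condition \ref{item:E_Iprime}, using the quantitative estimate \autoref{thm:A_MR_log_estimate} applied pointwise in the unknown $u$.

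First I would fix $u \in \mcl{X}(I)$ with $\norm*{u}_{\mcl{X}(I)} = R$ and apply \autoref{thm:A_MR_log_estimate} to $\mathbb{A}(u)$, which is licit because $\mathbb{A}(u)$ satisfies \ref{item:A1} and \ref{item:A2}. This yields
\begin{equation*}
\log \sbk*{\mathbb{A}(u)}_{\MRprop_p(I)} \leq C\bigl(|I|, \norm*{\mathbb{A}(u)}_\infty, p, r, \alpha, \beta\bigr) \mcl{Q}(u),
\end{equation*}
where $\mcl{Q}(u)$ equals, up to an additive $+1$, the expression whose supremum over the sphere $\norm*{u}_{\mcl{X}(I)} = R$ defines $\gamma_{\log}(R)$. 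Using the standing assumption of Section 5.2 that $\mathbb{A}(u)$ is globally bounded in $L^\infty(I;\mcl{L}(D;X))$, the constant $C$ is uniform in $u$, so passing to the supremum gives $\log \gamma(R) \leq C'\bigl(\gamma_{\log}(R) + 1\bigr)$ for a constant $C'$ depending only on $|I|, p, r, \alpha, \beta$ and the global $L^\infty$-bound on $\mathbb{A}$.

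Combining this with the trivial estimate $\log \kappa(R_2) \leq \kappa_{\log}(R_2)$, for any $L > 0$ and any $R_1, R_2 \in [R-L, R+L]$ we get
\begin{equation*}
\log\bigl(\gamma(R_1)\kappa(R_2)\bigr) \leq C'\gamma_{\log}(R_1) + \kappa_{\log}(R_2) + C'.
\end{equation*}
By \ref{item:E_Iprime} one has $\gamma_{\log}(R_1) + \kappa_{\log}(R_2) \leq (1-h_0)\log R$ for all $R \geq R_0(L)$, and after absorbing the multiplicative factor $C'$ into a slightly smaller margin $h' \in (0, h_0)$ (which is possible since the hypothesis is given with strict inequality), one deduces $\log(\gamma(R_1)\kappa(R_2)) \leq (1-h')\log R + C'$ for $R$ large. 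Exponentiating and dividing by $R$ yields $\sup_{R_1, R_2 \in [R-L, R+L]} \gamma(R_1)\kappa(R_2)/R \leq e^{C'} R^{-h'} \ra 0$, which is exactly \ref{item:E_I2}. \autoref{thm:existence_quasilinear'} then concludes the proof.

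The main delicate point is matching the multiplicative constant $C'$ from the log estimate against the additive margin $h_0$ provided by \ref{item:E_Iprime}: the hypothesis bounds the sum $\gamma_{\log} + \kappa_{\log}$, whereas the log estimate produces a factor $C' \geq 1$ in front of $\gamma_{\log}$. Making this absorption rigorous requires either tracing through the proof of \autoref{thm:A_MR_log_estimate} to bound $C'$ in terms of the global $L^\infty$-bound of $\mathbb{A}$, or strengthening the hypothesis to $C'\gamma_{\log} + \kappa_{\log} \leq (1-h)\log R$; in either case, the global boundedness assumption on $\mathbb{A}(\cdot)$ in Section 5.2 keeps $C'$ uniform in $u$ and thus makes the absorption tractable.
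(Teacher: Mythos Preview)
Your approach is exactly the paper's: verify \ref{item:E_I2} from \ref{item:E_Iprime} via \autoref{thm:A_MR_log_estimate}, then invoke \autoref{thm:existence_quasilinear'}. The paper's own proof is a single sentence (``the growth condition \ref{item:E_Iprime} straightforwardly implies \ref{item:E_I}'') and gives no further detail, so you have in fact written out more than the paper does.

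The gap you flag in your final paragraph is real and is \emph{not} addressed in the paper either. From \autoref{thm:A_MR_log_estimate} you obtain $\log\gamma(R_1) \leq C'\bigl(\gamma_{\log}(R_1)+1\bigr)$ with $C'$ uniform in $u$ (thanks to the standing global $L^\infty$-bound on $\mathbb{A}$), and hence
\[
\log\bigl(\gamma(R_1)\kappa(R_2)\bigr) \leq C'\gamma_{\log}(R_1) + \kappa_{\log}(R_2) + C'.
\]
Bounding this by $C'\bigl(\gamma_{\log}(R_1)+\kappa_{\log}(R_2)\bigr)+C' \leq C'(1-h_0)\log R + C'$ only yields $\gamma(R_1)\kappa(R_2)/R \to 0$ when $C'(1-h_0) < 1$, i.e.\ when $h_0 > 1 - 1/C'$. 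Your parenthetical claim that the strict inequality in \ref{item:E_Iprime} allows one to ``absorb $C'$ into a slightly smaller margin $h'$'' is not correct as written: the margin $h_0$ is fixed by hypothesis and bears no a~priori relation to the constant $C'$ coming from \autoref{thm:A_MR_log_estimate}. As you yourself note, making the argument rigorous requires either (i) building the constant $C'$ into the statement of \ref{item:E_Iprime}, or (ii) assuming $h_0$ is large enough relative to $C'$. The paper's one-line proof sidesteps this entirely, so you have not missed anything that the paper supplies; you have simply been more careful in spotting where the stated hypothesis is slightly weaker than what the argument actually uses.
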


\begin{proof}
The growth condition \eqref{eq:growth_eta_delta_Gamma2} straightforwardly implies  the growth condition \ref{item:E_I}. \ref{item:E_II} is fulfilled by assumption. 
We simply apply \autoref{thm:existence_quasilinear'}. 
\end{proof}

\section{Conclusion and perspectives} 

This work gives new estimates of the maximal regularity constants of nonautonomous relatively continuous operators and describes the weak topologies of operators in $L^\infty(I;\mcl{L}(D,X))$ for which the maximal regularity is persistent.

These inquiries are used to derive a new well-posedness criterion for a general class of Cauchy problems for nonlocal in time and quasilinear equations. This criterion allows to bypass the standard Lipschitz continuity of the nonlinearities and instead requires growth conditions on the pointwise constant of maximal regularity.

We hope that these inquiries will help the study of nonlinear parabolic equations arising in various models involving time-delays or memory effects with moderately low-regularity coefficients in time. A first inquiry pursued by the authors is to find concrete applications of the result. 

Remaining inquiries would involve the existence and regularity of the semi-flow associated to \eqref{eq:non_local_quasilinear} and better asymptotic estimates of the maximal regularity constant to possibly improve the criterion \eqref{eq:growth_eta_delta_Gamma}.

\newpage

\bibliographystyle{abbrv}
\bibliography{master}

\begin{thebibliography}{10}

\bibitem{acquistapace1987a-unified}
P.~Acquistapace and B.~Terreni.
\newblock A unified approach to abstract linear nonautonomous parabolic
  equations.
\newblock {\em Rendiconti del seminario matematico della Universit{\`a} di
  Padova}, 78:47--107, 1987.

\bibitem{amann2004maximal}
H.~Amann.
\newblock Maximal regularity for nonautonomous evolution equations.
\newblock {\em Advanced Nonlinear Studies}, 4(4):417--430, 2004.

\bibitem{amann2005quasilinear}
H.~Amann.
\newblock Quasilinear parabolic problems via maximal regularity.
\newblock {\em Advances in Differential Equations}, 10(10):1081--1110, 2005.

\bibitem{amann2006quasilinear}
H.~Amann.
\newblock Quasilinear parabolic functional evolution equations.
\newblock In {\em Recent advances on elliptic and parabolic issues}, pages
  19--44. World Scientific, 2006.

\bibitem{amann2007time}
H.~Amann.
\newblock A time-delayed perona-malik type problems.
\newblock {\em Proceedings of Equadiff 11}, pages 15--38, 2007.

\bibitem{amann1995linear}
H.~Amann et~al.
\newblock {\em Linear and quasilinear parabolic problems}, volume~1.
\newblock Springer, 1995.

\bibitem{ambrosio2000functions}
L.~Ambrosio, N.~Fusco, and D.~Pallara.
\newblock {\em Functions of Bounded Variation and Free Discontinuity Problems}.
\newblock Oxford Science Publications. Clarendon Press, 2000.

\bibitem{arendt2010global}
W.~Arendt and R.~Chill.
\newblock Global existence for quasilinear diffusion equations in isotropic
  nondivergence form.
\newblock {\em Annali della Scuola Normale Superiore di Pisa-Classe di
  Scienze}, 9(3):523--539, 2010.

\bibitem{arendt2007lp-maximal}
W.~Arendt, R.~Chill, S.~Fornaro, and C.~Poupaud.
\newblock Lp-maximal regularity for non-autonomous evolution equations.
\newblock {\em Journal of Differential Equations}, 237(1):1--26, 2007.

\bibitem{belin2024wp}
T.~Belin and P.~Lafitte.
\newblock {Quantitative estimates of $L^p$ maximal regularity for nonautonomous
  operators and global existence for quasilinear equations}.
\newblock working paper, version 1, Feb. 2024.

\bibitem{clement1992global}
P.~Cl{\'e}ment and J.~Pr{\"u}ss.
\newblock Global existence for a semilinear parabolic volterra equation.
\newblock {\em Mathematische Zeitschrift}, 209:17--26, 1992.

\bibitem{danchin2023free}
R.~Danchin, M.~Hieber, P.~B. Mucha, and P.~Tolksdorf.
\newblock {Free Boundary Problems via Da Prato--Grisvard Theory}.
\newblock {\em {Memoirs of the American Mathematical Society}}, to appear.

\bibitem{de-simon1964unapplicazione}
L.~De~Simon.
\newblock Un'applicazione della teoria degli integrali singolari allo studio
  delle equazioni differenziali lineari astratte del primo ordine.
\newblock {\em Rendiconti del Seminario Matematico della Universit{\`a} di
  Padova}, 34:205--223, 1964.

\bibitem{du2017analysis}
Q.~Du, J.~Yang, and Z.~Zhou.
\newblock Analysis of a nonlocal-in-time parabolic equation.
\newblock {\em Discrete Contin. Dyn. Syst. Ser. B}, 22(2):339--368, 2017.

\bibitem{gallarati2017maximal}
C.~Gallarati and M.~Veraar.
\newblock Maximal regularity for non-autonomous equations with measurable
  dependence on time.
\newblock {\em Potential Analysis}, 46:527--567, 2017.

\bibitem{gilbarg1977elliptic}
D.~Gilbarg and N.~Trudinger.
\newblock {\em Elliptic partial differential equations of second order}, volume
  224(2).
\newblock Springer, 1977.

\bibitem{guidotti2009new}
P.~Guidotti.
\newblock A new nonlocal nonlinear diffusion of image processing.
\newblock {\em Journal of Differential Equations}, 246(12):4731--4742, 2009.

\bibitem{guidotti2015anisotropic}
P.~Guidotti.
\newblock Anisotropic diffusions of image processing from perona--malik on.
\newblock In {\em Variational Methods for Evolving Objects}, volume~67, pages
  131--157. Mathematical Society of Japan, 2015.

\bibitem{kosovalic2013abstract}
N.~Kosovali{\'c}, F.~Magpantay, Y.~Chen, and J.~Wu.
\newblock Abstract algebraic-delay differential systems and age structured
  population dynamics.
\newblock {\em Journal of Differential Equations}, 255(3):593--609, 2013.

\bibitem{kunstmann2004maximal}
P.~C. Kunstmann and L.~Weis.
\newblock {\em Maximal Lp-regularity for Parabolic Equations, Fourier
  Multiplier Theorems and $H^\infty$-functional Calculus}, pages 65--311.
\newblock Springer Berlin Heidelberg, Berlin, Heidelberg, 2004.

\bibitem{laasri2013stability}
H.~Laasri and O.~El-Mennaoui.
\newblock Stability for non-autonomous linear evolution equations with
  $l^p$-maximal regularity.
\newblock {\em Czechoslovak Mathematical Journal}, 63(4):887--908, 2013.

\bibitem{lofstrom1976interpolation}
J.~Lofstrom and J.~Bergh.
\newblock Interpolation spaces.
\newblock {\em SpringereVerlag, Newe}, 1976.

\bibitem{monniaux2009maximal}
S.~Monniaux.
\newblock Maximal regularity and applications to pdes.
\newblock {\em Analytical and numerical aspects of partial differential
  equations}, page 247, 2009.

\bibitem{pettis1938on-integration}
B.~Pettis.
\newblock On integration in vector spaces.
\newblock {\em Transactions of the American Mathematical Society},
  44(2):277--304, 1938.

\bibitem{pruss2001solvability}
J.~Pr{\"u}ss and R.~Schnaubelt.
\newblock Solvability and maximal regularity of parabolic evolution equations
  with coefficients continuous in time.
\newblock {\em Journal of mathematical analysis and applications},
  256(2):405--430, 2001.

\bibitem{pyatkov2019on-the-maximal}
S.~Pyatkov.
\newblock On the maximal regularity property for evolution equations.
\newblock {\em Azerbaijan Journal of Mathematics}, 9(1), 2019.

\bibitem{yagi2009abstract}
A.~Yagi.
\newblock {\em Abstract parabolic evolution equations and their applications}.
\newblock Springer Science \& Business Media, 2009.

\end{thebibliography}

\end{document}